\newtheorem{theorem}{Theorem}
\theoremstyle{plain}
\newtheorem{claim}[theorem]{Claim}
\newtheorem{corollary}[theorem]{Corollary}
\newtheorem{definition}[theorem]{Definition}
\newtheorem{fact}[theorem]{Fact}
\newtheorem{lemma}[theorem]{Lemma}
\newtheorem{proposition}[theorem]{Proposition}
\newtheorem{remark}[theorem]{Remark}
\numberwithin{equation}{section}
\numberwithin{theorem}{section}
\numberwithin{case}{section}
\numberwithin{subcase}{case}
\def\ba{\backslash}
\def\A{\mathcal{A}}
\def\D{\mathcal{D}}
\def\F{\mathcal{F}}
\def\G{\mathcal{G}}
\def\h{\mathcal{H}}
\def\K{\mathcal{K}}
\def\M{\mathcal{M}}
\def\T{\mathcal{T}}
\def\Q{\mathcal{Q}}
\def\Y{\mathcal{Y}}
\def \a{\alpha}
\def \e{\epsilon}
\def \r{\gamma}
\begin{document}
\title[vertex degree threshold for loose Hamilton cycles in 3-graphs]{Minimum vertex degree threshold for loose Hamilton cycles in 3-uniform hypergraphs}
\thanks{The second author is partially supported by NSA grant H98230-12-1-0283 and NSF grant DMS-1400073.}
\author{Jie Han}
\address[Jie Han and Yi Zhao]
{Department of Mathematics and Statistics,\newline
\indent Georgia State University, Atlanta, GA 30303}
\email[Jie Han]{jhan22@gsu.edu}
\author{Yi Zhao}
\email[Yi Zhao]{\texttt{yzhao6@gsu.edu}}%
\date{\today}
\subjclass{Primary 05C65, 05C45}%
\keywords{Hamilton cycle, hypergraph, absorbing method, regularity lemma}%
\begin{abstract}
We show that for sufficiently large $n$, every 3-uniform hypergraph on $n$ vertices with minimum vertex degree at least $\binom{n-1}2 - \binom{\lfloor\frac34 n\rfloor}2 + c$, where $c=2$ if $n\in 4\mathbb N$ and $c=1$ if $n\in 2\mathbb{N}\setminus 4\mathbb N$, contains a loose Hamilton cycle. This degree condition is best possible and improves on the work of Bu\ss, H\`an and Schacht who proved the corresponding asymptotical result.
\end{abstract}

\maketitle

\section{Introduction}
The study of Hamilton cycles is an important topic in graph theory. In recent years, researchers have worked on extending the classical theorem of Dirac \cite{dirac} on Hamilton cycles to hypergraphs -- see recent surveys of \cite{RR, KuOs14ICM}.

Given $k\ge 2$, a $k$-uniform hypergraph (in short, \emph{$k$-graph}) consists of a vertex set $V$ and an edge set $E\subseteq \binom{V}{k}$, where every edge is a $k$-element subset of $V$. For $1\le l< k$, a $k$-graph is called an \emph{$l$-cycle} if its vertices can be ordered cyclically such that each of its edges consists of $k$ consecutive vertices and every two consecutive edges (in the natural order of the edges) share exactly $l$ vertices. 
(If we allow $l=0$, then a $0$-cycle is merely a matching and perfect matchings have been intensively studied recently, e.g. \cite{AFHRRS, CzKa, HPS, Khan1, Khan2, KOT, RRS09, TrZh12, TrZh13})
In $k$-graphs, a $(k-1)$-cycle is often called a \emph{tight} cycle while a $1$-cycle is often called a \emph{loose} cycle. We say that a $k$-graph contains a \emph{Hamilton $l$-cycle} if it contains an $l$-cycle as a spanning subhypergraph. Note that a Hamilton $l$-cycle of a $k$-graph on $n$ vertices contains exactly $n/(k - l)$ edges, implying that $k- l$ divides $n$.

Given a $k$-graph $\h$ with a set $S$ of $d$ vertices (where $1 \le d \le k-1$) we define $\deg_{\h} (S)$ to be the number of edges containing $S$ (the subscript $\h$ is omitted if it is clear from the context). The \emph{minimum $d$-degree $\delta _{d} (\h)$} of $\h$ is the minimum of $\deg_{\h} (S)$ over all $d$-vertex sets $S$ in $\h$.  We refer to  $\delta _1 (\h)$ as the \emph{minimum vertex degree} and  $\delta _{k-1} (\h)$ the \emph{minimum codegree} of $\h$.

\subsection{Hamilton cycles in hypergraphs}
Confirming a conjecture of Katona and Kierstead \cite{KK}, R\"odl, Ruci\'nski and Szemer\'edi \cite{RRS06, RRS08} showed that for any fixed $k$, every $k$-graph $\h$ on $n$ vertices with $\delta_{k-1}(\h)\ge n/2 + o(n)$ contains a tight Hamilton cycle. This is best possible up to the $o(n)$ term. With long and involved arguments, the same authors \cite{RRS11} improved this to an exact result for $k=3$.
Loose Hamilton cycles were first studied by K\"uhn and Osthus \cite{KO}, who proved that
every 3-graph on $n$ vertices with $\delta_2(\h)\ge n/4 + o(n)$ contains a loose Hamilton cycle. 
Czygrinow and Molla \cite{CzMo} recently improved this to an exact result. 
The result of K\"uhn and Osthus \cite{KO} was generalized for arbitrary $k$ and arbitrary $l< k/2$ by H\`an and Schacht \cite{HS}, and independently by Keevash et al. \cite{KKMO} for $l=1$ and arbitrary $k$. Later K\"uhn, Mycroft, and Osthus \cite{KMO} obtained an asymptotically sharp bound on codegree for Hamilton $l$-cycles for all $l<k$. Hence, the problem of finding Hamilton $l$-cycles in $k$-graphs with large codegree is asymptotically solved.

Much less is known under other degree conditions. Recently R\"odl and Ruci\'nski \cite{RoRu14} gave a sufficient vertex degree condition that guarantees a tight Hamilton cycle in $3$-graphs. Glebov, Person and Weps \cite{GPW} gave a nontrivial vertex degree condition for tight Hamilton cycles in $k$-graphs for general $k$. Neither of these results is 
best possible -- see more discussion in Section~\ref{sec:CR}. 

Recently Bu\ss, H\`an, and Schacht \cite{BHS} studied the minimum vertex degree that guarantees a loose Hamilton cycle in 3-graphs  and obtained the following result.

\begin{theorem}\cite[Theorem 3]{BHS}\label{thmBHS}
For all $\gamma>0$ there exists an integer $n_0$ such that the following holds. Suppose $\h$ is a 3-graph on $n>n_0$ with $n\in 2\mathbb{N}$ and
\[
\delta_1(\h)>\left(\frac7{16}+\gamma \right)\binom n2.
\]
Then $\h$ contains a loose Hamilton cycle.
\end{theorem}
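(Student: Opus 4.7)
The plan is to deploy the absorbing method of R\"odl--Ruci\'nski--Szemer\'edi, splitting the argument into three modular pieces: an \emph{Absorbing Lemma}, a \emph{Reservoir Lemma}, and an \emph{Almost Covering Lemma}. Granting these, the argument proceeds in the usual way: first apply the Absorbing Lemma to produce a short loose path $P_A$ of length $O(\gamma n)$ with the property that for every sufficiently small set $U\subseteq V(\h)\setminus V(P_A)$ of the appropriate parity, $V(P_A)\cup U$ spans a loose path in $\h$ with the same two endpoints as $P_A$. Next, a simple probabilistic selection produces a random reservoir $R$ disjoint from $V(P_A)$ with the property that every pair of vertices has $\Theta(|R|)$ common ``link-neighbours'' in $R$, which will later serve as connectors. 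Then apply the Almost Covering Lemma to $\h':=\h[V(\h)\setminus(V(P_A)\cup R)]$ to obtain a loose path $P_C$ covering all but $o(n)$ vertices of $\h'$, using $R$-vertices to splice together shorter subpaths. Finally, splice $P_A$ and $P_C$ into a single long loose path through $R$ and absorb the remaining $o(n)$ leftover vertices via $P_A$, closing up a loose Hamilton cycle.

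The Absorbing Lemma is the combinatorial heart of the argument. I would show that for every pair $\{u,v\}\subseteq V(\h)$ there are $\Omega(n^{s})$ absorbers for some constant $s$, where an absorber is a small subhypergraph $W\subseteq\h$ such that both $V(W)$ and $V(W)\cup\{u,v\}$ span loose paths in $\h$ sharing the same pair of endpoints. (Pairs rather than single vertices are needed because every loose-path edge contributes two vertices beyond the previous endpoint, forcing an even parity on the absorbed set.) The hypothesis $\delta_1(\h)\ge(7/16+\gamma)\binom{n}{2}$ drives a link-graph counting argument guaranteeing sufficiently many such configurations per pair. A standard random selection followed by alteration then assembles $\Theta(\log n)$ vertex-disjoint absorbers per pair into a single short loose path $P_A$, whose absorbing power extends to arbitrary small leftover sets.

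For the Almost Covering Lemma I would invoke the weak regularity lemma for $3$-graphs to partition $V(\h')$ into clusters $V_1,\dots,V_t$, producing a reduced $3$-graph $\mathcal R$ on $[t]$ whose edges are $(\eps,d)$-regular triples of density at least $d\gg\eps$. The minimum vertex degree is inherited as $\delta_1(\mathcal R)\ge(7/16+\gamma/2)\binom{t}{2}$. The goal is then to find in $\mathcal R$ a near-spanning structure -- for instance, a fractional perfect matching with small defect, or a collection of edges covering each $V_i$ with bounded multiplicity -- which can be realised inside the regular triples as long loose subpaths via a greedy counting argument. A Connecting Lemma derived from the degree condition together with the reservoir $R$ then splices these subpaths into the single loose path $P_C$.

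The hardest step I anticipate is arranging a near-spanning structure in $\mathcal R$ strong enough to leave only $o(n)$ vertices uncovered while relying solely on vertex-degree (rather than codegree) information, and simultaneously matching the sharp constant $7/16$. That constant arises from an extremal construction: partition $V(\h)$ into $A\sqcup B$ with $|A|\approx n/4$ and $B$ independent; since every $A$-vertex lies in at most two edges of any loose cycle, at most $2|A|=n/2$ edges of a Hamilton loose cycle can meet $A$, while all $n/2$ edges must meet $A$ because $B$ is edge-free -- leaving exactly zero slack. Translating this tightness through regularity into the reduced graph $\mathcal R$, and arranging the absorber count so that it is likewise governed by exactly the $7/16$ bound, is where the delicate combinatorial effort will be concentrated.
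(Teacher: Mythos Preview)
Your outline is correct and matches the absorbing-method framework used both in \cite{BHS} and in this paper's strengthening; note that Theorem~\ref{thmBHS} is quoted from \cite{BHS} rather than re-proved here, and the present paper establishes the exact threshold via the same three-step scheme (Lemmas~\ref{lemA}, \ref{lemR}, \ref{lemP}).

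Two points to sharpen. First, the number of absorbers per pair surviving the random selection must be polynomial in $n$ (of order $\gamma^{c}n$ for some constant $c$), not $\Theta(\log n)$: logarithmically many would not suffice to greedily absorb a leftover set of linear size. Second, and more substantively, your almost-covering step leaves the ``near-spanning structure in $\mathcal R$'' unspecified, and this is exactly where all the work sits. In \cite{BHS} the reduced $3$-graph is tiled fractionally by the $8$-vertex gadget $\M=\{123,345,456,678\}$, with a second pass of regularity to round the fractional tiling to an integer one; the present paper replaces this by a direct integer tiling with the $4$-vertex $3$-graph $\Y=\{123,234\}$ (Lemma~\ref{lem:Y}), which is both simpler and strong enough to push the hypothesis below $7/16$. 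A bare ``fractional perfect matching'' in $\mathcal R$ will not do, since a single regular triple of three equal clusters does not host a near-spanning loose path; one genuinely needs a tiling by a fixed $3$-graph whose part sizes match the $1:1:2$ profile of a long loose path, and proving such a tiling exists under only a vertex-degree hypothesis is precisely where the constant $7/16$ is earned.
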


In this paper we improve Theorem~\ref{thmBHS} as follows.

\begin{theorem}[Main Result] \label{thmmain}
There exists an $n_{\ref{thmmain}}\in \mathbb N$ such that the following holds. Suppose that $\h$ is a 3-graph on $n>n_{\ref{thmmain}}$ with $n\in 2\mathbb{N}$ and
\begin{equation}
\delta_1(\h)\ge \binom{n-1}2 - \binom{\lfloor\frac34 n\rfloor}2 + c,\label{eqdeg}
\end{equation}
where $c=2$ if $n\in 4\mathbb N$ and $c=1$ otherwise.
Then $\h$ contains a loose Hamilton cycle.
\end{theorem}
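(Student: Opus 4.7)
The plan is to use the stability/extremal-case dichotomy that has become standard for exact Dirac-type results. Call $\h$ \emph{$\beta$-extremal} if there is a set $B\subseteq V(\h)$ with $|B|=\lfloor 3n/4\rfloor$ inducing at most $\beta n^3$ edges of $\h$; the name comes from the tight construction, where $V=A\cup B$ with $|A|=\lceil n/4\rceil-1$ and $E(\h)$ is the set of all triples meeting $A$ (this hypergraph has no loose Hamilton cycle because covering all $n/2$ edges requires at least $n/4$ vertices of $A$). I would fix a sufficiently small $\beta>0$ and split the proof into a \emph{non-extremal case} (no such $B$ exists) and an \emph{extremal case} (some such $B$ does exist), treating each separately.

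In the non-extremal case the degree condition sits above the asymptotic threshold $(\tfrac{7}{16}+o(1))\binom{n}{2}$ of Bu\ss--H\`an--Schacht, so I would aim to adapt the absorbing-method proof of Theorem~\ref{thmBHS}. The three ingredients are: (i) an \emph{absorbing path} $P_{\mathrm{abs}}$ that remains a loose path after any sufficiently small vertex set $U\subseteq V\setminus V(P_{\mathrm{abs}})$ is inserted, constructed by showing that every vertex has $\Omega(n^4)$ short absorbing structures and then picking a random collection; (ii) a small \emph{reservoir} $R$ that can link any pair of ordered edges through a short loose path while avoiding any prescribed small set; (iii) an almost-covering \emph{path-tiling} lemma leaving only $o(n)$ uncovered vertices. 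I would obtain (iii) via the weak regularity lemma combined with the non-extremal hypothesis, which prevents large nearly-empty blocks from sitting inside any candidate set $B$. The cycle is then assembled in the standard way: reserve $R$, build $P_{\mathrm{abs}}$, cover almost everything by $O(1)$ loose paths avoiding $R\cup V(P_{\mathrm{abs}})$, link these and $P_{\mathrm{abs}}$ into a cycle through $R$, and finally absorb the uncovered vertices into $P_{\mathrm{abs}}$.

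In the extremal case a set $B$ of size $\lfloor 3n/4\rfloor$ with few internal edges is given; write $A=V\setminus B$, so $|A|\approx n/4$. The structural picture is clear: almost every edge meets $A$, while in any loose Hamilton cycle the $n/2$ shared (degree-$2$) vertices must carry essentially every edge, forcing them to lie mostly in $A$. I would first clean the partition by reallocating the $o(n)$ atypical vertices, using the degree condition in \eqref{eqdeg} to decide whether they belong to $A$ or to $B$, and then greedily construct the cycle by using each vertex of $A$ as the shared vertex of two consecutive edges, with the remaining shared vertices and all the unshared vertices drawn from $B$. Parity constraints and the exact sizes of $A$ and $B$ (which is why the shared vertices of $A$ and $B$ must roughly alternate around the cycle, and hence why $4\mid n$ vs.\ $n\equiv 2\pmod 4$ matters) force a small, controlled number of exceptional edges with two vertices in $A$, and this is precisely where the $+c$ slack in \eqref{eqdeg} is consumed.

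I expect the extremal case to be the main obstacle, and within it the task of pinning down the sharp constant $c$. A bound of $\binom{n-1}{2}-\binom{\lfloor 3n/4\rfloor}{2}+O(1)$ should come out of a moderately direct greedy argument once the partition $A\cup B$ is cleaned, but distinguishing $c=2$ (when $4\mid n$) from $c=1$ (when $n\equiv 2\pmod 4$) requires tracking parities both of $|A|$ and of the number of shared vertices that must be placed into $B$, together with verifying that every near-extremal hypergraph meeting \eqref{eqdeg} really does admit a Hamilton cycle. The non-extremal case should be comparatively routine once the absorbing, reservoir, and path-tiling lemmas are set up under the stability hypothesis, the main care being that the tiling outputs only $O(1)$ pieces so that $R$ has the capacity to connect them all.
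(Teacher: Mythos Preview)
Your high-level plan---extremal/non-extremal dichotomy, with the non-extremal case handled by the absorbing path, reservoir, and regularity-based path-tiling lemmas, and the extremal case by cleaning the partition $A\cup B$ and then building the cycle explicitly---is exactly the paper's approach.

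One point to correct: the $+c$ in \eqref{eqdeg} is \emph{not} spent on exceptional edges with two vertices in $A$. In the paper the Hamilton cycle in the extremal case uses no such edges. The slack is consumed in the opposite direction: after cleaning, some vertices of $A$ may have atypically low degree into $B$ and get reassigned to the $B$-side, producing a set $B'$ with $|B'|>\lfloor 3n/4\rfloor$. To restore the $1:3$ ratio along the cycle one must then use a few edges lying \emph{entirely inside} $B'$, and the degree bound \eqref{eqdeg} is exactly what guarantees their existence (see Claim~\ref{clm:Bpath} and the remark immediately after it). The distinction between $c=1$ and $c=2$ arises here, from whether a single edge or a loose path of length two inside $B'$ is needed, not from parity of $|A|$.
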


The following construction shows that Theorem \ref{thmmain} is best possible. It is slightly stronger than \cite[Fact 4]{BHS}.

\begin{proposition}\label{factcounter}
For every $n\in 2\mathbb{N}$ there exists a 3-graph on $n$ vertices with minimum vertex degree $\binom{n-1}2 - \binom{\lfloor\frac34 n\rfloor}2 + c - 1$, where $c$ is defined as in Theorem~\ref{thmmain},
and which contains no loose Hamilton cycle.
\end{proposition}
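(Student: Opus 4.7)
The plan is to give an explicit construction, separately for the two congruence classes of $n$ modulo $4$. The baseline is a bipartition $V=A\cup B$ with $|A|$ just below $n/4$, taking $\h_0$ to consist of all triples that meet $A$. The key structural fact, used throughout, is that a loose Hamilton cycle on $n$ vertices has exactly $n/2$ edges and each vertex lies in at most $2$ of them; hence $A$ can meet at most $2|A|$ cycle edges, and once $2|A|<n/2$ some cycle edge would be forced to lie entirely in $B$, which is impossible in $\h_0$.

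For $n=4k+2$ I would take $|A|=k$, so $|B|=3k+2$ and $2|A|=2k<n/2=2k+1$. A direct count gives $\delta_1(\h_0)=\binom{n-1}{2}-\binom{3k+1}{2}$, which matches the claimed bound since here $c=1$ and $\lfloor 3n/4\rfloor=3k+1$.

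For $n=4k$ the analogous choice $|A|=k-1$ produces a graph with no loose Hamilton cycle but minimum degree $\binom{n-1}{2}-\binom{3k}{2}$, one below the target $\binom{n-1}{2}-\binom{3k}{2}+1$. To lift $\delta_1$ by exactly $1$, I would fix two vertices $x,y\in B$ and augment $\h_0$ with every triple of the form $\{x,y,z\}$, $z\in B\setminus\{x,y\}$. Each $v\in B\setminus\{x,y\}$ then gains exactly one edge, namely $\{x,y,v\}$, so $\delta_1$ goes up by $1$ to the target. Since in a loose cycle any two distinct edges share at most one vertex, at most one edge of a hypothetical loose Hamilton cycle could contain both $x$ and $y$; the remaining $\geq 2k-1$ cycle edges would then have to meet $A$, but $A$ meets at most $2|A|=2k-2$ cycle edges, a contradiction.

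The only delicate step is the $4\mathbb{N}$ case, where the straight ``cover by $A$'' construction is a single edge short in minimum degree. The common-pair trick is lightweight because the very definition of a loose cycle forbids the pair $\{x,y\}$ from appearing in two different edges, so the added triples are ``free'' as far as loose Hamilton cycles are concerned. The rest of the verification is elementary incidence counting, which I would omit from the write-up beyond a single line per case.
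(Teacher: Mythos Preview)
Your proposal is correct and essentially identical to the paper's own proof: the same bipartition with $|A|=\lceil n/4\rceil-1$, the same ``add all triples through a fixed pair $\{x,y\}\subset B$'' fix in the $4\mathbb{N}$ case, and the same counting argument (at most $2|A|$ cycle edges can meet $A$, at most one can contain $\{x,y\}$). The only cosmetic difference is that the paper phrases the $4\mathbb{N}$ contradiction as ``at least two cycle edges lie in $B$, but any two edges in $B$ share two vertices,'' which is logically equivalent to your version.
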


\begin{proof}
Let $\h_1=(V_1,E_1)$ be the 3-graph on $n\in 2\mathbb N\setminus 4\mathbb N$ vertices such that $V_1=A\dot\cup B$
\footnote{Throughout the paper, we write $A\dot\cup B$ for $A\cup B$ when sets $A$, $B$ are disjoint.}
with $|A| =  \lceil\frac n4 \rceil- 1$ and $|B|=\lfloor \frac {3n}4 \rfloor+1$, and $E_1$ consists of all triples intersecting $A$. Note that $\delta_1(\h_1)= \binom{n-1}2 - \binom{\lfloor \frac {3n}4 \rfloor}2$. Suppose that $\h_1$ contains a loose Hamilton cycle $C$. There are $n/2$ edges in $C$ and every vertex in $A$ is contained in at most two edges in $C$.
Since $2|A|= \frac{n-2}2$, there is at least one edge of $C$ whose vertices are completely from $B$. This is a contradiction since $B$ is independent. So $\h_1$ contains no loose Hamilton cycle.

Let $\h_2 = (V_2, E_2)$ be a 3-graph on $n\in 4\mathbb{N}$ vertices such that $V_2 = A \dot\cup B$ with $|A| =  \frac n4 - 1$ and $|B| = {\frac34 n} + 1$, and $E_2$ consists of all triples intersecting $A$ and those containing both $b_1$ and $b_2$, where $b_1, b_2$ are two fixed vertices in $B$. Then $\delta_1(\h_2) = \binom{n-1}2 - \binom{\frac34 n}2 + 1$. Suppose that $\h_2$ contains a loose Hamilton cycle $C$. There are $n/2$ edges in $C$ and every vertex in $A$ is contained in at most two edges in $C$. Thus, there are at least two edges of $C$ whose vertices are completely from $B$. But due to the construction, every two edges in $B$ share two vertices so they cannot both appear in one loose cycle. This contradiction shows that $\h_2$ contains no loose Hamilton cycle.
\end{proof}

As a typical approach of obtaining exact results, we distinguish the \emph{extremal} case from the \emph{nonextremal} case and solve them separately.

\begin{definition}
Given $\beta>0$, a 3-graph $\h$ on $n$ vertices is called $\beta$-extremal if there is a set $B\subseteq V(\h)$, such that $|B| = \lfloor3n/4\rfloor$ and $e(B)\le \beta n^3$.
\end{definition}

\begin{theorem}[Extremal Case]\label {lemE}
There exist $\beta>0$ and $n_{\ref{lemE}}\in \mathbb N$ such that the following holds. Let $n>n_{\ref{lemE}}$ be an even integer. Suppose that $\h$ is a 3-graph on $n$ vertices satisfying \eqref{eqdeg}. If $\h$ is $\beta$-extremal, then $\h$ contains a loose Hamilton cycle.
\end{theorem}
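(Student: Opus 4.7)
The plan is to exploit the near-independence of $B$ to construct the loose Hamilton cycle explicitly. A loose Hamilton cycle on $n$ vertices has $n/2$ edges, $n/2$ ``shared'' vertices (each in two edges), and $n/2$ ``middle'' vertices (each in one edge). Since $|B|=\lfloor 3n/4\rfloor$ with $e(B)\le \beta n^3$, all but $o(n)$ cycle edges must meet $A:=V(\h)\setminus B$; and since $|A|\approx n/4$, essentially every vertex of $A$ must play the shared role. The target combinatorial structure is therefore a cyclic sequence in which $A$-shared and $B$-shared vertices alternate, separated by $B$-middle vertices: $\ldots, a_i, m, b, m', a_{i+1}, \ldots$ with $a_i, a_{i+1}\in A$, $m,b,m'\in B$, and $\{a_i,m,b\}, \{b,m',a_{i+1}\}\in E(\h)$. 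A vertex count shows this is exactly compatible with $|A|,|B|$ when $n\in 4\mathbb N$; when $n\in 2\mathbb N\setminus 4\mathbb N$ there must be a unique ``defect'' position where two $A$-shared vertices are adjacent in the natural order.

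\medskip

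The first step would be to \emph{clean} the partition. Starting from $(A_0,B_0)$ given by the extremal hypothesis, I would move each vertex to the side on which it fits: $v\in A_0$ is moved to $B_0$ if it lies in too few $A_0\times B_0\times B_0$ edges, and $v\in B_0$ is moved to $A_0$ if it lies in too many edges inside $B_0$. Using $e(B_0)\le \beta n^3$ together with \eqref{eqdeg}, only $O(\sqrt{\beta}\,n)$ vertices need be moved; after restoring $|B|=\lfloor 3n/4\rfloor$ by a handful of swaps, we obtain a clean partition $V(\h)=A\dot\cup B$ in which every $v\in A$ lies in $\Omega(n^2)$ edges of type $A\times B\times B$ and every $v\in B$ lies in $o(n^2)$ edges inside $B$. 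Next, I would reserve a random \emph{reservoir} $R\subseteq B$ of size $\Theta(n/\log n)$ so that, with high probability, every pair $\{a,a'\}\subseteq A$ admits many $R$-connectors as above, and construct a short absorbing loose path $P_{\mathrm{abs}}$ (disjoint from $R$) that can swallow any sufficiently small leftover set with the expected $A$/$B$ proportions.

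\medskip

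The main step is a Hall-type matching that realises the target structure on $A\cup (B\setminus R)$. Orienting the $A$-vertices cyclically, we view the consecutive pairs of $A$-vertices as ``demands'' and the available $(B\setminus R)$-connectors as ``supplies''; the typicality produced by cleaning gives a large minimum degree on both sides of the auxiliary bipartite graph, so Hall's condition holds and a near-perfect assignment exists, yielding a loose path $P$ that misses only a sublinear remainder. Absorbing that remainder via $P_{\mathrm{abs}}$ and then closing through two further $R$-connectors finishes the Hamilton cycle. The main obstacle will be \emph{parity}: Proposition~\ref{factcounter} shows the $+c$ term in \eqref{eqdeg} is tight, so the construction must deliver \emph{exactly} $|A|$ $A$-shared vertices and $|B|-n/2$ $B$-shared vertices with the correct local alternation (including the unique defect when $n\in 2\mathbb N\setminus 4\mathbb N$). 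The extra $+c$ slack provides just enough degree to force one edge at the defect location to have the required $A$-endpoints; the $O(\sqrt{\beta}\,n)$ exceptional vertices produced in the cleaning stage are deposited into $P_{\mathrm{abs}}$ so they never interfere with the Hall argument.
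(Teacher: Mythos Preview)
Your outline captures the right target picture, but two of the load-bearing steps are not pinned down, and the paper in fact handles them differently.

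First, the Hall step. If the ``supplies'' are connector triples $(m,b,m')\subseteq B\setminus R$, then a matching in your auxiliary bipartite graph does not by itself make the chosen triples pairwise vertex-disjoint, nor does it make them cover $B\setminus R$; that is a hypergraph-matching constraint, and ``large minimum degree on both sides'' is not enough. The paper avoids this by dualising your plan: rather than fixing a cyclic order on $A$ and then assigning $B$-triples to the gaps, it first groups $B_1$ into $|A_1|-1$ fixed triples by taking two random perfect matchings in a dense auxiliary bipartite graph on $B_1$ (using the K\"uhn--Osthus lemma on random matchings in super-regular pairs), and only then builds a bipartite graph $\Gamma$ between $A_1$ and these triples and finds a Hamilton path in $\Gamma$ by Moon--Moser. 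This is Lemma~\ref{lem:finish_3cyc}; note that there is no reservoir and no absorbing path in the extremal proof at all.

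Second, your account of where the $+c$ is spent is off. In the paper the constant $c$ enters only in Claim~\ref{clm:Bpath}, and it is not about an $AAB$ ``defect'' edge. The point is that cleaning may produce $q:=|A\cap B'|>0$ vertices of $A$ with $B'$-like degree, destroying the $1{:}3$ ratio; to restore it one must find a few edges \emph{inside} $B'$ and lay them on a short preliminary path $P$ (which also swallows all of $V_0$, playing the role your $P_{\mathrm{abs}}$ was meant to play). The extra $+c$ is exactly what forces $\deg(b,B')\ge c$ for $b\in B'$ in the tight case $q=1$, hence those $B'$-edges exist; the residue $n\bmod 4$ only changes how many such edges are needed. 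After $P$ is extended to $Q$ so that the leftover satisfies \emph{exactly} $|B_1|=3(|A_1|-1)$, Lemma~\ref{lem:finish_3cyc} finishes directly, with no absorption and no remainder.
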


\begin{theorem}[Nonextremal Case]\label {lemNE}
For any $\beta>0$, there exist $\r>0$ and $n_{\ref{lemNE}}\in \mathbb N$ such that the following holds.
Let $n>n_{\ref{lemNE}}$ be an even integer. Suppose that $\h$ is a 3-graph on $n$ vertices satisfying
$\delta_1(\h)\ge \left(\frac7{16} - \r \right)\binom n {2}$. If $\h$ is not $\beta$-extremal, then $\h$ contains a loose Hamilton cycle.
\end{theorem}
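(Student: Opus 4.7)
The plan is to follow the absorbing-method framework of R\"odl--Ruci\'nski--Szemer\'edi, in broad strokes parallel to Bu\ss--H\`an--Schacht's proof of Theorem \ref{thmBHS}, but converting the non-extremality hypothesis into the slack that $+\gamma\binom{n}{2}$ provided there. Fix constants $1/n \ll \gamma \ll \eta \ll d \ll \beta$. The argument has four ingredients: an absorbing loose path $P_{\mathrm{abs}}$, a small reservoir $R$, a near-spanning loose path $P^*$ in the remainder, and a final absorption step stitching everything into a loose Hamilton cycle.

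First, I would prove an \emph{absorbing lemma}: there is a loose path $P_{\mathrm{abs}}$ on at most $\eta^2 n$ vertices, with two specified endvertices $x,y$, such that for every set $U \subseteq V(\h) \setminus V(P_{\mathrm{abs}})$ with $|U|$ even and $|U| \le \eta^3 n$, there is a loose path from $x$ to $y$ with vertex set exactly $V(P_{\mathrm{abs}}) \cup U$. The standard construction defines, for each pair $\{u,v\}$, an absorbing gadget---a short loose path that can be rerouted to incorporate $u$ and $v$---shows by a counting argument under the degree hypothesis that every pair has $\Omega(n^c)$ such gadgets, and selects $P_{\mathrm{abs}}$ from a random disjoint collection. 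Independently, I would pick a small reservoir $R$ such that any two vertices outside $R$ can be joined by a short loose path with interior in $R$, using only a tiny fraction of $R$ per linkage. Both steps need only the minimum-degree hypothesis, and are insensitive to the non-extremality assumption.

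The crux is the \emph{almost-cover step} applied to the restriction of $\h$ to $V(\h) \setminus (V(P_{\mathrm{abs}}) \cup R)$, call it $\h'$. Apply the weak 3-uniform regularity lemma to $\h'$ and form the reduced 3-graph $\mathcal{R}$ on cluster indices $[t]$ whose edges are triples of clusters inducing a regular, sufficiently dense triple in $\h'$. Standard degree-inheritance gives $\delta_1(\mathcal{R}) \geq (7/16 - \gamma')\binom{t}{2}$, and a standard edge-counting argument lifts non-extremality of $\h$ to the statement that every cluster-set of size $\lfloor 3t/4 \rfloor$ spans many edges of $\mathcal{R}$. The goal is to find in $\mathcal{R}$ a collection of loose paths covering all but $\eta t$ clusters; lifting each such path via regularity to a loose path in $\h'$ and concatenating through $R$ produces $P^*$.

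The main obstacle is exactly this reduced-hypergraph step: at the threshold $7/16$, the construction of Proposition \ref{factcounter} shows that the pure minimum-degree bound used in \cite{BHS} is tight, so non-extremality of $\mathcal{R}$ must be exploited directly. The key claim I would prove is a stability-type statement: if $\mathcal{R}$ admits no near-perfect loose path cover, then there must be a set $B_\mathcal{R} \subseteq [t]$ of size $\lfloor 3t/4 \rfloor$ with $e_\mathcal{R}(B_\mathcal{R})$ very small, whence the union $B := \bigcup_{i \in B_\mathcal{R}} V_i$ (suitably padded from $V(P_{\mathrm{abs}}) \cup R$ to exact size $\lfloor 3n/4 \rfloor$) satisfies $e_\h(B) \le \beta n^3$, contradicting non-extremality. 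Once $P^*$ is found, the remaining work is routine: connect the endpoints of $P^*$ to those of $P_{\mathrm{abs}}$ through two short paths in $R$, leaving a leftover set of at most $\eta^3 n$ vertices, which are then absorbed by $P_{\mathrm{abs}}$ to close the full loose Hamilton cycle.
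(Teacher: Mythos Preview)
Your framework is exactly the one the paper follows: absorbing path (quoting \cite{BHS}), reservoir (quoting \cite{BHS}), regularity on the remainder, a stability-type path-cover lemma in the cluster hypergraph, connection through the reservoir, and absorption. You have also correctly isolated the sole new content---the stability statement that failure of a near-perfect cover in the reduced hypergraph forces a sparse set of size $\lfloor 3t/4\rfloor$---and correctly traced how that lifts back to $\beta$-extremality of $\h$.

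The one place where your outline diverges from the paper is in \emph{what} you tile by in the reduced hypergraph. You speak of a ``loose path cover'' of $\mathcal R$; the paper instead tiles $\mathcal R$ by copies of the four-vertex, two-edge $3$-graph $\Y$ (edges $123$, $234$), and then converts each copy of $\Y$ into two long loose paths in $\h'$ via a direct greedy argument in a regular triple (Proposition~\ref{prop25}). This is a genuine simplification over \cite{BHS}, which tiled by a more complicated eight-vertex gadget $\M$ and needed a fractional-tiling step plus a second application of regularity. The $\Y$-tiling lemma (Lemma~\ref{lem:Y}) is the technical heart of the paper: its proof is a structural analysis of a maximum $\Y$-tiling, classifying the bipartite link graphs of leftover vertices on pairs of $\Y$-copies, and extracting a small ``center'' set $C$ whose complement is $\Y$-free (hence sparse). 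Your proposal leaves this step as a black box; the paper's contribution is precisely the choice of $\Y$ and the argument that makes this black box openable.
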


Theorem \ref{thmmain} follows from Theorems \ref{lemE} and \ref{lemNE} immediately by choosing $\beta$ from Theorem~\ref{lemE} and letting $n_{\ref{thmmain}}=\max\{n_{\ref{lemE}}, n_{\ref{lemNE}}\}$.

Let us discuss our proof ideas here. The proof of Theorem~\ref{lemE} is somewhat standard (though non-trivial). The proof of Theoreom~\ref{lemNE} follows the approach in the previous work \cite{BHS, HS, KMO, RRS06, RRS08, RRS11}. Roughly speaking, we use the \emph{absorbing method} initiated by R\"odl, Ruci\'nski and Szemer\'edi, which reduces the task of finding a loose Hamilton cycle to finding constantly many vertex-disjoint loose paths that covers almost all the vertices of the 3-graph.
More precisely, we first apply the Absorbing Lemma (Lemma~\ref{lemA}) and obtain a (short) absorbing path $\mathcal P_0$ which can absorb any smaller proportion of vertices. Second, apply the Reservoir Lemma (Lemma~\ref{lemR}) and find a small reservoir set $R$ whose vertices may be used to connect any constant number of loose paths to a loose cycle. Third, apply the Path-tiling Lemma (Lemma~\ref{lemP}) in the remaining 3-graph and obtain constantly many vertex-disjoint loose paths covering almost all the vertices. Fourth, connect these paths (including $\mathcal P_0$) together by the reservoir $R$ and get a loose cycle $C$. Finally we absorb the vertices not in $V(C)$ to $\mathcal P_0$ and obtain the desired loose Hamilton cycle.

The Absorbing Lemma and the Reservoir Lemma are not very difficult and already proven in \cite{BHS}. Thus the main step is to prove the Path-tiling Lemma, under the assumption $\delta_1(\h)\ge \left(\frac7{16} - \r \right)\binom n {2}$ and that $\h$ is not $\beta$-extremal (in contrast, $\delta_1(\h)\ge (\frac7{16}+\r)\binom n2$ is assumed in \cite{BHS}). As shown in \cite{BHS, HS}, after applying the (weak) Regularity Lemma, it suffices to prove that the cluster 3-graph $\K$ can be tiled almost perfectly by some particular 3-graph.
For example, the 3-graph $\M$ given in \cite{BHS} has the vertex set $[8] = \{1, 2, \dots, 8\}$ and edges $123, 345, 456, 678$.\footnote{Throughout the paper, we often represent a set $\{v_1,v_2,\dots,v_k\}$ as $v_1v_2\cdots v_k$.} Since it is hard to find an $\M$-tiling directly, the authors of \cite{BHS} found a \emph{fractional} $\M$-tiling instead and converted it to an (integer) $\M$-tiling by applying the Regularity Lemma again. In this paper we consider a much simpler 3-graph $\Y$ with vertex set $[4]$ and edges $123, 234$, and obtain an almost perfect $\Y$-tiling in $\K$ directly. Interestingly, $\Y$-tiling
was studied (via the codegree condition) in the very first paper on loose Hamilton cycles \cite{KO}.

Comparing with the first exact result on Hamilton cycles in hypergraphs \cite{RRS11}, our proof is much shorter because the Absorbing and Reservoir Lemmas in \cite{RRS11} are much harder to prove.

The rest of the paper is organized as follows: we prove Theorem~\ref{lemNE} in Section 2 and Theorem~\ref{lemE} in Section 3, and give concluding remarks in Section 4.

\subsection{Notations}

Given a vertex $v$ and disjoint vertex sets $S, T$ in a 3-graph $\h$, we denote by $\deg_{\h}(v, S)$ the number of edges that contain $v$ and two vertices from $S$, and by $\deg_{\h}(v, ST)$ the number of edges that contain $v$, one vertex from $S$ and one vertex from $T$.
Furthermore, let $\overline \deg_\h (v, S)=\binom{|S|}2-\deg_\h (v,S)$ and $\overline \deg_\h (v, ST)=|S|\cdot |T|-\deg_\h (v, ST)$.
Given not necessarily disjoint sets $X, Y, Z\subseteq V(\h)$, we define
\begin{align*}
&E_{\h}(XYZ)=\{xyz\in E(\h): x\in X, y\in Y, z\in Z\}, \\
&\overline E_{\h}(XYZ)=\left\{xyz\in \binom{V(\h)}{3}\setminus E(\h): x\in X, y\in Y, z\in Z \right\},
\end{align*}
$e_{\h}(XYZ)=|E_{\h}(XYZ)|$, and $\overline e_{\h}(XYZ)=|\overline E_{\h}(XYZ)|$. The subscript $\h$ is often omitted when it is clear from the context.

A \emph{loose} path $\mathcal P=v_1v_2$ $\cdots$ $v_{2k+1}$ is a 3-graph on $\{v_1, v_2, \dots, v_{2k+1}\}$ with edges $v_{2i-1}v_{2i}v_{2i+1}$ for all $i\in [k]$. The vertices $v_1$ and $v_{2k+1}$ are called the \emph{ends} of $\mathcal P$.

\section{Proof of Theorem \ref{lemNE}}

In this section we prove Theorem \ref{lemNE} by following the same approach as in \cite{BHS}.

\subsection{Auxiliary lemmas and Proof of Theorem~\ref{lemNE}}

For convenience, we rephrase the Absorbing Lemma \cite[Lemma 7]{BHS} as follows.\footnote{Lemma 7 in \cite{BHS} assumes that $\delta_1(\h) \ge (\frac{5}{8} + \r)^2 \binom n2$ and returns $|V(\mathcal P)|\le \r^7 n$ with $|U|\le \frac{\r^{14}}{14336} n$. We simply take their $\r^7$ as our $\r_1$ and thus $\r_1 \le \left(\sqrt{\frac{13}{32}} - \frac{5}{8}\right)^7 \approx 10^{-14}$.}

\begin{lemma}[Absorbing Lemma]\label{lemA}
For any $0<\r_1 \le 10^{-14}$ there exists an integer $n_{\ref{lemA}}$ such that the following holds. Let $\h$ be a 3-graph on $n>n_{\ref{lemA}}$ vertices with
$
\delta_1(\h) \ge \frac{13}{32}\binom n2.
$
Then there is a loose path $\mathcal P$ with $|V(\mathcal P)|\le \gamma_1 n$ such that for every subset $U\subseteq V\ba V(\mathcal P)$ with $|U|\le \r_1^3 n$ and $|U|\in 2\mathbb N$ there exists a loose path $\mathcal Q$ with $V(\mathcal Q) = V(\mathcal P)\cup U$ such that $\mathcal P$ and $\mathcal Q$ have the same ends.
\end{lemma}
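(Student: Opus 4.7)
The lemma is essentially a restatement of Lemma~7 of \cite{BHS} with parameters renamed, as the footnote makes explicit. My plan is therefore to verify the parameter translation, and secondarily to sketch the absorbing method underlying the cited result.

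Set $\gamma_0 := \sqrt{13/32} - 5/8 \approx 0.0123$. For any $0 < \gamma \le \gamma_0$ one has $(5/8+\gamma)^2 \le 13/32$, so the hypothesis $\delta_1(\h) \ge \frac{13}{32}\binom{n}{2}$ implies the BHS hypothesis $\delta_1(\h) \ge (5/8+\gamma)^2\binom{n}{2}$. Given $\gamma_1 \le 10^{-14}$, I would set $\gamma := \gamma_1^{1/7} \le 10^{-2} < \gamma_0$. Lemma~7 of \cite{BHS} then produces a loose path $\mathcal P$ on at most $\gamma^7 n = \gamma_1 n$ vertices that absorbs any even $U \subseteq V \setminus V(\mathcal P)$ with $|U| \le \gamma^{14}/14336 \cdot n = \gamma_1^2/14336 \cdot n$. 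Since $\gamma_1 \le 1/14336$, this is strictly weaker than the requested $|U| \le \gamma_1^3 n$, and the conclusion follows.

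For orientation on the underlying method, the proof in \cite{BHS} follows the absorbing technique of R\"odl--Ruci\'nski--Szemer\'edi: (i) define for every pair $\{x,y\}$ an \emph{absorber}, a short loose path admitting two realizations with common endpoints, one of which uses $\{x,y\}$ and one of which does not, so that the presence of such an absorber in a global loose path lets $\{x,y\}$ be absorbed by swapping realizations; (ii) show, using the degree hypothesis, that each pair has $\Omega(n^c)$ absorbers for some fixed $c$; (iii) sample a random subfamily and use Chernoff/Markov to obtain many pairwise-disjoint absorbers per pair while keeping the total small; (iv) connect the surviving absorbers into a single loose path $\mathcal P$ via a short-path connecting argument that again uses the vertex-degree bound; and (v) given a prescribed even $U$, partition it into pairs and absorb them in turn. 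I expect the main obstacle to be step (ii): the vertex-degree hypothesis gives no direct control on the pair-degrees $\deg(xy)$, so the count must proceed inside the link graphs of $x$ and $y$, double-counting through a suitable central vertex. The constant $13/32$ is tuned to lie just above $(5/8)^2$, reflecting that converting vertex-degree information to effective pair-degree information costs roughly a square -- which is why the BHS hypothesis appears naturally in the form $(5/8+\gamma)^2$.
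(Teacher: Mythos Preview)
Your proposal is correct and matches the paper's approach exactly: the paper does not give a proof of this lemma either, but simply cites \cite[Lemma~7]{BHS} and notes in a footnote that taking their $\gamma^7$ as the present $\gamma_1$ (whence $\gamma_1 \le (\sqrt{13/32}-5/8)^7 \approx 10^{-14}$) yields the stated version. Your parameter check is the same one, spelled out more carefully, and the sketch of the absorbing method is an accurate summary of what is done in \cite{BHS}.
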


We also need the Reservoir Lemma \cite[Lemma 6]{BHS}.
\begin{lemma}[Reservoir Lemma]\label{lemR}
For any $0< \gamma_2 < 1/4$ there exists an integer $n_{\ref{lemR}}$ such that for every 3-graph $\h$ on $n>n_{\ref{lemR}}$ vertices satisfying
\[
\delta_1(\h)\ge (1/4 + \gamma_2)\binom n2,
\]
there is a set $R$ of size at most $\gamma_2 n$ with the following property: for every  $k\le \gamma_2^3 n/12$ mutually disjoint pairs $\{a_i,b_i\}_{i\in [k]}$ of vertices from $V(\h)$ there are $3k$ vertices $u_i, v_i, w_i$, $i\in [k]$ from $R$ such that $a_iu_iv_i, v_iw_ib_i\in \h$ for all $i\in [k]$.
\end{lemma}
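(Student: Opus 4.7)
The plan is to build $R$ by independent random sampling, verify its reservoir property by concentration and a union bound, and realize the $k$ connections greedily. Set $p=\gamma_2/2$ and include every vertex of $V(\h)$ in $R$ independently with probability $p$. A standard Chernoff bound gives $|R|\le\gamma_2 n$ with probability $1-e^{-\Omega(n)}$.

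For each pair $\{a,b\}$ of distinct vertices of $\h$, let $d_x(v)=|\{u:xvu\in E(\h)\}|$ and $T(a,b)=\sum_{v\ne a,b}d_a(v)d_b(v)$. The key combinatorial step is a uniform lower bound $T(a,b)\ge c_0\gamma_2 n^3$ for an absolute constant $c_0>0$. I would prove it as follows: from $\binom{|N_x|}{2}\ge\deg(x)\ge(1/4+\gamma_2)\binom{n}{2}$ with $N_x=\{v:d_x(v)>0\}$, one deduces $|N_x|\ge(1/2+\gamma_2-o(1))n$, so $|N_a\cap N_b|\ge 2\gamma_2 n-o(n)$; every edge $\{a,v,z\}\in E(\h)$ forces $v,z\in N_a$, hence $d_a(v)\le|N_a|-1$, and combining the mass identity $\sum_v d_a(v)=2\deg(a)$ with the trivial bound $\sum_{v\in N_a\setminus N_b}d_a(v)\le|N_a\setminus N_b|(|N_a|-1)$ forces $\sum_{v\in N_a\cap N_b}d_a(v)\ge\Omega(\gamma_2 n^2)$ in the tight case $|N_a|\approx(1/2+\gamma_2)n$. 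Combined with the analogous bound for $b$ and the ceiling $d_a(v),d_b(v)\lesssim n/2$, this forces the codegrees to be nearly uniform on $N_a\cap N_b$ at value $\approx n/2$, whence $T(a,b)\ge\sum_{v\in N_a\cap N_b}d_a(v)d_b(v)\ge\Omega(\gamma_2 n^3)$; a short separate argument handles the case where $|N_a|$ or $|N_b|$ exceeds $(1/2+\gamma_2)n$ by a positive amount, since $|N_a\cap N_b|$ is then correspondingly larger. Letting $X_{a,b}$ denote the restriction of $T(a,b)$ to triples in $R^3$, one obtains $\mathbb{E}[X_{a,b}]=p^3T(a,b)-O(p^2n^2)=\Omega(\gamma_2^4 n^3)$, and standard concentration (for example via McDiarmid's inequality, since flipping a single vertex's status in $R$ changes $X_{a,b}$ by at most $O(|R|^2)=O(\gamma_2^2 n^2)$) gives $X_{a,b}\ge\tfrac12\mathbb{E}[X_{a,b}]$ except with probability $e^{-\Omega(n)}$. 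A union bound over all $\binom{n}{2}$ pairs shows that, with positive probability, $R$ satisfies $|R|\le\gamma_2 n$ and $X_{a,b}\ge c_1\gamma_2^4 n^3$ for every pair.

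Fix such a reservoir $R$ and process the $k\le\gamma_2^3 n/12$ disjoint pairs one at a time. When handling pair $i$, at most $3(i-1)\le 3k$ vertices of $R$ have been used, so at most $3\cdot 3k\cdot|R|^2\le\tfrac{3}{4}\gamma_2^5 n^3$ triples of $R^3$ contain a previously used vertex. Since $c_1\gamma_2^4 n^3$ strictly exceeds this bound (for suitable $c_0,c_1$, which are obtained by optimizing $p$), at least one valid unblocked triple $(u_i,v_i,w_i)$ remains, and we select it. The main obstacle is the uniform bound $T(a,b)\ge c_0\gamma_2 n^3$: because the hypothesis $\delta_1(\h)\ge(1/4+\gamma_2)\binom{n}{2}$ is only marginally above the shadow-graph threshold $\binom{n}{2}/4$, naive inclusion--exclusion on $\{v:d_a(v)\ge\alpha n\}\cap\{v:d_b(v)\ge\alpha n\}$ yields a negative estimate, so the argument must exploit the edge-consistency ceiling $d_x(v)\le|N_x|-1$ in tandem with the lower bound on $|N_x|$ via the tightness-case analysis above, rather than any single averaging inequality.
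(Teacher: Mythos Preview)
The paper does not give its own proof of this lemma; it is quoted verbatim as Lemma~6 of Bu\ss, H\`an and Schacht and used as a black box. Your overall plan---sample $R$ independently with probability $p\approx\gamma_2$, show by concentration and a union bound over all pairs that every $\{a,b\}$ retains many connecting triples inside $R$, then connect the $k$ pairs greedily---is exactly the standard mechanism behind reservoir lemmas of this type, and is in spirit what the cited proof does.

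The substantive gap is precisely where you flag it: the uniform lower bound $T(a,b)\ge c_0\gamma_2 n^3$. Your argument handles only the situation in which both $|N_a|$ and $|N_b|$ sit at the Kruskal--Katona minimum $(1/2+\gamma_2+o(1))n$; there the link graphs are forced to be near-cliques, the degrees $d_a,d_b$ are pinned near $n/2$ on the overlap $N_a\cap N_b$, and the bound follows. But once $|N_a|$ is allowed to exceed $(1/2+\gamma_2)n$, the ceiling $d_a(v)\le|N_a|-1$ becomes slack, and the inequality $\sum_{v\in N_a\cap N_b}d_a(v)\ge 2\deg(a)-|N_a\setminus N_b|(|N_a|-1)$ can be vacuous (for instance when $|N_a|\approx n$ and $\gamma_2$ is small the right-hand side is negative). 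The sentence ``a short separate argument handles the case where $|N_a|$ or $|N_b|$ exceeds $(1/2+\gamma_2)n$'' does no actual work: nothing in your outline prevents $d_a$ from being small exactly where $d_b$ is large. You need a genuinely different estimate in the non-tight regime---for example, bounding from below the number of $L_a$-edges meeting any set $S$ of size exceeding $n/2$, and combining this with a lower bound on $|\{v:d_b(v)\ge\tau n\}|$ for a well-chosen threshold $\tau$---and the constants have to be tracked carefully because the final greedy comparison $X_{a,b}>9k|R|^2$ is tight in $\gamma_2$.

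Two smaller issues. First, your McDiarmid step invokes a Lipschitz bound of $O(|R|^2)$, but $|R|$ is random; the worst-case bounded difference is $O(n^2)$, which still yields $e^{-\Omega(\gamma_2^8 n)}$ concentration and survives the union bound, so this is cosmetic. Second, with $p=\gamma_2/2$ and the best possible $c_0\approx 1/2$ (attained when both links are cliques on barely-overlapping sets), the inequality $c_1\gamma_2^4 n^3>\tfrac34\gamma_2^5 n^3$ fails for $\gamma_2\gtrsim 1/24$; you must take $p$ closer to $\gamma_2$, as you hint, to cover the full range $0<\gamma_2<1/4$ stated in the lemma.
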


The main step in our proof of Theorem~\ref{lemNE} is the following lemma, which is stronger than\cite[Lemma 10]{BHS}.

\begin{lemma}[Path-tiling lemma]\label{lemP}
For any $0<\r_3, \a<1$ there exist integers $p$ and $n_{\ref{lemP}}$ such that the following holds for $n>n_{\ref{lemP}}$. Suppose $\h$ is a 3-graph on $n$ vertices with minimum vertex degree
\[
\delta_1(\h)\ge \left(\frac7{16} - \r_3 \right)\binom n {2},
\]
then there are at most $p$ vertex disjoint loose paths in $\h$ that together cover all but at most $\a n$ vertices of $\h$ unless $\h$ is $2050\r_3$-extremal.
\end{lemma}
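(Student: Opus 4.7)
My plan is to follow the regularity-based blueprint of \cite{BHS}, replacing their $8$-vertex building block and their \emph{fractional} tiling argument by a direct \emph{integer} $\mathcal{Y}$-tiling argument in the cluster hypergraph, where $\mathcal{Y}$ is the $3$-graph on $[4]$ with edges $\{123, 234\}$. Each $\mathcal{Y}$-copy found in the cluster graph will be turned into a single long loose path in $\mathcal{H}$ via a standard regularity-based embedding, so an almost-perfect $\mathcal{Y}$-tiling in the cluster graph immediately yields a constant-sized family of loose paths covering all but $\alpha n$ vertices of $\mathcal{H}$.

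\emph{Step 1 (Regularity setup).} Apply the weak hypergraph regularity lemma to $\mathcal{H}$ with parameters $\varepsilon \ll d \ll \gamma_3, \alpha$, producing a partition $V_0 \dot\cup V_1 \dot\cup \cdots \dot\cup V_t$ with $|V_0|\le \varepsilon n$ and $|V_1| = \cdots = |V_t| = m$. Define the cluster $3$-graph $\mathcal{K}$ on $[t]$ whose edges are those triples $\{i,j,k\}$ for which $(V_i, V_j, V_k)$ is $\varepsilon$-regular of density at least $d$. A routine double-count, after discarding contributions from $V_0$, from irregular triples, and from low-density triples, yields $\delta_1(\mathcal{K}) \ge \bigl(\tfrac{7}{16} - 2\gamma_3\bigr)\binom{t}{2}$; one also verifies that if $\mathcal{H}$ is not $2050\gamma_3$-extremal then $\mathcal{K}$ is not $\beta_0$-extremal for an appropriate $\beta_0 = \Theta(\gamma_3)$.

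\emph{Step 2 (Almost-perfect $\mathcal{Y}$-tiling, main step).} Show that any $3$-graph $\mathcal{K}$ on $t$ vertices satisfying the above minimum-degree condition and not $\beta_0$-extremal admits a $\mathcal{Y}$-tiling covering all but at most $\alpha t / 2$ vertices. The strategy is to take a maximum $\mathcal{Y}$-tiling $\mathcal{F}$ with uncovered set $U := [t] \setminus V(\mathcal{F})$ and assume $|U| > \alpha t/2$ for contradiction. Maximality of $\mathcal{F}$ rules out both a $\mathcal{Y}$-copy wholly inside $U$ and every ``swap'' that would replace a copy in $\mathcal{F}$ by another copy freeing enough vertices for $U$ to consume. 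Translating these forbidden configurations through the degree condition on $U$-vertices, via an averaging argument over pairs $u, u' \in U$ together with the asymmetric roles of the end-vertices $\{1,4\}$ versus the shared pair $\{2,3\}$ in $\mathcal{Y}$, one pins down a set $B^* \subseteq [t]$ of size $\lfloor 3t/4\rfloor$ with $e_{\mathcal{K}}(B^*) \le \beta_0 t^3$, contradicting non-extremality of $\mathcal{K}$.

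\emph{Step 3 (Conversion to loose paths).} Each $\mathcal{Y}$-copy on clusters $V_a, V_b, V_c, V_d$ carries two $\varepsilon$-regular triples $(V_a, V_b, V_c)$ and $(V_b, V_c, V_d)$ of density at least $d$. A standard greedy embedding that alternates between the two triples and shares one vertex between consecutive edges produces a single loose path using all but at most $\varepsilon m$ vertices from each cluster. The resulting family has at most $t/4 =: p$ paths (a constant depending only on $\gamma_3$ and $\alpha$), and the total number of uncovered vertices is at most $|V_0| + (\alpha t/2)\, m + 4t\varepsilon m \le \alpha n$. The main obstacle is Step 2: the swap/augmenting analysis must produce a $\beta_0$ that is linear in $\gamma_3$ with a small enough leading constant so that, after the $O(1)$ blow-up from cluster sizes and regularity losses, pulling back $B := \bigcup_{i \in B^*} V_i$ (adjusted to size exactly $\lfloor 3n/4\rfloor$) witnesses extremality of $\mathcal{H}$ with the declared constant $2050$.
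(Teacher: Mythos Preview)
Your proposal follows essentially the same route as the paper: regularise, find an almost-perfect $\mathcal{Y}$-tiling in the cluster hypergraph (or conclude extremality), then convert each $\mathcal{Y}$-copy into loose paths. Two remarks on the details.

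In Step~3 the paper does not alternate between the two regular triples to build a single path; instead it halves the shared clusters $V_2,V_3$ and applies a single-triple embedding (Proposition~\ref{prop25}) separately to $(V_1,V_2^1,V_3^1)$ and $(V_4,V_2^2,V_3^2)$, obtaining \emph{two} paths per $\mathcal{Y}$-copy. Your alternating scheme is also feasible (the pattern $V_2,V_1,V_3,V_4,V_2,V_1,V_3,V_4,\dots$ uses each cluster equally), but it requires a two-triple greedy embedding rather than a black-box call to the single-triple proposition, so the paper's version is the cleaner one.

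In Step~2, which is where all the real work lies, the paper's argument is more structured than your sketch suggests, and the pivot is not an averaging over pairs $u,u'\in U$. Rather, for each $u\in U$ and each pair $\{i,j\}$ of tiles it classifies the bipartite link graph $L_{i,j}(u)$ on $V_i\times V_j$ by its edge count and the shape of its minimum vertex cover, shows that almost all such triples fall into the ``seven edges, one centre in each side'' class $\mathcal{T}_7^1$, and from these centres builds an auxiliary graph $G$ on $V'$. A density bound on $G$ then yields a set $C$ of size about $m$, meeting each tile in one vertex, whose complement is $\mathcal{Y}$-free and hence witnesses extremality. Your one-line description does not capture this centre-graph mechanism, which is the crux of getting the extremal constant to come out linear in $\gamma_3$.
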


\begin{proof}[Proof of Theorem \ref{lemNE}]
Given $\beta>0$, let $\r = \min \{\frac{\beta}{4101}, 10^{-14}\}$. We choose $n_{\ref{lemNE}} = \max\{n_{\ref{lemA}}, 2n_{\ref{lemR}}, 2n_{\ref{lemP}}, 192(p + 1)/(\r/3)^{9}\}$, where $p$ is the constant returned from Lemma \ref{lemP} with $\r_3 = 2\r$ and $\a =  (\r/3)^{3}/2$. Let $n> n_{\ref{lemNE}}$ be an even integer.

Suppose that $\h = (V, E)$ is a 3-graph on $n$ vertices with $\delta_1(\h)\ge \left(\frac7{16} - \r \right)\binom n {2}$. Since $\frac7{16} - \r >\frac{13}{32}$, we can apply Lemma~\ref{lemA} with $\r_1= \r/3$ and obtain an absorbing path $\mathcal P_0$ with ends $a_0, b_0$. We next apply Lemma~\ref{lemR} with $\r_2= (\r/3)^{3}/2$ to $\h[(V\setminus V(\mathcal P_0))\cup \{a_0, b_0\}]$ and obtain a reservoir $R$. Let $V' = V\setminus (V(\mathcal P_0)\cup R)$ and $n' = |V'|$. Note that $n - n' \le \r_1 n +\r_2 n < \r n/2$. The induced subhypergraph $\h' = \h[V']$ satisfies
\[
\delta_1(\h')\ge \left(\frac7{16} -\r\right)\binom {n} 2 -\frac{\r}{2} n \cdot (n-2) > \left(\frac7{16} - 2\r\right)\binom {n'} 2.
\]
Applying Lemma \ref{lemP} to $\h'$ with $\r_3 = 2\r$ and $\a = (\r/3)^{3}/2$, we obtain at most $p$ vertex disjoint loose paths that cover all but at most $\a n'$ vertices of $\h'$, unless $\h'$ is $2050\r_3$-extremal.
In the latter case, there exists $B'\subseteq V'$ such that $|B'|=\lfloor\frac34 n'\rfloor$ and $e(B')\le 4100\r (n')^3$. Then we add $\lfloor\frac34 n\rfloor - \lfloor\frac34 n'\rfloor < \r n/2$ arbitrary vertices from $V\setminus B'$ to $B'$ to get a vertex set $B$ such that $|B| = \lfloor\frac34 n\rfloor$ and
\[
e(B)\le 4100\r (n')^3 + \frac{\r n}{2} \binom{n-1}{2} <4101 \r n^3 \le \beta n^3,
\]
which means that $\h$ is $\beta$-extremal, a contradiction. In the former case, denote these loose paths by $\{\mathcal P_i\}_{i\in [p']}$ for some $p'\le p$, and their ends by $\{a_i, b_i\}_{i\in [p']}$. The choice of $n_{\ref{lemNE}}$ guarantees that $p'+1\le p+1\le {\r_2^3}n/{24}$. We can thus connect $\{a_i, b_{i+1}\}_{0\le i\le p'-1}\cup \{a_{p'}, b_0\}$ by using vertices from $R$ obtaining a loose cycle $C$. Since $|V\setminus C|\le |R|+\a n' \le \r_2 n + \r_2 n' \le \r_1^{3}n$, we can use $\mathcal P_0$ to absorb all unused vertices in $R$ and uncovered vertices in $V'$.
\end{proof}

The rest of this section is devoted to the proof of Lemma \ref{lemP}.

\subsection{Proof of Lemma \ref{lemP}}

Following the approach in \cite{BHS}, we will use the weak regularity lemma which is a straightforward extension of Szemer\'edi's regularity lemma for graphs \cite{Sze}. Below we only state this lemma for 3-graphs.

Let $\h = (V, E)$ be a 3-graph and let $A_1, A_2, A_3$ be mutually disjoint non-empty subsets of $V$. We define $e(A_1, A_2, A_3)$ to be the number of edges with one vertex in each $A_i$, $i\in [3]$, and the density of $\h$ with respect to ($A_1, A_2, A_3$) as
\[
d(A_1, A_2, A_3) = \frac{e(A_1, A_2, A_3)}{|A_1||A_2||A_3|}.
\]
Given $\e>0$, the triple $(V_1, V_2, V_3)$ of mutually disjoint subsets $V_1, V_2, V_3\subseteq V$ is called \emph{$\e$-regular} if
\[
|d(A_1, A_2, A_3) - d(V_1,  V_2, V_3)|\le \e
\]
for all triple of subsets of $A_i\subseteq V_i$, $i\in [3]$, satisfying $|A_i|\ge \e |V_i|$. We say ($V_1, V_2, V_3$) is \emph{($\e, d$)-regular} if it is $\e$-regular and $d(V_{1}, V_{2}, V_{3})\ge d$ for some $d\ge 0$. It is immediate from the definition that in an $(\e,d)$-regular triple ($V_1, V_2, V_3$), if $V_i'\subseteq V_i$ has size $|V_i'| \ge c|V_i|$ for some $c\ge \e$, then ($V_1', V_2', V_3'$) is $(\max\{\e/c,2\e\},d-\e)$-regular.

\begin{theorem}\cite[Theorem 14]{BHS}\label{thmReg}
For any $t_0\ge 0$ and $\e>0$, there exist $T_0$ and $n_0$ so that for every 3-graph $\h = (V, E)$ on $n>n_0$ vertices, there exists a partition $V = V_0 \dot \cup V_1 \dot \cup \cdots \dot\cup V_t$ such that
\begin{enumerate}[{\rm (i)}]
\item $t_0\le t\le T_0$,
\item $|V_1| = |V_2| = \dots = |V_t|$ and $|V_0|\le \e n$,
\item for all but at most $\e \binom t3$ sets $i_1 i_2 i_3\in \binom{[t]}{3}$, the triple $(V_{i_1}, V_{i_2}, V_{i_3})$ is $\e$-regular.
\end{enumerate}
\end{theorem}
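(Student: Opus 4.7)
The plan is to apply the weak regularity lemma (Theorem \ref{thmReg}) and reduce the task to finding a near-perfect tiling of the \emph{cluster} $3$-graph by copies of the $2$-edge loose path $\Y=([4],\{123,234\})$ announced in the introduction. First I would fix constants $\e\ll d\ll \r_3,\a$ and apply Theorem \ref{thmReg} to $\h$ with parameter $\e$ and $t_0$ large enough in terms of $\e$, producing a partition $V=V_0\dot\cup V_1\dot\cup\cdots\dot\cup V_t$ with $|V_0|\le\e n$ and equal clusters of size $m$. Define the cluster $3$-graph $\K$ on $[t]$ by letting $\{i_1,i_2,i_3\}\in E(\K)$ whenever $(V_{i_1},V_{i_2},V_{i_3})$ is $(\e,d)$-regular. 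A standard double count (fix a cluster $V_i$, sum edges of $\h$ with one endpoint in $V_i$ and two outside, then subtract losses from irregular triples, low-density triples and triples meeting $V_0$) shows $\K$ inherits the minimum vertex degree condition up to a small additive loss,
\[
\delta_1(\K)\ge\Bigl(\tfrac{7}{16}-\r_3-\xi\Bigr)\binom{t}{2},
\]
with $\xi=\xi(\e,d)\to 0$ as $\e,d\to 0$.

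\textbf{Key combinatorial step.} The core of the proof is to show that any $3$-graph $\K$ on $t$ vertices with $\delta_1(\K)\ge(\tfrac{7}{16}-\eta)\binom{t}{2}$ either contains a $\Y$-tiling covering all but $\mu t$ vertices, or admits a set $B\subseteq V(\K)$ of size $\lfloor 3t/4\rfloor$ with $e_\K(B)\le 1025\eta\, t^3$; transferring back to $\h$ (using $|V_i|=m$ and adding $V_0$ plus at most $\lfloor 3n/4\rfloor-\lfloor 3t/4\rfloor m$ padding vertices) yields the stated $2050\r_3$-extremality. I would prove the combinatorial dichotomy by a stability-type argument: take a maximum $\Y$-tiling $\M$ with uncovered set $W$, assume $|W|>\mu t$, and use the minimum-degree hypothesis to force many edges between $W$ and $V(\M)$; a case analysis on the link $3$-graph of $W$-vertices should either augment $\M$ (contradicting maximality, using that swapping at most two $\Y$'s out of $\M$ can free three vertices that together with a $W$-vertex form a new $\Y$) or identify a subset of $W\cup V(\M)$ of size $\lfloor 3t/4\rfloor$ in which the edge density is $O(\eta)$, namely the extremal configuration.

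\textbf{From cluster tiling to loose paths.} In the non-extremal case, each $\Y\in\M$ corresponds to four clusters $V_{i_1},V_{i_2},V_{i_3},V_{i_4}$ for which both $(V_{i_1},V_{i_2},V_{i_3})$ and $(V_{i_2},V_{i_3},V_{i_4})$ are $(\e,d)$-regular. In each regular triple I would greedily construct a long loose path: repeatedly pick an edge $xyz$ with $x$ in the ``outer'' cluster and $y,z$ in the two ``inner'' clusters, where successive edges share their last vertex with the next edge's first; by $(\e,d)$-regularity applied to the shrinking remainders of the three clusters, this greedy step succeeds until only an $O(\e)$-fraction of each cluster is unused. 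Joining the two loose paths built in the two regular triples along a common vertex in $V_{i_2}$ or $V_{i_3}$ yields one loose path covering all but $O(\e)m$ vertices of $V_{i_1}\cup\cdots\cup V_{i_4}$. Summing over the at most $t/4$ copies of $\Y$ produces at most $p:=t/4+1$ vertex-disjoint loose paths, and the uncovered vertices (those in $V_0$, in clusters missed by the $\Y$-tiling, and the small leftover in each regular triple) total at most $(\e+\mu+O(\e))n<\a n$ provided $\e,d,\mu$ were chosen small enough relative to $\a$.

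\textbf{Main obstacle.} The decisive difficulty is the $\Y$-tiling stability statement at the sharp threshold $\tfrac{7}{16}$: the extremal construction of Proposition \ref{factcounter} shows that this threshold cannot be relaxed, so one must extract a near-independent set of size $\lfloor 3t/4\rfloor$ whenever the greedy/augmenting argument stalls, and must do so with linear-in-$\eta$ control on the edge count of $B$ in order to recover the constant $2050\r_3$. The remaining ingredients — deriving the cluster degree bound from the regularity partition and converting regular triples into long loose paths — are standard consequences of Theorem \ref{thmReg} and of the defining property of $(\e,d)$-regular triples.
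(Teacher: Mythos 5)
Your proposal does not address the statement you were asked to prove. Theorem~\ref{thmReg} is the \emph{weak regularity lemma for $3$-graphs} (the straightforward extension of Szemer\'edi's graph regularity lemma to $3$-uniform hypergraphs, cited in this paper from \cite[Theorem~14]{BHS} without proof). A proof of it would proceed along the lines of the standard energy-increment argument: define the mean-square density (index) of a partition, show that if more than $\e\binom{t}{3}$ triples of parts are $\e$-irregular then one can refine each part so that the index increases by at least some fixed amount $\e^5/2$ (say), and since the index is bounded above by $1$ this refinement process terminates after at most $O(\e^{-5})$ steps, giving the bound $T_0$. One then equalizes part sizes and dumps the surplus into $V_0$.

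What you have written instead is a sketch of the proof of Lemma~\ref{lemP} (the Path-tiling Lemma): you \emph{invoke} Theorem~\ref{thmReg} in your first sentence rather than prove it, then derive the cluster-hypergraph degree bound (that is Corollary~\ref{prop15}), then state and outline the $\Y$-tiling dichotomy (that is Lemma~\ref{lem:Y}), and finally convert regular triples into long loose paths (that is Proposition~\ref{prop25}). All of this is downstream of the statement in question, not a proof of it. So there is a complete mismatch between the target and what you proved, independent of whether your path-tiling sketch itself is sound.
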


A partition as given in Theorem \ref{thmReg} is called an \emph{$(\e,t)$-regular partition} of $\h$. For an $(\e, t)$-regular partition of $\h$ and $d\ge 0$ we refer to $\Q = (V_i)_{i\in [t]}$ as the family of {\em clusters} and define the {\em cluster hypergraph} $\K = \K(\e,d,\Q)$ with vertex set $[t]$ and $i_1 i_2 i_3\in \binom{[t]}{3}$ is an edge if and only if $(V_{i_1}, V_{i_2}, V_{i_3})$ is $(\e,d)$-regular.

The following corollary shows that the cluster hypergraph inherits the minimum degree of the original hypergraph. Its proof is the same as that of \cite[Proposition 15]{BHS} after we replace $7/16 + \gamma$ by $c$ (we thus omit the proof).

\begin{corollary}\label{prop15}
For $c>d>\e>0$ and $t_0\ge 0$ there exist $T_0$ and $n_0$ such that the following holds. Suppose $\h$ is a 3-graph on $n>n_0$ vertices which has minimum vertex degree $\delta_1(\h)\ge c\binom n2$. Then there exists an $(\e,t)$-regular partition $\Q$ with $t_0<t<T_0$ such that the cluster hypergraph $\K = \K(\e,d,\Q)$ has minimum vertex degree $\delta_1(\K)\ge (c - \e - d)\binom t2$.
\end{corollary}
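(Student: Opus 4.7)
The plan is to apply Theorem~\ref{thmReg} with a regularity parameter $\e_1 \ll \e$, then absorb into $V_0$ any clusters that participate in disproportionately many non-regular triples, and finally derive the minimum degree of $\K$ by counting edges through one cluster in two ways. The main obstacle is that Theorem~\ref{thmReg} only bounds the \emph{total} number of non-regular triples, while the per-cluster conclusion $\delta_1(\K) \ge (c-\e-d)\binom{t}{2}$ requires every surviving cluster to lie in at most $O(\e)\binom{t}{2}$ non-regular triples, so the absorption step is essential.

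I would choose $\e_1 = \e^2/100$ and $t_0' = \max\{t_0,\,100/\e\}$, and apply Theorem~\ref{thmReg} with parameters $(\e_1, t_0')$ to obtain a partition $V = V_0' \dot\cup V_1' \dot\cup \cdots \dot\cup V_{t'}'$. Call a cluster $V_i'$ \emph{bad} if it lies in more than $(\e/2)\binom{t'-1}{2}$ non-$\e_1$-regular triples. Since each non-regular triple is incident to three clusters,
\[
|\mathrm{bad}|\cdot \tfrac{\e}{2}\tbinom{t'-1}{2} \le 3\e_1\tbinom{t'}{3} = \e_1 t'\tbinom{t'-1}{2},
\]
so $|\mathrm{bad}| \le 2\e_1 t'/\e = \e t'/50$. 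I move all bad clusters into $V_0'$, producing a new partition $V = V_0 \dot\cup V_1 \dot\cup \cdots \dot\cup V_t$ with $t \ge (1-\e/50)t'$, unchanged cluster size $N$, and $|V_0| \le \e_1 n + (\e/50)n \le \e n$. Because $\e_1 \le \e$ implies every $\e_1$-regular triple is $\e$-regular, and the number of non-$\e$-regular triples among surviving clusters is at most $\e_1\binom{t'}{3} \le \e\binom{t}{3}$, the result is an $(\e,t)$-regular partition in the sense of Theorem~\ref{thmReg}.

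Now fix a surviving cluster $V_i$ and a vertex $v \in V_i$; let $d_\ast(v)$ be the number of edges $\{v,u,w\}\in \h$ with $u\in V_j$, $w\in V_k$ for distinct $j,k\in [t]\setminus\{i\}$. The edges through $v$ not counted by $d_\ast(v)$ are those hitting $V_0\cup V_i$ or having both other endpoints in a single cluster, and their total is at most
\[
|V_0|(n-1) + (N-1)(n-1) + (t-1)\tbinom{N}{2} \le \e n^2/20,
\]
once $t_0'$ is large enough. Hence $d_\ast(v) \ge c\binom{n}{2} - \e n^2/20$. Summing over $v\in V_i$ and using $\sum_v d_\ast(v) = N^3\sum_{\{j,k\}} d(V_i,V_j,V_k)$ together with $\binom{n}{2}/N^2 = (1+O(\e))\binom{t}{2}$ yields
\[
\sum_{\{j,k\}} d(V_i,V_j,V_k) \ge (c - \e/5)\tbinom{t}{2} - o(1).
\]
Splitting the left-hand sum by whether $\{i,j,k\}\in E(\K)$, is non-$\e$-regular, or is regular with density below $d$, and bounding each part by $1$, $1$, or $d$ respectively, gives the upper bound $\deg_\K(i) + b_i + d\binom{t-1}{2}$, where $b_i \le (\e/2)\binom{t-1}{2}(1+O(\e))$ since $V_i$ is good. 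Rearranging and absorbing the $O(\e^2)$ and $o(1)$ terms into $\e$ yields $\deg_\K(i) \ge (c - d - \e)\binom{t}{2}$ for every surviving $i$, as required.
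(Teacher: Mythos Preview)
Your proof is correct and follows the standard route for showing that cluster hypergraphs inherit minimum degree conditions; the paper itself omits the argument entirely, referring to \cite[Proposition 15]{BHS}, so there is nothing to compare against beyond saying that your line of reasoning is the expected one. In particular, your observation that the absorption step (moving ``bad'' clusters into $V_0$) is essential is exactly right: Theorem~\ref{thmReg} only controls the \emph{total} number of irregular triples, and without pruning one could have a cluster sitting in $\Theta(t^2)$ irregular triples, destroying the per-cluster degree bound.

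One very minor bookkeeping issue: with $t_0'=\max\{t_0,100/\e\}$ you only get $t\ge(1-\e/50)t_0'$, which need not exceed $t_0$ when $t_0>100/\e$. Taking instead $t_0'=\max\{2t_0,100/\e\}$ (or anything with a safety factor larger than $1/(1-\e/50)$) repairs this, and all your arithmetic estimates---the $\e n^2/20$ bound on uncounted edges, the comparison $\binom{n}{2}/N^2\ge\binom{t}{2}$, and the final rearrangement---then go through exactly as written.
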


In 3-graphs, a loose path is $3$-partite with partition sizes \emph{about} $m, m, 2m$ for some integer $m$. Proposition~\ref{prop25} below shows that every regular triple with partition sizes $m, m, 2m$ contains an almost spanning loose path as a subhypergraph. In contrast, \cite[Proposition 25]{BHS} (more generally \cite[Lemma 20]{HS}) shows that every regular triple with partition sizes $3m, 3m, 2m$ contains constant many vertex disjoint loose paths. The proof of Proposition~\ref{prop25} uses the standard approach of handling regularity.

\begin{proposition}\label{prop25}
Fix any $\e>0$, $d> 2\e$, and an integer $m \ge \frac d{\e(d-2\e)}$. Suppose that $V(\h)= V_1\cup V_2\cup V_3$ and $(V_1, V_2, V_3)$ is $(\e,d)$-regular with $|V_i| = m$ for $i = 1,3$ and $|V_2| = 2m$. Then there is a loose path $P$ omitting at most $8\e m/d+3$ vertices of $\h$.
\end{proposition}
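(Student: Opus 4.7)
The plan is to build the loose path greedily, extending it one edge at a time from a single endpoint, while maintaining the invariant that the current endpoint has sufficiently many edges into the unused portions of the other two classes. Write $V_j'$ for $V_j \setminus V(P)$, where $P$ is the current loose path. Recall that a loose path with $k$ edges is $3$-partite with $k$ vertices from $V_2$ and $k+1$ from $V_1 \cup V_3$ (alternating between the two classes), so $|V(P)|=2k+1$, and the goal is to push $k$ close to $2m$ so that at most $O(\epsilon m/d)$ vertices are omitted.

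The key fact I will invoke repeatedly is that, provided $|V_2'|\ge 2\epsilon m$ and $|V_{\bar j}'|\ge \epsilon m$, the bad set
\[
B_j = \bigl\{w \in V_j : \deg(w, V_2', V_{\bar j}') < (d-2\epsilon)|V_2'||V_{\bar j}'|\bigr\}
\]
satisfies $|B_j| < \epsilon m$; otherwise $(B_j, V_2', V_{\bar j}')$ would contradict the $(\epsilon,d)$-regularity of $(V_1,V_2,V_3)$. The invariant I maintain is that the current endpoint never lies in its $B_j$. An initial edge $\{x_1,y,x_3\}$ with $x_1\notin B_1$ and $x_3\notin B_3$ at time $0$ exists by a simple edge count, since edges incident to a bad endpoint contribute only $O(\epsilon m^3)$ out of the $\Theta(m^3)$ total.

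For the extension step, suppose the current endpoint on the $V_3$-side is $w\notin B_3$, so $w$ has at least $(d-2\epsilon)|V_2'||V_1'|$ extending triples $(u,v) \in V_2' \times V_1'$ with $\{w,u,v\} \in E(\h)$. I must pick such a pair with $v$ not in the analogous bad set for the next step, which is defined relative to $V_2'\setminus\{u\}$ and $V_3'$. Since this next-step bad set has size less than $\epsilon m$ uniformly in $u$ by the same regularity argument, double-counting over $u \in V_2'$ gives at least $(d-2\epsilon)|V_2'||V_1'| - |V_2'|\cdot\epsilon m$ ``good'' extending triples, which is positive as soon as $|V_1'| > \epsilon m/(d-2\epsilon)$. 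So extension succeeds as long as $|V_1'|, |V_3'|$ exceed the threshold $T := \epsilon m/(d-2\epsilon) \le 2\epsilon m/d$ and $|V_2'|\ge 2\epsilon m$. After extending, the new endpoint is good by construction, so the invariant is preserved.

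When the greedy process halts, one of these thresholds must fail. A short calculation using $|V_2'| = 2m-k$ and $|V_1'|+|V_3'| = 2m-k-1$ shows that in each stopping case the total unused count $4m - 2k - 1$ is at most $8\epsilon m/d + 3$. The only subtle point, and the main obstacle, is the $u$-dependence of the bad set at time $t+1$: because the bad set shifts with the choice of $u$, one cannot simply fix a single canonical bad set to avoid. The resolution is exactly the uniform-in-$u$ size bound plus the double count above. The hypothesis $m \ge d/(\epsilon(d-2\epsilon))$ enters precisely to guarantee that $T \ge 1$ and that the regularity inequalities applied to shrinking subsets remain nondegenerate throughout the process.
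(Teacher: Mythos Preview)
Your greedy construction is exactly the paper's approach: extend the loose path one edge at a time, maintaining that the current endpoint has high degree into the unused portions of the other two classes, and stop when one class drops below a regularity threshold. The logic is correct, including your handling of the $u$-dependent bad set via a uniform size bound and a double count.

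There is, however, a quantitative slip. Your assertion that $T := \epsilon m/(d-2\epsilon) \le 2\epsilon m/d$ is equivalent to $d \ge 4\epsilon$, which is not part of the hypothesis (only $d > 2\epsilon$ is assumed). Hence the final count you actually obtain, $4T + 3 = 4\epsilon m/(d-2\epsilon) + 3$, can exceed $8\epsilon m/d + 3$. The fix is easy: define the bad set with density cutoff $(d-\epsilon)$ instead of $(d-2\epsilon)$. The same regularity argument still gives $|B_j| < \epsilon m$, the extension condition becomes $(d-\epsilon)|V_1'| > \epsilon m$, i.e.\ $|V_1'| > T' := \epsilon m/(d-\epsilon)$, and now $4T' \le 8\epsilon m/d$ is equivalent to $d \ge 2\epsilon$, which holds. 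This is precisely what the paper does: it maintains the invariant $\deg(v_{2i+1}, U_2^i U_r^i) \ge (d-\epsilon)|U_2^i||U_r^i|$ and runs until some $|U_j^i| < \tfrac{2\epsilon}{d}|V_j|$.

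Your description of where the hypothesis $m \ge d/(\epsilon(d-2\epsilon))$ enters is also off: it is not needed to make $T \ge 1$, but to absorb the loss of a single vertex from $V_2'$ between steps. In the paper's version this shows up as the requirement $|U_2^{i-1}| \ge \tfrac{2\epsilon}{d}|V_2| \ge \tfrac{4}{d-2\epsilon}$, which guarantees the ``$-1$'' term does not kill positivity of the count of good extensions.
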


\begin{proof}
We will greedily construct the loose path $P=v_1v_2\cdots v_{2k+1}$ such that $v_{2i}\in V_2$, $v_{4i+1}\in V_1$ and $v_{4i+3}\in V_3$ until $|V_i\setminus V(P)|<\frac{2\e}{d}|V_i|$ for some $i\in [3]$. For $j\in [3]$, let $U_j^0=V_j$ and $U_j^i=V_j\setminus \{ v_1,\dots, v_{2i-1} \}$ for $i\in [k]$. In addition, we require that for $i=0, \dots, k$,
\begin{equation}\label{eq:hyp}
\deg(v_{2i+1}, U_2^i U_r^i)\ge (d-\e)|U_2^i| |U_r^i|,
\end{equation}
where $r\equiv 2i-1 \mod 4$.
We proceed by induction on $i$. First we pick a vertex $v_1\in V_1$ such that $\deg(v_1, V_2 V_3)\ge (d-\e)|V_2||V_3|$ (thus \eqref{eq:hyp} holds for $i=0$). By regularity, all but at most $\e|V_1|$ vertices can be chosen as $v_1$. Suppose that we have selected $v_1, \dots, v_{2i-1}$. Without loss of generality, assume that $v_{2i-1}\in V_1$. Our goal is to choose $v_{2i}\in U_2^i, v_{2i+1}\in U_3^i$ such that
\begin{itemize}
  \item[(i)] $v_{2i-1} v_{2i} v_{2i+1}\in E(\h)$,
  \item[(ii)] $\deg(v_{2i+1}, U_1^i U_2^i)\ge (d-\e)|U_1^i| |U_2^i|$.
\end{itemize}

In fact, the induction hypothesis implies that $\deg(v_{2i-1}, U_2^{i-1} U_3^{i-1})\ge (d-\e)|U_2^{i-1}||U_3^{i-1}|$. Since $U_2^i = U_2^{i-1}\setminus \{v_{2i-2} \}$ and $U_3^i= U_3^{i-1}$, we have
\[
\deg(v_{2i-1}, U_2^{i} U_3^{i})\ge (d-\e)|U_2^{i-1}||U_3^{i-1}| - |U_3^{i-1}|=((d-\e)|U_2^{i-1}|-1)|U_3^{i-1}|.
\]
By regularity, at most $\e |V_3|$ vertices in $V_3$ does not satisfy (ii). So, at least
\begin{equation}
\label{eq:U23}
\deg(v_{2i-1}, U_2^{i} U_3^{i}) -\e |V_3|\cdot |U_2^{i-1}|\ge ((d-\e)|U_2^{i-1}|-1)|U_3^{i-1}|-\e |V_3|\cdot |U_2^{i-1}|
\end{equation}
pairs of vertices can be chosen as $v_{2i}, v_{2i+1}$. Since $|U_3^{i-1}|\ge \frac{2\e}{d}|V_3|$ and $|U_2^{i-1}|\ge \frac{2\e}{d}|V_2| \ge \frac4{d-2\e}$ (using $m \ge \frac d{\e(d-2\e)}$), the right side of \eqref{eq:U23} is at least
\[
\Big((d-\e)|U_2^{i-1}|-1\Big) \frac{2\e}{d}|V_3|-\e |V_3|\cdot |U_2^{i-1}| = \Big((d-2\e)|U_2^{i-1}| - 2\Big)\frac{\e}{d}|V_3|  >0,
\]
thus the selection of $v_{2i}, v_{2i+1}$ satisfying (i) and (ii) is guaranteed.

To calculate the number of the vertices omitted by $P=v_1v_2\cdots v_{2k+1}$, note that $|V_1\cap V(P)|= \lceil\frac{k+1}2\rceil$, $|V_2\cap V(P)|= k$, and $|V_3\cap V(P)|= \lfloor\frac{k+1}2\rfloor$. Our greedy construction of $P$ stops as soon as $|V_i\setminus V(P)|<\frac{2\e}{d}|V_i|$ for some $i\in [3]$. As $|V_1|=|V_3|= m= |V_2|/2$, one of the following three inequalities holds:
\[
m - \left\lceil\frac{k+1}2 \right\rceil < \frac{2\e}{d} m, \quad 2m - k < \frac{2\e}{d} 2m, \quad m - \left\lfloor\frac{k+1}2 \right\rfloor < \frac{2\e}{d} m.
\]
Thus we always have $m - \left\lceil\frac{k+1}2 \right\rceil < \frac{2\e}{d} m$, which implies that
$\frac{k+2}2 > \left(1- \frac{2\e}{d} \right)m$ or $k> 2 \left(1-\frac{2\e}{d}\right)m-2$.
Consequently,
\[
|V(\h)\setminus V(P)|= 4m - (2k+1) < 4m - \left(4 \left(1-\frac{2\e}{d} \right)m-4 + 1 \right)=\frac{8\e}{d} m +3. \qedhere
\]
\end{proof}

\medskip
Let $\Y$ be the 3-graph on the vertex set $[4]$ with edges $123,234$ (the unique 3-graph with four vertices and two edges). The following lemma is the main step in our proof of Lemma \ref{lemP}. In general, given two (hyper)graphs $\F$ and $\G$, an \emph{$\F$-tiling} is a sub(hyper)graph of $\G$ that consists of vertex disjoint copies of $\F$. The $\F$-tiling is \emph{perfect} if it is a spanning sub(hyper)graph of $\G$.

\begin{lemma}[$\Y$-tiling Lemma] \label{lem:Y}
For any $\r >0$, there exists an integer $n_{\ref{lem:Y}}$ such that the following holds. Suppose $\h$ is a 3-graph on $n>n_{\ref{lem:Y}}$ vertices with
\[
\delta_1(\h)\ge \left(\frac7{16} - \r \right)\binom n {2},
\]
then there is a $\Y$-tiling covering all but at most $2^{19}/\r$ vertices of $\h$ unless $\h$ is $2^{10} \r$-extremal.
\end{lemma}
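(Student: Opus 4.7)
\medskip
\noindent
\textbf{Proof plan for Lemma \ref{lem:Y}.}
The plan is to take a maximum $\Y$-tiling $\mathcal T$ in $\h$, set $U = V(\h)\setminus V(\mathcal T)$ and $L = V(\mathcal T)$, and assume for contradiction that $u := |U| > 2^{19}/\r$. The goal is then to force $\h$ to be $2^{10}\r$-extremal. For each $\Y_i\in\mathcal T$ on $\{v_1^i,v_2^i,v_3^i,v_4^i\}$ with edges $v_1^iv_2^iv_3^i$ and $v_2^iv_3^iv_4^i$, call $v_2^i,v_3^i$ the \emph{hubs} and $v_1^i,v_4^i$ the \emph{leaves}; this distinction will be crucial since deleting a hub kills both edges of $\Y_i$ while deleting a leaf keeps the other edge intact as a template to rebuild upon.

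The first step is straightforward: by maximality, $\h[U]$ contains no copy of $\Y$, so $\h[U]$ is a linear 3-graph and $e_\h(U)\le\binom{u}{2}/3\le u^2/6$. The second and main step is a \emph{local augmentation analysis}. For any edge $\{x,y,w\}\in E(\h)$ with $x,y\in U$ and $w$ a leaf of some $\Y_i$, together with a second vertex $z\in U$ such that $\{x,y,w,z\}$ spans a copy of $\Y$, one can remove $\Y_i$ and insert this new copy; this operation is coverage-neutral but leaves a triple $T_i\subseteq V(\Y_i)$ still containing one original edge of $\Y_i$, which can be extended into a brand-new copy of $\Y$ provided one further vertex of $U$ has the right link. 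Chaining such swaps (and analogous ones where a single pair $\{x,y\}\subseteq U$ has two common neighbors in $L$) yields a strict increase of $|\mathcal T|$, contradicting maximality. Ruling out these augmentations forces that for all but $O(u^2/n)$ pairs $\{x,y\}\subseteq U$, all common neighbors $\deg_\h(x,y)$ lie in a small set $A^\ast$ of \emph{hub-type} vertices, and similarly that $e_2$ and $e_3$ (the number of edges with at least two vertices in $U$) are small outside of a contribution through $A^\ast$.

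For the third step I plan to use the vertex-degree condition. Summing $\deg_\h(v)\ge (7/16-\r)\binom{n}{2}$ over $v\in U$ gives
\[
3e_3+2e_2+e_1\ \ge\ u\Bigl(\tfrac{7}{16}-\r\Bigr)\binom{n}{2},
\]
where $e_i$ counts edges with exactly $i$ vertices in $U$. Since Steps 1--2 give $e_3\le u^2/6$ and bound $e_2$ by $O(\r n^2)$ outside contributions through $A^\ast$, the inequality forces both a lower bound of the form $|A^\ast|\ge(\tfrac14-O(\r))n$ and that essentially all edges meeting $U$ must use $A^\ast$. Taking $A\supseteq A^\ast$ with $|A|=\lceil n/4\rceil$ (padding greedily if needed) and $B=V(\h)\setminus A$, the fourth step is to verify
\[
e_\h(B)\ \le\ 2^{10}\r\, n^3
\]
by tracking the deficit between $\sum_{v\in B}\deg_\h(v)$ and the count of edges meeting $A$, which yields $\beta$-extremality for $\beta=2^{10}\r$.

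The main obstacle is the augmentation analysis of Step 2. While single-swap augmentations are coverage-neutral and hence do not immediately contradict maximality, the real improvements come from \emph{two-stage} swaps (remove one $\Y_i$, insert two new copies using four vertices of $U$) and from \emph{chained} swaps across several $\Y_i$'s linked through shared uncovered vertices. Producing such augmentations whenever the structural restrictions above fail requires a careful case analysis split by leaf/hub roles and by how many vertices of $U$ land in the relevant pairs' link sets; it is this bookkeeping, driven by the threshold $7/16$, that pins the extremal parameter to $2^{10}\r$ and ultimately yields the constant bound $2^{19}/\r$ on $|U|$.
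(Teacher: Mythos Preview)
Your overall framework---fix a maximum $\Y$-tiling, study the leftover set $U$, and use augmentation arguments to pin down a covering set of size about $n/4$---matches the paper's. But the technical engine you propose is different from the paper's, and it has a genuine gap. The paper does \emph{not} use the hub/leaf distinction at all. Instead, for each $u\in U$ and each \emph{pair} of copies $\Y_i,\Y_j$, it studies the bipartite link graph $L_{i,j}(u)$ on $V_i\times V_j$ and classifies triples $uij$ by edge count and vertex-cover structure (via K\"onig--Egerv\'ary). The dominant class $\T_7^1$ has $|L_{i,j}(u)|=7$ with a vertex cover $\{c_i,c_j\}$, one vertex in each part; these $c_i$ are the ``centers.'' An auxiliary graph $G$ on $V(\mathcal T)$ records pairs that are centers for many $u$'s, and the key structural fact is that at most one vertex of each $V_i$ has positive degree in $G$. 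This is what pins the covering set $C$ to size at most $m\le n/4$.

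Your proposal asserts $|A^\ast|\ge(\tfrac14-O(\r))n$ from the degree sum and then writes ``Taking $A\supseteq A^\ast$ with $|A|=\lceil n/4\rceil$ (padding greedily if needed),'' which silently assumes the \emph{upper} bound $|A^\ast|\le\lceil n/4\rceil$. That is exactly the hard direction, and your hub/leaf scheme does not give it: each $\Y_i$ has \emph{two} hubs, so even if your single-copy swap analysis forced common neighbours of $U$-pairs to be hubs, you would only get $|A^\ast|\le 2m\approx n/2$. Narrowing from two candidates per copy to one is precisely what the paper's pairwise link analysis (and the ``one center per $V_i$'' fact) accomplishes, and there is no substitute for it in your plan. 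Relatedly, your Step~2 (``two-stage'' and ``chained'' swaps) is too vague to carry the weight you place on it; the paper's augmentations all live at the level of two or three $\Y$-copies simultaneously and rely on matchings or vertex covers in $L_{i,j}(u)$, not on the leaf/hub role of a single vertex. I would recommend abandoning the hub/leaf viewpoint and instead analysing the link of $u\in U$ into pairs $V_i,V_j$.
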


Now we are ready to prove Lemma \ref{lemP} using the same approach as in \cite{BHS}.
\begin{proof}[Proof of Lemma \ref{lemP}]
Given $0<\r_3, \a<1$, let $n_{\ref{lemP}} = \max\{n_0, 4T_0/\e \}$ and $p = T_0/2$, where $T_0$ and $n_0$ are the constants returned from Corollary \ref{prop15} with $c=\frac 7{16} - \r_3$, $d= \r_3 /2$, $\e=\frac{\a d}{8+\a}$, and $t_0 = \max\{ n_{\ref{lem:Y}}, \frac{2^{20}}{\r_3\a}\}$.

Suppose that $\h$ is a 3-graph on $n>n_{\ref{lemP}}$ vertices with $\delta_1(\h) \ge (\frac 7{16} - \r_3) \binom n2$. By applying Corollary \ref{prop15} with the constants chosen above, we obtain an $(\e, t)$-regular partition $\Q$. The cluster hypergraph $\K = \K(\e, d, \Q)$ satisfies $\delta_1(\K)\ge (\frac 7{16} - 2 \r_3) \binom t2$. Let $m$ be the size of each cluster except $V_0$, then $(1-\e)\frac nt \le m\le \frac nt$. By Lemma \ref{lem:Y}, either $\K$ is $2^{10}(2\r_3)$-extremal, or there is a $\Y$-tiling $\mathscr Y$ of $\K$ that covers all but at most $2^{19}/(2\r_3)$ vertices of $\K$. In the first case, there exists a set $B\subseteq V(\K)$ such that $|B| = \lfloor \frac {3t}4 \rfloor$ and $e(B)\le 2^{11}\r_3 t^3 $. Let $B'\subseteq V(\h)$ be the union of the clusters in $B$. By regularity,
\begin{align*}
e(B')\le e(B) \cdot m^3 + \binom t3 \cdot d \cdot m^3 + \e \cdot \binom t3 \cdot m^3 + \binom {m}2 n,
\end{align*}
where the right-hand side bounds the number of edges from regular triples with high density, edges from regular triples with low density, edges from irregular triples and edges that are from at most two clusters. Since $m\le \frac nt$, $\e < d < \r_3$, and $t^{-2}< t_0^{-2}< \r_3$, we get
\[
e(B')\le 2^{11} \r_3 t^3 \left( \frac nt \right)^3 + d \binom t3 \left( \frac nt \right)^3 + \e \binom t3 \left( \frac nt \right)^3 + \binom {n/t}2 n <2049 \r_3 n^3.
\]
Note that $|B'|= \left\lfloor \frac {3t}4 \right\rfloor m \le \frac{3t}4\cdot \frac nt=\frac{3n}4$ implies that $|B'|\le \lfloor \frac {3n}4 \rfloor$. On the other hand,
\[
|B'|= \left\lfloor \frac {3t}4 \right\rfloor m\ge \left( \frac{3t}4-1 \right)(1-\e)\frac nt\ge \left( \frac{3t}4-\e t \right)\frac nt =\frac{3n}4-\e n,
\]
by adding at most $\e n$ vertices from $V\setminus B'$ to $B'$, we get a set $B''\subseteq V(\h)$ of size exactly $\lfloor 3n/4 \rfloor$, with $e(B'')\le e(B') + \e n \cdot n^2<2050 \r_3 n^3$. Hence $\h$ is $2050	 \r_3$-extremal.

In the second case, the union of the clusters covered by $\mathscr Y$ contains all but at most $\frac{2^{19}}{2\r_3} m+|V_0|\le \a n/4+\e n<3\a n/8$ vertices (here we use $t\ge \frac{2^{20}}{\r_3\a}$). We will apply Proposition \ref{prop25} to each member $\Y'\in \mathscr Y$. Suppose that $\Y'$ has the vertex set $[4]$ with edges $123, 234$. For $i\in [4]$, let $V_i$ denote the corresponding cluster in $\h$. We split $V_i$, $i=2,3$, into two disjoint sets $V_i^1$ and $V_i^2$ of equal sizes. Then the triples $(V_1, V_2^1, V_3^1)$ and $(V_4, V_2^2, V_3^2)$ are $(2\e ,d-\e)$-regular and of sizes $m, \frac m{2}, \frac {m}{2}$. Applying Proposition \ref{prop25} to these two triples with $m' = \frac m{2}$, we find a loose path in each triple covering all but at most $\frac{8(2\e)}{d-\e} m'+3= \a m+3$ vertices (here we need $\e=\frac{\a d}{8+\a}$).

Since $|\mathscr Y|\le t/4$, we obtain a path tiling that consists of at most $2 t/4 \le T_0/2 = p$ paths and covers all but at most
\[
2(\a m+3) \frac{t}4+\frac{3\a}8 n \le\frac{\a}2 n +\frac{3t}2 + \frac{3\a}8 n < \a n
\]
vertices. This completes the proof.
\end{proof}

\subsection{Proof of $\Y$-tiling Lemma (Lemma \ref{lem:Y})}
\begin{fact}\label{fact:Y}
Let $\h$ be a 3-graph on $m$ vertices which contains no copy of $\Y$, then $e(\h)\le \frac13 \binom m2$.
\end{fact}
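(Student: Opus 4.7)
The plan is to recognize that a copy of $\Y$ in a 3-graph is nothing but a pair of distinct edges that meet in exactly two vertices, so $\Y$-freeness is equivalent to saying that $\h$ is \emph{linear}: every pair of vertices is contained in at most one edge. Indeed, $\Y$ consists of edges $\{1,2,3\}$ and $\{2,3,4\}$ on four distinct vertices, which intersect in the pair $\{2,3\}$. Conversely, two distinct edges $e_1,e_2$ of any 3-graph satisfy $|e_1\cap e_2|\le 2$, and if $|e_1\cap e_2|=2$ then $e_1\cup e_2$ has four vertices and exactly the two edges $e_1,e_2$, giving a copy of $\Y$. Hence $\h$ is $\Y$-free if and only if no pair of vertices lies in two edges.

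Given linearity, I would conclude by a one-line double count of incidences between edges and 2-element vertex subsets: each edge of $\h$ contributes exactly $\binom{3}{2}=3$ such pairs, while each pair is counted at most once. Therefore
\[
3\,e(\h)\ \le\ \binom{m}{2},
\]
which rearranges to $e(\h)\le \tfrac{1}{3}\binom{m}{2}$, as claimed. There is no real obstacle here; the only thing to be careful about is the equivalence between $\Y$-freeness and linearity, which is immediate once one notes that two distinct 3-edges can share at most two vertices.
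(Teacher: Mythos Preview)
Your proof is correct and is essentially identical to the paper's own argument: the paper observes that $\Y$-freeness forces $\deg(uv)\le 1$ for every pair $u,v$, and then concludes $e(\h)\le \frac13\binom{m}{2}$ by the same double count.
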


\begin{proof}
Since there is no copy of $\Y$, then given any $u,v\in V(\h)$, we have that $\deg(u v)\le 1$, which implies $e(\h)\le \frac13\binom m2 \cdot 1=\frac13\binom m2$.
\end{proof}

\medskip
\begin{proof}[Proof of Lemma \ref{lem:Y}] Fix $\r>0$ and let $n\in \mathbb{N}$ be sufficiently large. Let $\h$ be a 3-graph on $n$ vertices that satisfies $\delta_1(\h)\ge (\frac7{16} - \r )\binom n {2}$. Fix a largest $\Y$-tiling $\mathscr Y=\{\Y_1, \dots, \Y_m\}$ and let $V_i=V(\Y_i)$ for $i\in [m]$. Let $V'=\bigcup_{i\in [m]}V_i$ and $U=V(\h)\setminus V'$. Assume that $|U|>2^{19}/\r$ -- otherwise we are done.

Our goal is to find a set $C$ of vertices in $V'$ of size at most $n/4$ that covers almost all the edges, which implies that $\h$ is extremal.

Let $\A_i$ be the set of all edges with exactly $i$ vertices in $V'$, for $i=0,1,2,3$. Note that $|\A_0| \le \frac13\binom{|U|}2$  by Fact \ref{fact:Y}. We may assume that $|U|< \frac34 n$ and consequently
\begin{equation}\label{eq:Um}
\quad m>\frac{n}{16}.
\end{equation}
Indeed, if $|U|\ge \frac34 n$, then taking $U'\subseteq U$ of size $\lfloor\frac34n\rfloor$, we get that $e(U')\le e(U)\le \frac13\binom{|U|}2\le \frac16 n^2<\r n^3$. Thus $\h$ is $\r$-extremal and we are done.

\begin{claim}\label{clm:A01}
$|\A_1|\le m\binom{|U|}2+12m|U|$.
\end{claim}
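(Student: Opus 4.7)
The plan is to charge each edge of $\A_1$ to the unique copy $\Y_i$ in $\mathscr Y$ containing its vertex in $V'$, and prove the per-copy estimate $N_i \le \binom{|U|}{2} + 12|U|$ for every $i \in [m]$, where $N_i$ denotes the number of edges in $\A_1$ whose $V'$-vertex lies in $V_i$. Summing over $i$ then yields the claim. Fix $\Y_i$ and write $V_i = \{a_1, a_2, a_3, a_4\}$ (with $\Y_i$-edges $a_1a_2a_3$ and $a_2a_3a_4$). For each $S \in \binom{U}{2}$ set $f(S) := |\{a \in V_i : \{a\} \cup S \in E(\h)\}|$, so $N_i = \sum_S f(S)$. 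Using the elementary inequality $f(S)-1 \le \binom{f(S)}{2}$ for $f(S)\ge 2$, I get
\[
N_i \le \binom{|U|}{2} + \sum_{S:\, f(S) \ge 2}(f(S)-1) \le \binom{|U|}{2} + \sum_{\{v, v'\} \subseteq V_i} \bigl|B^{\{v, v'\}}\bigr|,
\]
where $B^{\{v, v'\}} := \{S \in \binom{U}{2} : \{v\}\cup S,\, \{v'\}\cup S \in E(\h)\}$, so the task reduces to bounding $\sum_{\{v,v'\}} |B^{\{v,v'\}}|$ by $12|U|$.

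The key structural input is the maximality of $\mathscr Y$: for each of the three partitions $V_i = P \sqcup P'$ into 2-sets, I claim the families $B^P$ and $B^{P'}$ must be \emph{cross-intersecting} in $\binom{U}{2}$. Indeed, if $\alpha \in B^P$ and $\beta \in B^{P'}$ were disjoint, then $P \cup \alpha$ and $P' \cup \beta$ would each form a copy of $\Y$ (with shared pair $\alpha$ and $\beta$ respectively), and these two copies are vertex-disjoint from each other and from $V_j$ for every $j \ne i$; replacing $\Y_i$ in $\mathscr Y$ by them would enlarge the tiling by one, contradicting maximality. A standard Hilton-type bound for cross-intersecting 2-set families over a ground set of size at least $4$ (guaranteed by $|U| > 2^{19}/\rho$) gives $|B^P| + |B^{P'}| \le 2|U|-2$ provided both are non-empty. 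Summing over the three partitions of $V_i$ then yields $\sum_{\{v,v'\}} |B^{\{v,v'\}}| \le 6|U| - 6 \le 12|U|$, and summing the per-copy bound over $i$ gives the claim.

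The main obstacle lies in the degenerate case in which one of $B^P$, $B^{P'}$ is empty: then the cross-intersecting inequality is vacuous and, a priori, the non-empty family could be as large as $\binom{|U|}{2}$. To rule this out, I plan to invoke the maximality of $\mathscr Y$ in a richer form by considering copies of $\Y$ whose shared pair is \emph{mixed} (one vertex in $V_i$, one in $U$). Concretely, if some $B^{\{v,v'\}}$ were very large while its partition-complement were empty, then for many pairs $\{u_1,u_2\}$ the set $\{v,v',u_1,u_2\}$ spans a $\Y$, and maximality applied to augmentations using a second $\Y$ on $V_i\setminus\{v,v'\}$ plus fresh $U$-vertices (with shared pair of mixed type) should force enough sparsity — in particular an $O(1)$-type bound on $|\{w\in U : vv'w \in E\}|$ and on mixed-type link structures — to cap $|B^{\{v,v'\}}|$ by $O(|U|)$. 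Carrying out this case analysis uniformly over the six pairs $\{v,v'\} \subseteq V_i$ absorbs the $6\to 12$ slack in the constant and completes the proof.
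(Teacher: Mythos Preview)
Your cross-intersecting argument for the non-degenerate case is correct and rather elegant, but the degenerate case is a genuine gap, and your sketch for it does not close. Concretely, suppose $B^{\{a_3,a_4\}}=\emptyset$ while $|B^{\{a_1,a_2\}}|$ is large. Your proposed fix speaks of a ``second $\Y$ on $V_i\setminus\{v,v'\}$ plus fresh $U$-vertices with shared pair of mixed type,'' but a mixed shared pair $\{a_3,u\}$ would require an edge $a_3 a_4 u$, which lies in $\A_2$ and is entirely uncontrolled in this claim; and the only $\A_1$-type copy of $\Y$ on $\{a_3,a_4\}\cup U$ with shared pair in $U$ is exactly what $B^{\{a_3,a_4\}}=\emptyset$ forbids. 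The ``$O(1)$-type bound on $|\{w\in U: vv'w\in E\}|$'' you invoke is likewise about $\A_2$-edges and does nothing to cap $|B^{\{v,v'\}}|$. So as written the degenerate case is not handled, and without it you cannot reach the per-copy bound $\sum_{\{v,v'\}}|B^{\{v,v'\}}|\le 12|U|$.

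The paper's proof sidesteps all of this with a much more direct idea. Set $D=\{v\in V':\deg(v,U)\ge 4|U|\}$. If two vertices $x,y$ of some $V_i$ lay in $D$, then the link graph of $x$ on $U$ (having more than $|U|/2$ edges) contains a path $u_1u_2u_3$, and the link graph of $y$ on $U\setminus\{u_1,u_2,u_3\}$ still has at least $4|U|-3|U|>|U|/2$ edges, hence a path $u_4u_5u_6$; the two disjoint copies of $\Y$ on $\{x,u_1,u_2,u_3\}$ and $\{y,u_4,u_5,u_6\}$ then replace $\Y_i$ and enlarge the tiling, a contradiction. Thus $|D|\le m$, and since $N_i=\sum_{j=1}^4\deg(a_j,U)$ \emph{directly}, one gets
\[
|\A_1|\le |D|\binom{|U|}{2}+|V'\setminus D|\cdot 4|U|\le m\binom{|U|}{2}+3m\cdot 4|U|
\]
(the expression is increasing in $|D|$ because $\binom{|U|}{2}>4|U|$). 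Note that this same ``at most one high-degree vertex per $V_i$'' observation is precisely what would be needed to rescue your degenerate case---and once you have it, the cross-intersecting detour and the lossy step $f(S)-1\le\binom{f(S)}{2}$ become superfluous.
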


\begin{proof}
Let $D$ be the set of vertices $v\in V'$ such that $\deg(v, U)\ge 4|U|$. First observe that every $\Y_i\in \mathscr Y$ contains at most one vertex in $D$. Suppose instead, two vertices $x,y\in V_i$ are both in $D$. Since $\deg(x, U)\ge 4|U|>|U|/2$, the link graph\footnote{Given a 3-graph $\h$ with a vertex $v$, the \emph{link graph} of $v$ has the vertex set $V(\h)\setminus \{v\}$ and edge set $\{e\setminus \{v\}: e\in E(\h)\}$.} of $x$ on $U$ contains a path $u_1 u_2 u_3$ of length two. The link graph of $y$ on $U\setminus\{u_1,u_2,u_3\}$ has size at least $4|U|-3|U|>|U|/2$, so it also contains a path of length two, with vertices denoted by $u_4,u_5,u_6$. Note that $x u_1 u_2 u_3$ and $y u_4 u_5 u_6$ span two vertex disjoint copies of $\Y$. Replacing $\Y_i$ in $\mathscr Y$ with them creates a larger $\Y$-tiling, contradicting the maximality of $\mathscr Y$. So we conclude that $|D|\le m$. Consequently,
\[
|\A_1|\le |D|\cdot \binom {|U|}2 + |V'\setminus D|\cdot 4|U| \le m\binom{|U|}2+3m\cdot 4|U|\le m\binom{|U|}2+12m|U|.
\]
\end{proof}

Fix $u\in U$, $i\ne j \in [m]$, we denote by $L_{i,j}(u)$ the link graph of $u$ induced on $(V_i, V_j)$, namely the bipartite link graph of $u$ between $V_i$ and $V_j$. 
Let $\mathcal{T}_{\le 6}$ be the set of all triples $uij$, $u\in U$, $i,j\in [m]$ such that $e(L_{i,j}(u))\le 6$.
Let $\mathcal T_{7}^1$ be the set of all triples $uij$, $u\in U$, $i,j\in [m]$ such that\footnote{We could have $e(L_{i,j}(u))\ge 7$ here; but since $L_{i,j}(u)$ contains a vertex cover with one vertex from $V_i$ and one vertex from $V_j$, we must have $e(L_{i,j}(u))=7$.} 
$e(L_{i,j}(u))=7$ and $L_{i,j}(u)$ has a vertex cover of two vertices with one from $V_i$ and the other from $V_j$. 
Let $\T_{\ge 7}^2$ be the set of all triples $uij$, $u\in U$, $i,j \in [m]$ such that $e(L_{i,j}(u))\ge 7$ and $L_{i,j}(u)$ has a vertex cover of two vertices both from $V_i$ or $V_j$. Let $\T_{\ge 7}^3$ be the set of all triples $uij$, $u\in U$, $i,j \in [m]$ such that $e(L_{i,j}(u))\ge 7$ and $L_{i,j}(u)$ contains a matching of size three. 
By the K\"onig--Egervary theorem, a bipartite graph either contains a matching of size three or a vertex cover of size two. Thus $\mathcal{T}_{\le 6}$, $\mathcal T_{7}^{1}$, $\T_{\ge 7}^2$, $\T_{\ge 7}^3$ form a partition of $U\times \binom{[m]}2$.

\begin{figure}
\begin{center}
\includegraphics[height=3cm]{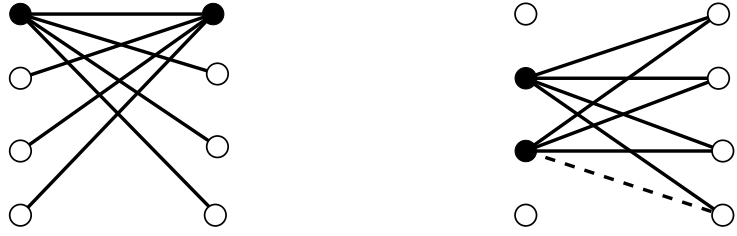}
\caption{The bipartite graph $L_{i,j}(u)$ when the triple is in $\mathcal T_7^1$ or $\mathcal T_{\ge 7}^2$, where the dotted line could be present or not.}
\end{center}
\end{figure}

In order to bound the sizes of $\mathcal{T}_{\le 6}$, $\mathcal T_{7}^{1}$, $\T_{\ge 7}^2$, $\T_{\ge 7}^3$, we need 
the following fact.

\begin{fact}\label{fact:ge7}
\begin{enumerate}[{\rm (i)}]
\item
$\h$ does not contain $i\ne j \in [m]$ and six vertices $u_1,\dots, u_6\in U$ such that $u_1,\dots, u_6$ have the same (labeled) link graph on $(V_i, V_j)$ and $u_1ij\in \T_{\ge 7}^3$.
\item
$\h$ does not contain distinct $i,j, k\in [m]$ and eight vertices  $u_1,\dots, u_8\in U$ such that the following holds. First, $u_1,\dots, u_4$ share the same link graph on $(V_i, V_j)$, and $u_5, \dots, u_8$ share the same link graph on $(V_i, V_k)$. Second, $u_1ij\in \T_{\ge 7}^2$ with the vertex cover in $V_j$ and $u_5 ik\in \T_{\ge 7}^2$ with the vertex cover in $V_k$.
\end{enumerate}
\end{fact}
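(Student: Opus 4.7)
The plan for both parts is the same in spirit: I would contradict the maximality of $\mathscr Y$ by deleting $\Y_i$ (in part (i)) or $\Y_i,\Y_j,\Y_k$ (in part (ii)) and replacing them by strictly more vertex-disjoint copies of $\Y$, assembled from the freed-up vertices of $V'$ together with the relevant $u_r\in U$.

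For part (i), I would pick a matching $a_1b_1,a_2b_2,a_3b_3$ of size three inside the common link $L_{i,j}(u_1)=\cdots=L_{i,j}(u_6)$, with $a_s\in V_i$ and $b_s\in V_j$ (such a matching exists by the definition of $\T_{\ge 7}^3$). Since all six $u_r$ share this link, $u_r a_s b_s\in E(\h)$ for every $r\in[6]$ and $s\in[3]$, and so $\{u_{2s-1},u_{2s},a_s,b_s\}$ for $s=1,2,3$ are three pairwise vertex-disjoint copies of $\Y$, each realised by the pair of edges $u_{2s-1}a_sb_s$ and $u_{2s}a_sb_s$ sharing $\{a_s,b_s\}$. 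Replacing $\Y_i,\Y_j$ in $\mathscr Y$ by these three copies yields a tiling of size $|\mathscr Y|+1$, contradicting maximality.

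For part (ii), let $\{b_1,b_2\}\subseteq V_j$ and $\{c_1,c_2\}\subseteq V_k$ be the promised vertex covers of $L_{i,j}(u_1)$ and $L_{i,k}(u_5)$. Because each of these bipartite links has at least $7$ of the $2\cdot 4=8$ possible edges incident to its cover, at most one edge is missing from each; in particular every $x\in V_i$ has $xb_1\in L_{i,j}(u_1)$ or $xb_2\in L_{i,j}(u_1)$, and similarly one of $xc_1,xc_2$ lies in $L_{i,k}(u_5)$. I would then set up an auxiliary bipartite ``demand'' graph with four positions $p_1,p_2,p_3,p_4$ requiring respectively that $a_1b_1,a_2b_2\in L_{i,j}(u_1)$ and $a_3c_1,a_4c_2\in L_{i,k}(u_5)$, each adjacent to its set $A_{p_s}\subseteq V_i$ of valid candidates. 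Each $A_{p_s}$ has size at least $3$, and the preceding observation gives $A_{p_1}\cup A_{p_2}=A_{p_3}\cup A_{p_4}=V_i$, so Hall's condition is immediate and distinct $a_1,a_2,a_3,a_4\in V_i$ meeting all four demands exist. Then
\[ \{u_1,u_2,a_1,b_1\},\ \{u_3,u_4,a_2,b_2\},\ \{u_5,u_6,a_3,c_1\},\ \{u_7,u_8,a_4,c_2\} \]
are four pairwise vertex-disjoint copies of $\Y$ (each formed by two edges of $\h$ sharing the indicated pair), and replacing $\Y_i,\Y_j,\Y_k$ in $\mathscr Y$ by them again produces a tiling of size $|\mathscr Y|+1$, a contradiction.

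The main obstacle is ensuring the distinctness of $a_1,\dots,a_4$ inside the $4$-element set $V_i$ in part (ii), where there is no slack. The decisive input is that each link misses at most one of its eight potential edges, so no single $x\in V_i$ can be excluded by both positions attached to the $V_j$-cover (nor by both positions attached to the $V_k$-cover); this makes the Hall check immediate, and the rest of the argument is just bookkeeping about how the new copies replace the old ones.
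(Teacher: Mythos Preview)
Your proof is correct and follows essentially the same approach as the paper: in both parts you contradict the maximality of $\mathscr Y$ by replacing two (respectively three) copies of $\Y$ with three (respectively four) new copies built from the shared link structure. The only cosmetic difference is in part (ii), where the paper partitions $V_i$ into two pairs and matches each pair directly against its cover (using that at most one edge is missing from each $K_{2,2}$), whereas you phrase the same selection as a Hall's theorem argument; the underlying observation and the resulting four copies of $\Y$ are identical.
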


\begin{proof}
To see Part (i), since there is a matching of size three in the (same) link graph of $u_1,\dots, u_6$, say, $a_1b_1, a_2b_2, a_3b_3$, then $u_1u_2a_1b_1$, $u_3u_4a_2b_2$ and $u_5u_6a_3b_3$ span three copies of $\Y$. Replacing $\Y_i, \Y_j$ by them gives a $\Y$-tiling larger than $\mathscr Y$, a contradiction.

To see Part (ii), assume that $V_i=\{a,b,c,d\}$. Suppose that the vertex cover of $L_{i,j}(u_1)$ is $\{x_1,y_1\}\subseteq V_j$ and the vertex cover of $L_{i,k}(u_5)$ is $\{x_2,y_2\}\subseteq V_k$. Since $u_1ij\in \T_{\ge 7}^2$, at most one pair from $\{x_1,y_1\}\times \{a,b\}$ is not in $L_{i,j}(u_1)$. Analogously at most one pair from $\{x_2,y_2\}\times \{c,d\}$ is not in $L_{i,k}(u_5)$. Thus, without loss of generality, we may assume that $x_1a, y_1b\in L_{i,j}(u_1)$ and $x_2c, y_2d\in L_{i,k}(u_5)$. Since $u_1, \dots, u_4$ share the same link graph on $(V_i, V_j)$, $u_1u_2x_1a$, $u_3u_4y_1b$ span two copies of $\Y$. Similarly, $u_5u_6x_2c$ and $u_7u_8y_2d$ span two copies of $\Y$. Replacing $\Y_i, \Y_j, \Y_k$ by these four copies of $\Y$ gives a $\Y$-tiling larger than $\mathscr Y$, a contradiction.
\end{proof}

We now show that most triples $uij$, $u\in U$, $i,j\in [m]$ are in $\mathcal T_{7}^{1}$.
\begin{claim}\label{clm:T}
\begin{enumerate}[{\rm (i)}]
\item $|\T_{\ge 7}^3|\le \binom m2 \cdot 2^{16}\cdot 5$,
\item $|\T_{\ge 7}^2|\le 756\binom m2 + m\cdot |U|$, 
\item $|\T_{\le 6}|\le \r \binom n2|U|+ 2^{22}\binom m2$,
\item $|\mathcal T_{7}^{1}|\ge \binom{m}2|U| -  \r n^2|U|$.
\end{enumerate}
\end{claim}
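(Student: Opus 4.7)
The plan is to prove the four bounds in order: parts (i) and (ii) are applications of Fact~\ref{fact:ge7}, while (iii) and (iv) follow by double counting and algebra. For (i), I would fix $\{i,j\}\in\binom{[m]}{2}$ and group the $u\in U$ with $uij\in\T_{\ge 7}^3$ by their labeled link graph on $V_i\cup V_j$; since $|V_i|=|V_j|=4$, there are $2^{16}$ such graphs. Fact~\ref{fact:ge7}(i) forbids six $u$'s sharing the same labeled link graph, because the matching of size three in the link graph then produces three copies of $\Y$ and enlarges $\mathscr Y$; thus at most five $u$'s correspond to each labeled graph, and summing gives $|\T_{\ge 7}^3|\le\binom{m}{2}\cdot 5\cdot 2^{16}$.

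For (ii), a bipartite graph on $V_i\cup V_j$ with at least seven edges has a size-two vertex cover on exactly one side (covers on both sides would force $\le 2\cdot 2=4$ edges), so every triple in $\T_{\ge 7}^2$ is canonically oriented to an ordered pair $(i,j)$ with cover in $V_j$. An explicit count gives $54$ labeled bipartite graphs on $(V_i,V_j)$ with $\ge 7$ edges and vertex cover in $V_j$: $\binom{4}{2}=6$ choices of $\{x,y\}\subseteq V_j$, and for each, one graph with eight edges plus eight with seven. Call the ordered pair $(i,j)$ \emph{heavy} if some labeled link graph is realized by at least eight vertices of $U$ in its orientation class. Using Fact~\ref{fact:ge7}(ii) I would show that for each fixed $i$ at most one $j\ne i$ is heavy: if $(i,j_1)$ and $(i,j_2)$ were both heavy with associated sets $S_1,S_2\subseteq U$ of size $\ge 8$, then for any $4$-subset $A\subseteq S_1$ one has $|S_2\setminus A|\ge|S_2|-4\ge 4$, allowing a $4$-subset $B\subseteq S_2\setminus A$; the resulting eight distinct witnesses contradict Fact~\ref{fact:ge7}(ii). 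Each non-heavy $(i,j)$ contributes at most $7\cdot 54=378$ ordered triples, so summing gives $|\T_{\ge 7}^2|\le m|U|+378m(m-1)=m|U|+756\binom{m}{2}$.

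For (iii), I would double-count $X:=\sum_{u\in U}\sum_{\{i,j\}}e(L_{i,j}(u))$, the number of edges $uvw$ with $u\in U$ and $v,w$ in two distinct clusters of $V'$. The lower bound
\[
X\ge|U|\bigl(\tfrac{7}{16}-\r\bigr)\binom{n}{2}-2|\A_1|-3|\A_0|-6m|U|
\]
follows from $\sum_u\deg_\h(u)=|\A_2|+2|\A_1|+3|\A_0|\ge|U|(\tfrac{7}{16}-\r)\binom{n}{2}$ together with the bound $\le 6m|U|$ on edges of $\A_2$ whose two non-$U$ vertices lie in a single cluster; Claim~\ref{clm:A01} and Fact~\ref{fact:Y} then control $|\A_1|$ and $|\A_0|$. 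The upper bound $X\le 7|U|\binom{m}{2}+9(|\T_{\ge 7}^2|+|\T_{\ge 7}^3|)-|\T_{\le 6}|$ follows from $e(L_{i,j}(u))\le 6,7,16,16$ according as $uij\in\T_{\le 6},\T_7^1,\T_{\ge 7}^2,\T_{\ge 7}^3$ respectively. Combining the two bounds and substituting (i) and (ii), the identity $\tfrac{7}{16}\binom{n}{2}-7\binom{m}{2}\ge\tfrac{7}{4}m|U|$ (valid for $n=4m+|U|$) makes the negative main term $-\tfrac{7}{4}m|U|^2$ dominate the positive error $(2m+1)\binom{|U|}{2}\le m|U|^2+O(|U|^2)$, yielding $|\T_{\le 6}|\le\r\binom{n}{2}|U|+2^{22}\binom{m}{2}$ once all constants are absorbed.

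Part (iv) is immediate from the partition identity $|\T_7^1|=\binom{m}{2}|U|-|\T_{\le 6}|-|\T_{\ge 7}^2|-|\T_{\ge 7}^3|$, since combining (i)--(iii) with $|U|>2^{19}/\r$ and $\binom{m}{2}\le n^2/32$ bounds the total subtraction by $\r n^2|U|$. The main obstacle is (ii): the correct heavy threshold is $8$ because this is the smallest value for which $|S_1|,|S_2|\ge 8$ automatically forces $|S_2\setminus A|\ge 4$ for every $4$-subset $A\subseteq S_1$, thereby permitting the extraction of the eight distinct witnesses Fact~\ref{fact:ge7}(ii) demands; a smaller threshold fails to yield the contradiction, while a larger one worsens the per-pair constant $7\cdot 54=378$.
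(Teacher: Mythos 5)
Your proposal is correct and follows essentially the same approach as the paper in all four parts. Part (i) is identical. In part (ii), you phrase the argument with ``heavy ordered pairs'' oriented by the side containing the cover, while the paper encodes the same information as a digraph $\D$ on $[m]$ with $\deg^-_\D(i)\le 1$; your observation that for each $i$ at most one $j$ is heavy is exactly the paper's in-degree bound, and the per-pair count $7\cdot 54=378$ over $m(m-1)$ ordered pairs reproduces the paper's $108\cdot 7=756$ per unordered pair. In part (iii), the paper works directly with $\sum_{u\in U}\deg(u)=3|\A_0|+2|\A_1|+|\A_2|$ while you isolate $X$ (the distinct-cluster portion of $\A_2$) and then splice the lower bound on $\sum\deg(u)$ back in; these are the same double-count. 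You also use the crude bound $e(L_{i,j}(u))\le 16$ for triples in $\T^2_{\ge 7}$, whereas the paper uses the sharper $\le 8$, giving you $9(|\T^2_{\ge7}|+|\T^3_{\ge7}|)$ in place of $|\T^2_{\ge7}|+9|\T^3_{\ge7}|$; this costs only a harmless constant and is still absorbed into $2^{22}\binom m2$. The key algebraic step — using $n=4m+|U|$ to show $\frac{7}{16}\binom n2 -7\binom m2\ge \frac74 m|U|$, so the $-\frac74 m|U|^2$ term dominates the $O(m|U|^2)$ errors from $|\A_0|,|\A_1|$ once $|U|$ is large — matches the paper's computation (which phrases it as $\binom{|U|}2+4m|U|+\binom{4m}2=\binom n2$). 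Part (iv) is the same partition-identity argument. The only place you are slightly under-specified is the final absorption of constants in (iii) and (iv), but the inequalities do close with $|U|>2^{19}/\rho$, $|U|<12m$, and $m<n/4$ exactly as in the paper.
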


\begin{proof}
To see Part (i), by Fact \ref{fact:ge7} (i), given $i,j\in [m]$ and a bipartite graph on $(V_i, V_j)$ containing a matching of size three, at most five vertices in $U$ can share this link graph on $(V_i, V_j)$. Since there are $2^{16}$ (labeled) bipartite graphs on $(V_i, V_j)$, we get that $|\mathcal{T}_{\ge 7}^3|\le \binom m2 \cdot 2^{16}\cdot 5$.

\smallskip

To see Part (ii), let $\D$ denote the digraph on $[m]$ such that $(i,j)\in E(\D)$ if and only if at least eight vertices $u_1,\dots, u_8\in U$ share the same link graph on $(V_i, V_j)$ such that $u_1 ij\in \T_{\ge 7}^2$, and the vertex cover is in $V_i$. We claim that $\deg_{\D}^-(i)\le 1$ for every $i\in [m]$ and consequently $e(\D)\le m$. Suppose instead,  there are $i,j,k\in [m]$ such that $(j, i), (k, i)\in E(\D)$, then eight vertices of $U$ share the same link graph on $(V_i, V_j)$, and (not necessarily different) eight vertices of $U$ share the same link graph on $(V_i, V_k)$. Thus we can pick four distinct vertices for each of $(V_i, V_j)$ and $(V_i, V_k)$ and obtain a structure forbidden by Fact \ref{fact:ge7} (ii), a contradiction. Note that there are $2\cdot \binom 42\cdot 8+2\cdot \binom 42=108$ (labeled) bipartite graphs on $(V_i, V_j)$ with at least seven edges and a vertex cover of two vertices both from $V_i$ or $V_j$. Furthermore, fixing one of these bipartite graphs, if $(i,j)\notin E(\D)$ and $(j,i)\notin E(\D)$, then, by the definition of $\D$, at most seven vertices in $U$ share this link graph. Hence
\[
|\T_{\ge 7}^2|\le \binom m2\cdot 108\cdot 7 + m |U|=756\binom m2  + m |U|.
\]

\smallskip

To see Part (iii), recall that $\A_i$ is the set of all edges of $\h$ with exactly $i$ vertices in $V'$. Then
\[
|\A_2|\le 6|\T_{\le 6}| + 7 |\T_{7}^1| + 8 |\T_{\ge 7}^2|+16|\T_{\ge 7}^3| + \binom 42 m |U|.
\]
Together with $|\T_{\le 6}|+|\T_{7}^1|+|\T_{\ge 7}^2|+|\T_{\ge 7}^3|=\binom m2 |U|$, we get,
\begin{align}
|\A_2|&\le7 \binom{m}2 |U| - |\mathcal T_{\le 6}| + |\T_{\ge 7}^2|+9|\T_{\ge 7}^3| + 6 m |U| \nonumber \\
         &\le 7 \binom{m}2 |U| - |\mathcal T_{\le 6}| + \binom m2\cdot (2^{16}\cdot 45+756)+ 7 m |U|\quad\text{by Parts (i), (ii)} \nonumber \\
         &< 7 \binom{m}2 |U| - |\mathcal T_{\le 6}| + 2^{22}\binom m2+ 7 m |U|. \label{eq:A2}
\end{align}

We know that $\sum_{u\in U}\deg(u) = 3|\A_0| + 2|\A_1| +|\A_2|$. Thus, by $|\A_0| \le \frac13\binom{|U|}2$, Claim \ref{clm:A01} and \eqref{eq:A2}, we have
\begin{align}
\sum_{u\in U}\deg(u) &\le \binom{|U|}2 +2m\binom{|U|}2+24m|U| + 7 \binom{m}2 |U| - |\mathcal T_{\le 6}|+ 2^{22}\binom m2+ 7 m |U|  \nonumber\\
&= \binom{|U|}2 +m|U|^2+30m|U| + 7 \binom{m}2 |U| - |\mathcal T_{\le 6}|+ 2^{22}\binom m2  \nonumber\\
&< \frac7{16}\binom{|U|}2|U|+\frac74 m|U|^2+ \frac7{16} \binom{4m}2|U| - |\mathcal T_{\le 6}|+ 2^{22}\binom m2 \quad\text{as } |U|>40  \nonumber\\
&= \frac {7}{16}\binom{n}2 |U| - |\mathcal T_{\le 6}| + 2^{22}\binom m2, \label{eq:sumU}
\end{align}
where the last inequality is due to $\binom{|U|}2 +4m|U| + \binom {4m}2=\binom{|U|+4m}2=\binom n2$.

On the other hand,  $\delta_1(\h)\ge (\frac7{16} - \r) \binom n2$ implies that $\sum_{u\in U}\deg(u) \ge (\frac7{16} - \r) \binom n2|U|$. Together with \eqref{eq:sumU}, this gives the desired bound for $|\T_{\le 6}|$.

\smallskip

To see Part (iv), note that \eqref{eq:Um} implies that $|U|<\frac34n<\frac34 16m=12m$. By Parts (i)--(iii), we have
\begin{align*}
|\mathcal T_{7}^1|&\ge \binom{m}2|U| -\left(\binom m2\cdot (2^{16}\cdot 5+756)+ m|U| \right) - \r \binom n2|U| - 2^{22}\binom m2 &&\\
&\ge  \binom{m}2|U| -  \r \binom n2|U| - 2^{23}\binom m2  \quad\text{as }|U|<12m \\
&\ge \binom{m}2|U| -  \r \binom n2|U| - 2^{19}\binom n2  \quad\text{as }m<\frac n4 \\
&> \binom{m}2|U| -  \r n^2|U| \quad \text{as } |U|>2^{19}/\r. \qedhere
\end{align*}
\end{proof}

For a triple $uij\in \T_{7}^1$, we call $v_1\in V_i$ and $v_2\in V_j$ \emph{centers} for $u$ if $\{v_1, v_2\}$ is the vertex cover of $L_{i,j}(u)$. Define $G$ as the graph on the vertex set $V'$ such that two vertices $v_1,  v_2\in V'$ are adjacent if and only if there are at least 16 vertices $u\in U$ such that $v_1, v_2$ are centers for $u$.

\begin{fact}\label{fact:C}
For every $i\in [m]$, at most one vertex $v\in V_i$ satisfies $\deg_G(v)>0$.
\end{fact}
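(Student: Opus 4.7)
The plan is to argue by contradiction against the maximality of $\mathscr Y$: assuming that two distinct vertices $v_1,v_2\in V_i$ both have $\deg_G>0$, I will remove $\Y_i$ (and one or two further members) from $\mathscr Y$ and insert strictly more copies of $\Y$ drawn from $V_i$, $U$, and the removed clusters, producing a $\Y$-tiling of size $\ge m+1$.

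Pick $v_1w_1,v_2w_2\in E(G)$ with $w_1\in V_j$ and $w_2\in V_k$; note that $j,k\ne i$. By the definitions of $G$ and of ``centers,'' there are at least $16$ vertices $u\in U$ for which $v_1,w_1$ are centers, i.e., $L_{i,j}(u)$ is the unique $7$-edge bipartite graph on $(V_i,V_j)$ whose vertex cover is $\{v_1,w_1\}$, namely the union of the star at $v_1$ with the star at $w_1$. In particular every such $u$ satisfies $uv_1x\in\h$ for all $x\in V_j$ and $uyw_1\in\h$ for all $y\in V_i$, and an analogous statement holds for the $\ge 16$ vertices $u'\in U$ for which $v_2,w_2$ are centers. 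Writing $V_i=\{v_1,v_2,a,b\}$, the key building block is that two distinct witnesses $u,u'$ of $v_1w_1$ together with an edge common to their link graphs, such as $aw_1$ or $v_1x$ for $x\in V_j$, span a copy of $\Y$ on $\{u,u',a,w_1\}$ (with edges $uaw_1$ and $aw_1u'$) or on $\{u,u',v_1,x\}$ (with edges $uv_1x$ and $v_1xu'$).

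If $j=k$, I would remove $\Y_i$ and $\Y_j$ from $\mathscr Y$ and insert the three pairwise vertex-disjoint copies of $\Y$
\[
\{u_1,u_2,a,w_1\},\quad \{u_3,u_4,v_1,c\},\quad \{u_1',u_2',v_2,d\},
\]
where $u_1,\dots,u_4$ are witnesses of $v_1w_1$, $u_1',u_2'$ are witnesses of $v_2w_2$ (all six distinct), and $c\ne d$ lie in $V_j\setminus\{w_1\}$. If $j\ne k$, I would instead remove $\Y_i,\Y_j,\Y_k$ and insert the four pairwise vertex-disjoint copies of $\Y$
\[
\{u_1,u_2,a,w_1\},\quad \{u_3,u_4,v_1,c\},\quad \{u_1',u_2',b,w_2\},\quad \{u_3',u_4',v_2,c'\},
\]
with eight distinct witnesses, any $c\in V_j\setminus\{w_1\}$, and any $c'\in V_k\setminus\{w_2\}$. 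Either construction yields a $\Y$-tiling of size $m+1$, contradicting the maximality of $\mathscr Y$.

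The main obstacle---and it is only a bookkeeping one---is disjointness: the $4$ or $8$ witnesses chosen from $U$ must be pairwise distinct, and the auxiliary vertices $c,d,c'$ must avoid vertices already used from the same cluster. Both requirements are easily met since each witness pool has at least $16$ elements while at most $8$ are needed, and each cluster has $4$ vertices while at most two of them are fixed in advance. Once these choices are made, every inserted $4$-set is a copy of $\Y$ by the key building block above.
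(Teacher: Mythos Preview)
Your argument is correct and follows essentially the same strategy as the paper: assume two vertices of $V_i$ have positive $G$-degree, split into the cases $j=k$ and $j\ne k$, and in each case use the witnesses from $U$ together with vertices of the involved clusters to assemble three (resp.\ four) vertex-disjoint copies of $\Y$ that replace two (resp.\ three) members of $\mathscr Y$, contradicting maximality. Your explicit description of the $7$-edge link graph (as the union of the two stars at the centers) and the resulting ``building block'' is a nice touch that makes the edge-checks transparent; the paper's proof uses the same facts implicitly with slightly different vertex pairings.
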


\begin{proof}
Suppose to the contrary, some $V_i=\{a,b,c,d\}$ satisfies $\deg_G(a), \deg_G(b)>0$. Let $a'\in N_G(a), b'\in N_G(b)$ and assume that $a'\in V_j, b'\in V_k$ (it is possible to have $a'=b'$).
Pick $x\in V_j\setminus \{a', b'\}$ and $y\in V_k\setminus \{a', b', x\}$. By the definition of $G$, we can find $u_1,\dots, u_4, u_1', \dots, u_4'\in U$ such that $a, a'$ are centers for $u_l$ and $b, b'$ are centers for $u_l'$ for $l=1,\dots, 4$. This gives three copies of $\Y$ on $a x u_1u_2, a'cu_3u_4, b y u_1'u_2'$ immediately. 
So if $j=k$, then replacing $\Y_i, \Y_j$ by them in $\mathscr Y$ gives a larger $\Y$-tiling, a contradiction.
If $j\neq k$, then we have $a'\neq b'$, and we get one more copy of $\Y$ on $b'du_3'u_4'$. 
Replacing $\Y_i, \Y_j, \Y_k$ by these four copies of $\Y$ in $\mathscr Y$ gives a larger $\Y$-tiling, a contradiction.
\end{proof}

Let $C$ be the set of vertices $v\in V'$ such that $\deg_G(v)\ge 7$ and $\deg_G(v')\ge 2$ for some $v'\in N_G(v)$, where $N_G(v)$ denotes the neighborhood of $v$ in $G$.

\begin{claim}\label{clm:C}
$(1-2^{11}\r)m\le |C|\le m$.
\end{claim}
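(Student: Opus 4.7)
The plan is to prove the two bounds separately. The upper bound $|C|\le m$ follows immediately from Fact~\ref{fact:C}: every $v\in C$ satisfies $\deg_G(v)\ge 7>0$, so each $V_i$ contributes at most one element to $C$.

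For the lower bound, I would pass to an auxiliary graph $H$ on $[m]$ that records $G$-edges between clusters. By Fact~\ref{fact:C}, for each $i\in[m]$ there is at most one vertex $c_i\in V_i$ with $\deg_G(c_i)>0$, so every edge of $G$ corresponds to a unique pair $\{i,j\}$ with $c_ic_j\in E(G)$; declare such pairs the edges of $H$. Then $\deg_H(i)=\deg_G(c_i)$, and $V_i\cap C\ne\emptyset$ translates to: $\deg_H(i)\ge 7$ and some $j\in N_H(i)$ has $\deg_H(j)\ge 2$. Call the remaining $i$ \emph{bad}; the aim is to show there are at most $2^{11}\r m$ bad indices.

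The key step is a lower bound on $e(H)$ derived from Claim~\ref{clm:T}(iv). For each pair $ij\notin E(H)$ and each of the $16$ choices $(v_1,v_2)\in V_i\times V_j$, fewer than $16$ vertices of $U$ can have $v_1,v_2$ as centers (otherwise $v_1v_2\in E(G)$), and these counts partition $\{u\in U:uij\in\T_7^1\}$; hence this set has size less than $256$. Bounding it by $|U|$ when $ij\in E(H)$ and comparing with Claim~\ref{clm:T}(iv) gives
\[
e(H)\ge\binom{m}{2}-\frac{\r n^2|U|}{|U|-256}.
\]
Since $|U|>2^{19}/\r$ (from the running assumption in the proof of Lemma~\ref{lem:Y}) and $n<16m$ by \eqref{eq:Um}, the factor $|U|/(|U|-256)$ is essentially $1$, so for $n$ large the deficit is bounded by, say, $2^9\r m^2$.

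To finish, I would split the bad indices by degree. Let $b_1=|\{i:\deg_H(i)\le 6\}|$; the identity $\sum_i\deg_H(i)=2e(H)$ combined with the bound above forces $b_1\le\frac{2\cdot 2^9\r m^2}{m-7}$, which is below $2^{10}\r m$ for $n$ large. Let $b_2$ count the remaining bad indices, namely those with $\deg_H(i)\ge 7$ but every $H$-neighbor of degree $\le 1$; such a neighbor must have degree exactly $1$, i.e., is a leaf of $H$. Since a leaf is adjacent to only one vertex and there are at most $b_1$ leaves in total, $7b_2\le b_1$, giving $b_2\le b_1/7$. The total number of bad indices is therefore at most $\tfrac{8}{7}b_1<2^{11}\r m$, yielding $|C|\ge(1-2^{11}\r)m$. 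The main technical point is that the chain of loss factors (from $|U|/(|U|-256)$, from $m^2/(m-7)$, and from $8/7$) must fit under $2^{11}$; this works because $\tfrac{8}{7}\cdot 2^{10}<2^{11}$, leaving comfortable room for $n$ sufficiently large.
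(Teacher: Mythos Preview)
Your proof is correct and follows essentially the same approach as the paper's. The only differences are cosmetic: you pass to the index graph $H$ on $[m]$ rather than working directly with $G$ on $V'$, you use the pigeonhole threshold $256=16\cdot 16$ where the paper uses $240=16\cdot 15$, and you bound the ``type-$b_2$'' vertices via $7b_2\le b_1$ (using $\deg_H\ge 7$) whereas the paper bounds the analogous set $F$ via $2|F|\le|D_1|$ (using only $\deg_G\ge 2$); these are the same ideas with slightly different constants, and both comfortably fit under the target $2^{11}\r m$.
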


\begin{proof}
The upper bound follows from Fact \ref{fact:C} immediately.

To see the lower bound, we first show that
\begin{equation}
\label{eq:eG}
e(G)\ge (1-2^{10}\r)\binom m2.
\end{equation}
To see this, let $M$ be the set of pairs $i,j\in \binom{[m]}2$ such that there are at most $240$ vertices $u\in U$ satisfying that $uij\in \T_7^1$. By Claim \ref{clm:T} (iv), the number of triples $uij \not\in \T_{7}^1$ ($u\in U$, $i \ne j \in [m]$) is at most $\r n^2 |U|$. Thus
\[
|M|\le \frac{\r n^2 |U|}{|U|-240}\le \frac{\r n^2 |U|}{\frac23|U|}= \frac{3\r n^2}2<\frac{3\r (16m)^2}2<2^{10}\r \binom m2.
\]
where the second last inequality follows from \eqref{eq:Um}.
Fix a pair $i,j\in \binom {[m]}2\setminus M$. There are at least $241=16\cdot 15+1$ vertices $u\in U$ satisfying that $uij\in \T_7^1$. Since $V_i\times V_j$ contains 16 pairs of vertices, by the pigeonhole principle, some pair of vertices $v_1\in V_i,  v_2\in V_j$ are centers for at least 16 vertices $u\in U$, namely, $v_1v_2\in G$. Thus \eqref{eq:eG} follows.

By Fact \ref{fact:C}, there are at most $m$ vertices with positive degree in $G$. For convenience, define $V''\subset V'$ as an arbitrary set of $m$ vertices that contains all the vertices with positive degree in $G$. Furthermore, for any integer $t<m$, let $D_t\subseteq V''$ denote the set of vertices $v$ such that $\deg_G(v)\le t$. Let $F\subseteq (V''\setminus D_1)$ denote the set of vertices $v$ such that $N_G(v)\subseteq D_1$. We have
\[
2e(G)\le t|D_t|+(m-1)(m -|D_t|) = m(m-1)-(m-t-1)|D_t|.
\]
Together with \eqref{eq:eG}, it gives $|D_t|\le 2^{10}\r\frac{m(m-1)}{m-t-1}$. By definition, each vertex $v\in F$ satisfies $\deg_G(v)\ge 2$, and its neighborhood is contained in $D_1$ (thus the vertices in $F$ have disjoint neighborhoods). This implies that $|F|\le |D_1|/2$. Recall that $C= V''\setminus (D_6\cup F)$. Since $D_6$ and $D'_2$ are not necessarily disjoint,
\[
|C|\ge m-|D_6|-|F|\ge m-2^{10}\r\frac{m(m-1)}{m-7}- 2^{10}\r\frac{m(m-1)}{2(m-2)}\ge (1-2^{11}\r)m.
\]
as claimed.
\end{proof}

Let $I_C$ be the set of all $i\in [m]$ such that $V_i\cap C\neq \emptyset$. Fact \ref{fact:C} and Claim~\ref{clm:C} together imply that $| I_C|=|C|\ge (1-2^{11}\r)m$. Let $A = (\bigcup_{i\in I_C}V_i\setminus C) \cup U$.

\begin{claim}\label{clm:edge}
$\h[A]$ contains no copy of $\Y$, thus $e(A)\le \frac13\binom n2$.
\end{claim}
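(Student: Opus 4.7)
The plan is to argue by contradiction. Assume $\h[A]$ contains a copy $\Y^{*}$ of $\Y$ on vertices $w_1,w_2,w_3,w_4$ with edges $w_1w_2w_3$ and $w_2w_3w_4$, and use it to build a $\Y$-tiling strictly larger than $\mathscr Y$, contradicting maximality. Once no copy of $\Y$ lives in $\h[A]$, Fact~\ref{fact:Y} applied to $\h[A]$ immediately yields $e(A)\le \frac{1}{3}\binom{|A|}{2}\le \frac{1}{3}\binom{n}{2}$, as claimed.

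Set $J=\{i\in I_C : V_i\cap\{w_1,\ldots,w_4\}\ne\emptyset\}$. If $J=\emptyset$ then all $w_\ell\in U$ and $\mathscr Y\cup\{\Y^{*}\}$ is already a larger $\Y$-tiling, a contradiction. So assume $|J|\ge 1$. For each $i\in J$, Fact~\ref{fact:C} gives a unique $c_i\in V_i\cap C$, and $c_i\notin\{w_1,\ldots,w_4\}$ since $C\cap A=\emptyset$. The definition of $C$ guarantees $\deg_G(c_i)\ge 7$ and the existence of $c_{\sigma(i)}\in N_G(c_i)$ with $\deg_G(c_{\sigma(i)})\ge 2$. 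Using $|J|\le 4$ and $\deg_G(c_i)\ge 7$, I would choose the indices $\sigma(i)$ pairwise distinct and outside $J$, and, when needed, a further index $\tau(i)$ with $c_{\tau(i)}\in N_G(c_{\sigma(i)})\setminus\{c_i\}$, distinct from all previously chosen indices.

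The core of the argument exploits each $G$-edge $c_i c_{\sigma(i)}$: by the definition of $G$, there are at least $16$ vertices $u\in U$ such that $\{c_i,c_{\sigma(i)}\}$ is the vertex cover of $L_{i,\sigma(i)}(u)$ and $e(L_{i,\sigma(i)}(u))=7$. The latter forces \emph{every} potential edge of $L_{i,\sigma(i)}(u)$ incident to $c_i$ or $c_{\sigma(i)}$ to be present; equivalently, $u c_i v\in E(\h)$ for every $v\in V_{\sigma(i)}$ and $u c_{\sigma(i)} w\in E(\h)$ for every $w\in V_i$. From each such $u$ I extract copies of $\Y$ of the form $\{c_i,u,v,v'\}$ with distinct $v,v'\in V_{\sigma(i)}\setminus\{c_{\sigma(i)}\}$ (the two edges $c_iuv$ and $c_iuv'$ share $\{c_i,u\}$), or symmetrically $\{c_{\sigma(i)},u,w,w'\}$ with distinct $w,w'\in V_i\setminus\{c_i\}$. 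The same construction applied to each $G$-edge $c_{\sigma(i)}c_{\tau(i)}$ produces further $\Y$'s through $c_{\sigma(i)}$ or $c_{\tau(i)}$. The bound of $16$ connector vertices per $G$-edge leaves ample room to keep all required $u$'s pairwise distinct and disjoint from $\{w_1,\ldots,w_4\}$.

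A case analysis on $|J|$ and $\ell_i:=|V_i\cap\{w_1,\ldots,w_4\}|$ completes the argument. In each case I remove from $\mathscr Y$ the copies $\Y_i$ for $i\in J$, together with those auxiliary copies $\Y_{\sigma(i)}, \Y_{\tau(i)}$ whose vertex sets are touched by the new $\Y$'s, and reassemble using $\Y^{*}$, a collection of the center-through-neighbor $\Y$'s described above, and (where possible) the still-untouched original copies $\Y_{\sigma(i)}$ or $\Y_{\tau(i)}$, producing strictly more $\Y$'s than were removed. The hard part is the case $|J|=1$ with $\ell_{i^{*}}=3$, where only $c_{i^{*}}$ remains free in $V_{i^{*}}$ and both $V_{\sigma(i^{*})}$ and $V_{\tau(i^{*})}$ must be pulled in; here the condition $\deg_G(c_{\sigma(i^{*})})\ge 2$ built into the definition of $C$ is essential, supplying the extra neighbor $c_{\tau(i^{*})}$ needed to generate the missing $\Y$. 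In every case the enlarged $\Y$-tiling contradicts the maximality of $\mathscr Y$, so $\h[A]$ is $\Y$-free and the edge bound follows from Fact~\ref{fact:Y}.
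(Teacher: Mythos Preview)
Your overall strategy (contradiction via an enlarged $\Y$-tiling, case split on how $\Y^{*}$ meets the clusters, and invoking the ``degree $\ge 2$ neighbor'' clause in the definition of $C$ for the tight case) matches the paper exactly. The gap is in the specific $\Y$-building block you chose. Your auxiliary copies have the shape $\{c_i,u,v,v'\}$ with one connector $u\in U$ and \emph{two} partner-cluster vertices $v,v'$; the paper instead uses $\{c,w,u,u'\}$ with one partner-cluster vertex $w$ and \emph{two} connectors $u,u'\in U$ (which is available because each $G$-edge supplies at least $16$ such $u$). This seemingly cosmetic difference breaks your count precisely in the case you flagged as hardest.

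Take $|J|=1$, $\ell_{i^{*}}=3$. After $\Y^{*}$ consumes $V_{i^{*}}\setminus\{c_{i^{*}}\}$, you must remove $\Y_{i^{*}},\Y_{\sigma(i^{*})},\Y_{\tau(i^{*})}$ and produce four new disjoint $\Y$'s. The only $G$-edges at hand are $c_{i^{*}}c_{\sigma(i^{*})}$ and $c_{\sigma(i^{*})}c_{\tau(i^{*})}$, and in your format the three non-$\Y^{*}$ copies are forced to be $\{c_{i^{*}},u_1,v_1,v_2\}$, $\{c_{\sigma(i^{*})},u_2,x_1,x_2\}$, $\{c_{\tau(i^{*})},u_3,y_1,y_2\}$ with $v_1,v_2,y_1,y_2\in V_{\sigma(i^{*})}\setminus\{c_{\sigma(i^{*})}\}$ and $x_1,x_2\in V_{\tau(i^{*})}\setminus\{c_{\tau(i^{*})}\}$. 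That demands four distinct vertices in $V_{\sigma(i^{*})}\setminus\{c_{\sigma(i^{*})}\}$, but only three exist; pulling in further clusters does not help since each extra removal raises the target by one. (Already $|J|=1$, $\ell_{i^{*}}=2$ fails similarly.) The fix is exactly the paper's: build each auxiliary copy as $\{c,w,u,u'\}$, spending two $U$-vertices and only one cluster vertex. With that form, in the $\ell_{i^{*}}=3$ case one gets $c_{i^{*}}w_1u_1u_2$, $c_{\sigma(i^{*})}w_3u_3u_4$, $c_{\tau(i^{*})}w_2u_5u_6$ with $w_1,w_2\in V_{\sigma(i^{*})}$ and $w_3\in V_{\tau(i^{*})}$, giving four copies against three removed, and all other cases close the same way.
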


\begin{proof}
The first half of the claim implies the second half by Fact \ref{fact:Y}. Suppose instead, $\h[A]$ contains a copy of $\Y$, denoted by $\Y_0$, on $V_0$. Since $\h[U]$ contains no copy of $\Y$, $V_0$ must intersect some $V_i$ with $i\in I_C$.  Without loss of generality, suppose that $V_1, \dots, V_{j}$ contain the vertices of $V_0\setminus U$ for some $1\le j\le 4$ (recall that $V_i=V(\Y_i)$ for $i\in [m]$). Here we separate two cases.

\smallskip
\noindent \textbf{Case 1.} For any $i\in [j]$, $|V_i\cap V_0|\le 2$.
\smallskip

For $i \in [j]$, let $\{c_i\} =V_i\cap C$, and suppose that $d_i\in V_i\setminus (V_0\cup \{c_i\})$. For each $i\in [j]$, since $\deg_G(c_i)\ge 7$, we can pick distinct $v_i \in N_G(c_i)\setminus (V_1\cup \dots \cup V_j)$. By Fact~\ref{fact:C}, $v_1, \dots, v_j$ are contained in different members of $\mathscr Y$ (also different from $\Y_1, \dots, \Y_j$). Let
$v'_1, \dots, v'_j$ be arbitrary vertices in these members of $\mathscr Y$, respectively, which are different from $v_1, \dots, v_j$. For every $i\in [j]$, since $c_i, v_i$ are centers for at least $16$ vertices of $U$, we find a set of four vertices $u_i^1,\dots, u_i^4\in U\setminus V_0$ disjoint from the previous ones such that $c_i, v_i$ are centers for them. This is possible because $|V_0\cap U|\le 4-j$ and the number of available vertices in $U$ is thus at least $16-(4-j)=12+j\ge 4j$.

Note that for $i \in [j]$, $c_i v'_i u_i^1 u_i^2, d_i v_i u_i^3 u_i^4$ span two copies of $\Y$. Together with $\Y_0$, this gives $2j+1$ copies of $\Y$ while using vertices from $2j$ members of $\mathscr Y$, contradicting the maximality of $\mathscr Y$.

\smallskip
\noindent \textbf{Case 2.} There exists $i_0\in [j]$, such that $|V_{i_0}\cap V_0|=3$.
\smallskip

Note that $j=1$ or 2 in this case. Without loss of generality, assume that $|V_1\cap V_0|=3$. First assume that $j=1$ (then $|V_0\cap U|=1$). Let $\{c_1\} =V_1\cap C$. By the definition of $C$, there exists $c_2\in N_G(c_1)$ such that  $\deg_G(c_2)\ge 2$. Let $c_3\ne c_1$ be a neighbor of $c_2$ in $G$. Assume that $\Y_{i_2}, \Y_{i_3}\in \mathscr Y$ contains $c_2, c_3$, respectively.
By the definition of $G$, we can find $u_1, \dots, u_6 \in U\setminus V_0$ such that $c_1, c_2$ are centers for $u_1, u_2$, and $c_2, c_3$ are centers for $u_3, u_4, u_5, u_6$. Thus, $c_1 w_1u_1u_2$, $c_2 w_3 u_3 u_4$, $c_3 w_2 u_5 u_6$ span three copies of $\Y$, where $w_1, w_2$ are two vertices in $V_{i_2}\setminus \{c_2\}$ and $w_3\in V_{i_3}\setminus \{c_3\}$. Together with $\Y_0$, it gives four copies of $\Y$ while using vertices from three members of $\mathscr Y$, contradicting the maximality of $\mathscr Y$.

Now assume that $j=2$, that is, $|V_0\cap V_2|=1$. We pick $c_2, c_3, u_1, \dots, u_6$ in the same way as in the $j=1$ case. If $c_2\in V_2$, then this gives four copies of $\Y$ by using vertices from three members of $\mathscr Y$, a contradiction. Otherwise, let $\{c_4\} =V_2\cap C$ and pick $c_5\in N_G(c_4)\setminus \{c_1, c_2, c_3\}$ (this is possible because $\deg_G(c_4)\ge 7$). Suppose that $\Y_{i_5}$ contains $c_5$. We pick four new vertices $u_7, \dots, u_{10}\in U$ for whom $c_4, c_5$ are centers. Thus, we can form two copies of $\Y$ by using vertices from $\Y_2, \Y_{i_5}$ and $u_7, \dots, u_{10}$. Together with the four copies of $\Y$ given in the previous case, we obtain six copies of $\Y$ while using vertices from five members of $\mathscr Y$, a contradiction.
\end{proof}

Note that the edges not incident to $C$ are either contained in $A$ or incident to some $V_i$, $i\notin I_C$. By Claim \ref{clm:edge}, $C$ is incident to all but at most
\begin{align*}
& e(A) + 4\cdot 2^{11}\r m \binom {n-1}2  < \frac13 \binom n2  + 2^{10}\r (4m) n^2 \\
& <2^{10}\r n^2\left(\frac1{2^{10}\r}+4m \right)<2^{10}\r n^3,
\end{align*}
edges, where the last inequality holds because $|U|>\frac 1{2^{10}\r}$. Since $|C|\le m\le n/4$, we can pick a set $B\subseteq V\setminus C$ of order $\lfloor \frac34n\rfloor$. Then $e(B)<2^{10}\r n^3$, which implies that $\h$ is $2^{10}\r$-extremal.
\end{proof}

In Claim~\ref{clm:edge} we proved that $\h[A]$ contains no copy of $\Y$, where, by Claim~\ref{clm:C},
\[
|A|=n - m - 3(m-|C|)\ge n-\frac n4 - 3\cdot 2^{11}\r m\ge (1-2^{11}\r)\frac34n.
\]
We summarize this in a lemma and will use it in our forthcoming paper \cite{HZ3}.
\begin{lemma}\label{lem:Y1}
For any $\r >0$, there exists an integer $n_0$ such that the following holds. Suppose $\h$ is a 3-graph on $n>n_0$ vertices with
\[
\delta_1(\h)\ge \left(\frac7{16} - \r \right)\binom n {2},
\]
then there is a $\Y$-tiling covering all but at most $2^{19}/\r$ vertices of $\h$ unless $\h$ contains a set of order at least $(1-2^{11}\r)\frac34 n$ that contains no copy of $\Y$.
\end{lemma}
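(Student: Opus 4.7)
The plan is to essentially re-extract the structural information produced inside the proof of Lemma~\ref{lem:Y}. That proof first fixes a largest $\Y$-tiling and, after disposing of easy cases, builds a specific vertex set $A$ which it shows to be $\Y$-free (Claim~\ref{clm:edge}) and large (via Claim~\ref{clm:C}); only the final step converts this into an extremality conclusion. For Lemma~\ref{lem:Y1} we simply stop before that conversion.

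In detail, I would fix a largest $\Y$-tiling $\mathscr{Y}=\{\Y_1,\dots,\Y_m\}$, set $V_i=V(\Y_i)$, $V'=\bigcup_i V_i$, and $U=V(\h)\setminus V'$. If $|U|\le 2^{19}/\r$, then $\mathscr{Y}$ itself gives a $\Y$-tiling covering all but $\le 2^{19}/\r$ vertices and we are done. Otherwise assume $|U|>2^{19}/\r$ and carry out verbatim the link-graph analysis from the proof of Lemma~\ref{lem:Y}: partition the triples $uij$ with $u\in U$ and $i\ne j\in[m]$ into $\T_{\le 6},\T_7^1,\T_{\ge 7}^2,\T_{\ge 7}^3$; use the maximality of $\mathscr{Y}$ through Fact~\ref{fact:ge7} to bound $|\T_{\ge 7}^2|$ and $|\T_{\ge 7}^3|$; combine these bounds with the degree hypothesis $\delta_1(\h)\ge(\tfrac{7}{16}-\r)\binom{n}{2}$ to derive Claim~\ref{clm:T}; then build the center graph $G$ on $V'$, define $C\subseteq V'$ exactly as in that proof, and invoke Fact~\ref{fact:C} together with Claim~\ref{clm:C} to conclude that $|I_C|=|C|\ge(1-2^{11}\r)m$, where $I_C=\{i\in[m]:V_i\cap C\ne\emptyset\}$.

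Finally, define $A=\bigl(\bigcup_{i\in I_C}(V_i\setminus C)\bigr)\cup U$. By Fact~\ref{fact:C} each $V_i$ with $i\in I_C$ contributes exactly three vertices to $A$, so
\[
|A|=|U|+3|C|=(n-4m)+3|C|\ge n-m-3\cdot 2^{11}\r\, m\ge (1-2^{11}\r)\tfrac{3n}{4},
\]
using $4m\le n$, which matches the required lower bound. Claim~\ref{clm:edge}, applied verbatim, shows that $\h[A]$ contains no copy of $\Y$: a hypothetical copy of $\Y$ in $A$ must meet some $V_i$ with $i\in I_C$, and the center structure inherited from $C$ together with the existence of many common link-graph vertices in $U$ lets one build a strictly larger $\Y$-tiling than $\mathscr{Y}$, a contradiction.

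The only substantive step is the $\Y$-freeness of $A$ (Claim~\ref{clm:edge}), whose verification requires a careful case analysis on how a putative copy of $\Y$ in $A$ can distribute its vertices among the clusters $V_i$ with $i\in I_C$ and the uncovered set $U$. However, this entire case analysis is already carried out inside the proof of Lemma~\ref{lem:Y}, so Lemma~\ref{lem:Y1} follows essentially as a corollary with no new combinatorial input; the bookkeeping reduces to verifying that the size bound $|A|\ge (1-2^{11}\r)\tfrac{3n}{4}$ indeed holds, which we did above using $m\le n/4$.
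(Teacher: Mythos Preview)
Your proposal is correct and takes essentially the same approach as the paper: the paper does not give a separate proof of Lemma~\ref{lem:Y1} but simply remarks, immediately after the proof of Lemma~\ref{lem:Y}, that Claim~\ref{clm:edge} yields a $\Y$-free set $A$ whose size is at least $(1-2^{11}\r)\tfrac34 n$ by Claim~\ref{clm:C}, exactly as you compute. One tiny point: when you say to carry out the analysis ``verbatim,'' note that the early step in the proof of Lemma~\ref{lem:Y} disposing of the case $|U|\ge \tfrac34 n$ concludes with extremality rather than a $\Y$-free set, but in that case $U$ itself is $\Y$-free by maximality of $\mathscr{Y}$ and already has size $\ge (1-2^{11}\r)\tfrac34 n$, so the conclusion still holds.
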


\section{The Extremal Theorem}
In this section we prove Theorem \ref{lemE}. Let $n$ be sufficiently large and $\h$ be a 3-graph on $n$ vertices satisfying \eqref{eqdeg}. Assume that $\h$ is $\beta$-extremal, namely, there is a set $B\subseteq V(\h)$, such that $|B| = {\lfloor\frac34 n\rfloor}$ and $e(B)\le \beta n^3$. For the convenience of later calculations, we let $\e_0 = 18\beta$ and derive that
\begin{equation}\label{eqB}
e(B)< \e_0 \binom {|B|}3.
\end{equation}

Let us outline our proof here. We define two disjoint sets $A', B' \subseteq V(\h)$ such that $A'$ consists of the vertices with high degree in $B$ and $B'$ consists of the vertices with low degree in $B$.
We will show that $A'\approx A$ and $B'\approx B$ (Claim \ref{clm:size}). To illustrate our proof ideas,
suppose that we are in an ideal case with $n\in 4\mathbb N$, $A'=A$, and $B'=B$. In this case we arbitrarily partition $B'$ into three sets $B_1, B_2$ and $B_3$ of equal size, and find a labelling $B_1=\{b_1,\dots, b_{n/4}\}$, $B_2=\{b'_1,\dots, b'_{n/4}\}$ and $B_3=\{b_1'',\dots, b''_{n/4}\}$ such that $\Gamma$ has large minimum degree, 
where $\Gamma$ denotes the bipartite graph on $(A, [n/4])$ in which $x i\in \Gamma$ for $x\in A$ and $i\in [n/4]$ if and only if $b_i b_i', b_i' b_i''\in N_{\h}(x)$. It is easy to find a Hamilton cycle in $\Gamma$, which gives rise to a loose Hamilton cycle in $\h$. 
In our actual proof, we first build a short loose path $P$ that covers all the vertices of $V_0: = V(\h)\setminus (A'\cup B')$ (and some vertices from $A'$ and $B'$) such that $|A'\setminus V(P)|/ |B'\setminus V(P)|$ is at least $1/3$ (Claim \ref{clm:path}). This is possible because of the minimum degree condition \eqref{eqdeg} and the fact that $V_0$ is small. We next extend $P$ to a loose path $Q$ such that $|A'\setminus V(Q)|/ |B'\setminus V(Q)|$ is about $1/3$ and finally find a loose Hamilton path on $V(\h)\setminus Q $ by following the approach for the ideal case  (Lemma \ref{lem:finish_3cyc}).

\subsection{Classifying vertices}
Let $\e_1=8\sqrt{\e_0}$ and $A= V(\h)\setminus B$. Assume that the partition $A$ and $B$ satisfies that $|B| = {\lfloor\frac34 n\rfloor}$ and \eqref{eqB}. In addition, assume that $e(B)$ is the smallest among all the partitions satisfying these conditions. We now define
\begin{align*}
&A':=\left\{ v\in V\mid \deg (v,B)\ge (1-\e_1)\binom{|B|}{2} \right\}, \\
&B':=\left\{ v\in V\mid\deg (v,B)\le \e_1\binom{|B|}{2} \right\}, \\
& V_0=V\setminus(A'\cup B').
\end{align*}

\begin{claim}\label{clm:eB}
$A\cap B'\neq \emptyset$ implies that $B\subseteq B'$, and $B\cap A'\neq \emptyset$ implies that $A\subseteq A'$.
\end{claim}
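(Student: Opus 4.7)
The plan is to exploit the minimality of $e(B)$ among partitions satisfying $|B|=\lfloor 3n/4\rfloor$ and \eqref{eqB}. The key device I would use is a single-vertex swap: for any $u\in B$ and $v\in A$, form the new partition $B^{*}=(B\setminus\{u\})\cup\{v\}$, $A^{*}=(A\setminus\{v\})\cup\{u\}$. Since $|B^{*}|=|B|$ the size condition is preserved, and whenever $e(B^{*})\le e(B)$ the bound \eqref{eqB} is inherited automatically, so the minimality of $e(B)$ forces $e(B^{*})\ge e(B)$. A direct count of which triples in $B$ are gained and lost gives
\[
e(B^{*})-e(B)=\deg(v,B\setminus\{u\})-\deg(u,B),
\]
and rearranging yields the swap inequality
\[
\deg(v,B\setminus\{u\})\ge\deg(u,B)\qquad\text{for every }u\in B,\ v\in A.
\]

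Both implications then follow immediately from this inequality. For the first, suppose $v\in A\cap B'$, so $\deg(v,B)\le\e_1\binom{|B|}{2}$. Then for every $u\in B$,
\[
\deg(u,B)\le\deg(v,B\setminus\{u\})\le\deg(v,B)\le\e_1\binom{|B|}{2},
\]
placing $u$ in $B'$; hence $B\subseteq B'$. For the second, suppose $u\in B\cap A'$, so $\deg(u,B)\ge(1-\e_1)\binom{|B|}{2}$. Then for every $v\in A$,
\[
\deg(v,B)\ge\deg(v,B\setminus\{u\})\ge\deg(u,B)\ge(1-\e_1)\binom{|B|}{2},
\]
placing $v$ in $A'$; hence $A\subseteq A'$.

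There is no substantive obstacle here; the only thing to verify is that the swapped partition is a legitimate competitor for the minimum of $e(\cdot)$. This is clear because $|B^{*}|=|B|$ is preserved and minimality is only invoked in the direction where $e(B^{*})\le e(B)$, so the upper bound \eqref{eqB} for $B^{*}$ is trivially satisfied.
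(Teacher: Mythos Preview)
Your proof is correct and takes essentially the same approach as the paper: both argue by a single-vertex swap and use the minimality of $e(B)$ to compare degrees. Your version is slightly more explicit in first isolating the general swap inequality $\deg(v,B\setminus\{u\})\ge\deg(u,B)$ and in noting why \eqref{eqB} is automatically inherited by $B^{*}$, whereas the paper argues each implication by a direct contradiction; but the underlying idea is identical.
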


\begin{proof}
First, assume that $A\cap B'\neq \emptyset$. Then there is some $u\in A$ which satisfies that $\deg(u,B)\le \e_1\binom{|B|}2$. If there exists some $v\in B\setminus B'$, namely, $\deg(v, B)>\e_1\binom{|B|}2$, then we can switch $u$ and $v$ and form a new partition $A''\cup B''$ such that $|B''|=|B|$ and $e(B'')<e(B)$, which contradicts the minimality of $e(B)$.

Second, assume that $B\cap A'\neq \emptyset$. Then some $u\in B$ satisfies that $\deg(u,B)\ge (1-\e_1)\binom{|B|}2$. Similarly, by the minimality of $e(B)$, we get that for any vertex $v\in A$, $\deg(v, B)\ge (1-\e_1)\binom{|B|}2$, which implies that $A\subseteq A'$.
\end{proof}

\begin{claim}\label{clm:size}
$\{|A\setminus A'|, |B\setminus  B'|, |A'\setminus  A|, |B'\setminus  B|\}\le \frac{\e_1}{64}|B|$ and $|V_0|\le \frac{\e_1}{32}|B|$.
\end{claim}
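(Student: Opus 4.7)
The plan is to prove each bound by a single averaging argument, using either \eqref{eqB} directly or an upper bound on $\overline e(ABB)$ derived from the minimum degree hypothesis \eqref{eqdeg}. Throughout, write $a=|A|$ and $b=|B|$.

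I would start with the two quantities concerning vertices of $B$. By the definitions, $|A'\setminus A|=|A'\cap B|$ counts vertices $v\in B$ with $\deg(v,B)\ge(1-\e_1)\binom{b}{2}$, while every vertex of $B\setminus B'$ satisfies $\deg(v,B)>\e_1\binom{b}{2}$. Since $\sum_{v\in B}\deg(v,B)=3e(B)<3\e_0\binom{b}{3}$ by \eqref{eqB}, Markov yields
\[
|A'\setminus A|<\frac{3\e_0\binom{b}{3}}{(1-\e_1)\binom{b}{2}}\le\frac{\e_0 b}{1-\e_1},\qquad |B\setminus B'|<\frac{3\e_0\binom{b}{3}}{\e_1\binom{b}{2}}\le\frac{\e_0 b}{\e_1}.
\]
Substituting $\e_0=\e_1^2/64$ together with $\e_1\le 1/2$ shows that both are at most $\e_1 b/64$.

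Next I would handle $|A\setminus A'|$ and $|B'\setminus B|$, which both count vertices $v\in A$ with large non-degree into $B$. The key step is to bound $\overline e(ABB)=\sum_{v\in A}\overline{\deg}(v,B)$ from above. Summing \eqref{eqdeg} over $v\in B$ and using the decomposition $\sum_{v\in B}\deg(v)=3e(B)+2e(ABB)+e(AAB)$ gives
\[
3e(B)+2e(ABB)+e(AAB)\ge b\left(\binom{n-1}{2}-\binom{b}{2}+c\right).
\]
Plugging in the trivial upper bounds $e(AAB)\le b\binom{a}{2}$ and $e(B)<\e_0\binom{b}{3}$, together with the identity $\binom{n-1}{2}-\binom{b}{2}=\binom{a-1}{2}+(a-1)b$ and $\binom{a}{2}-\binom{a-1}{2}=a-1$, a short algebraic simplification yields $e(ABB)>(a-1)\binom{b}{2}-\tfrac{3}{2}\e_0\binom{b}{3}$, so that
\[
\overline e(ABB)=a\binom{b}{2}-e(ABB)<\binom{b}{2}+\tfrac{3}{2}\e_0\binom{b}{3}.
\]
A second application of Markov gives that $|A\setminus A'|\cdot\e_1\binom{b}{2}$ and $|B'\setminus B|\cdot(1-\e_1)\binom{b}{2}$ are each at most $\overline e(ABB)$, so for $n$ sufficiently large (specifically $b\ge 128/\e_1^2$, which absorbs the additive $O(1/\e_1)$ term), both are at most $\e_1 b/64$.

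Finally, $|V_0|\le\e_1 b/32$ follows immediately from $V_0\subseteq(A\setminus A')\cup(B\setminus B')$, which holds because every vertex in $V_0\cap A$ lies outside $A'$ and every vertex in $V_0\cap B$ lies outside $B'$. The only non-routine step is extracting the upper bound on $\overline e(ABB)$ from \eqref{eqdeg}, but this is essentially a one-line computation once the trivial bounds on $e(AAB)$ and $e(B)$ are substituted; the rest of the argument is pure averaging.
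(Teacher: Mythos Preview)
Your proof is correct and follows essentially the same averaging strategy as the paper: bound $|B\setminus B'|$ and $|A'\cap B|$ via $\sum_{v\in B}\deg(v,B)=3e(B)$ and \eqref{eqB}, and bound $|A\setminus A'|$ and $|B'\cap A|$ by controlling $\overline e(ABB)$ through the minimum degree condition \eqref{eqdeg}. The only cosmetic difference is that the paper obtains $|A'\setminus A|$ and $|B'\setminus B|$ for free from the containments $A'\cap B\subseteq B\setminus B'$ and $B'\cap A\subseteq A\setminus A'$ (since $A'$ and $B'$ are disjoint), rather than running a separate Markov argument, and it phrases the $|A\setminus A'|$ bound as a contradiction (large $|A\setminus A'|$ forces some $b\in B$ with $\overline\deg(b)>\binom{|B|}{2}$) instead of first extracting the explicit upper bound $\overline e(ABB)<\binom{b}{2}+\tfrac32\e_0\binom{b}{3}$ as you do.
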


\begin{proof}
First assume that $|B\setminus B'|> \frac{\e_1}{64}|B|$. By the definition of $B'$ and the assumption $\e_1=8\sqrt{\e_0}$, we get that
\[
e(B) > \frac 13 \e_1\binom {|B|}2 \cdot \frac{\e_1}{64}|B| > \frac{\e_1^2}{64}\binom {|B|}3  = \e_0 \binom {|B|}3,
\]
which contradicts \eqref{eqB}.

Second, assume that $|A\setminus A'|> \frac{\e_1}{64}|B|$. Then by the definition of $A'$, for any vertex $v\notin A'$, we have that $\overline \deg(v,B)> \e_1\binom{|B|}{2}$. So we get
\[
\overline e(ABB)> \frac{\e_1}{64}|B| \cdot \e_1 \binom{|B|}2=\e_0|B| \binom{|B|}2> 3\e_0 \binom{|B|}3.
\]
Together with \eqref{eqB}, this implies that
\begin{align*}
\sum_{b\in B}\overline\deg(b)&\ge 3\overline e(B)+2 \overline e(ABB) 
                              > 3(1 - \e_0) \binom{|B|}3 + 6\e_0 \binom{|B|}3 = 3(1+\e_0)\binom{|B|}3.
\end{align*}
By the pigeonhole principle, there exists $b\in B$, such that
\[
\overline\deg(b)  > (1+\e_0) \binom{|B|}2 =(1+\e_0)\binom{\lfloor \frac{3n}4\rfloor-1}2> \binom{\lfloor \frac{3n}4\rfloor}2,
\]
where the last inequality follows from the assumption that $n$ is large enough. This contradicts \eqref{eqdeg}.

Consequently,
\begin{align*}
&|A'\setminus A|=|A'\cap B|\le |B\setminus B'|\le \frac{\e_1}{64}|B|,   \\
&|B'\setminus B|=|A\cap B'|\le |A\setminus A'|\le \frac{\e_1}{64}|B|, \\
&|V_0|=|A\setminus A'|+|B\setminus B'|\le \frac{\e_1}{64}|B|+\frac{\e_1}{64}|B|=\frac{\e_1}{32}|B|. \qedhere
\end{align*}
\end{proof}

We next show that we can connect any two vertices of $B'$ with a loose path of length two without using any fixed $\frac n8$ vertices of $V$.

\begin{claim}\label{clm:conn}
For every pair of vertices $u, v\in B'$ and every vertex set $S\subseteq V$ with $|S|\le n/8$, there exist $a\in A'\setminus S$ and $b_1,b_2\in B'\setminus S$ such that $ub_1a, ab_2v\in E(\h)$.
\end{claim}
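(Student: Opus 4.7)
The plan is to first pick $a \in A' \setminus S$ whose bipartite link with $B'$ is nearly complete under both $u$ and $v$, and then to choose distinct $b_1, b_2 \in B' \setminus S$ greedily. The driving quantitative fact is that any $w \in B'$ has at most $O(\e_1 n^2)$ non-edges to pairs in $A \times B$. Indeed, $\deg(w) \ge \binom{n-1}{2} - \binom{|B|}{2}$ by \eqref{eqdeg}, while $\deg(w, BB) \le \e_1 \binom{|B|}{2}$ by the definition of $B'$ and $\deg(w, AA) \le \binom{|A|}{2}$ trivially; using $|A| \approx n/4$ and $|B| \approx 3n/4$, direct subtraction yields $\deg(w, AB) \ge |A||B| - O(\e_1 n^2)$.

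Next, this bound transfers from $(A,B)$ to $(A',B')$: Claim~\ref{clm:size} gives $|A \triangle A'|, |B \triangle B'| = O(\e_1 n)$, at a cost of only $O(\e_1 n^2)$ ordered pairs either way, so $\overline{\deg}(u, A' B') = O(\e_1 n^2)$ and likewise for $v$. Call $a \in A'$ \emph{bad for $u$} if $\overline{\deg}(u, \{a\}, B') > n/2$; by Markov's inequality the number of bad $a$ is $O(\e_1 n)$, and the same holds for $v$.

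Since $|A'| \ge n/4 - O(\e_1 n)$, $|S| \le n/8$, and the bad vertices for each of $u$ and $v$ number $O(\e_1 n)$, provided $\e_1$ (equivalently $\beta$) is small enough there exists $a \in A' \setminus S$ that is good for both $u$ and $v$. For such $a$, each of the sets $\{b \in B' : uab \in E\}$ and $\{b \in B' : avb \in E\}$ has size at least $|B'| - n/2 \ge n/4 - O(\e_1 n)$; removing the at most $n/8$ vertices of $S$ still leaves $\ge n/8 - O(\e_1 n)$ candidates in each, so one may pick distinct $b_1, b_2 \in B' \setminus S$ realizing the edges $u b_1 a$ and $a b_2 v$.

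The proof is structurally short; the only care point is arithmetic bookkeeping. The constraint $|S| \le n/8$ is nearly tight against $|A'| \approx n/4$, so the three subtracted terms -- $|S|$, the bad vertices for $u$, and the bad vertices for $v$ -- must all be well controlled. They are, because each defect is $O(\e_1 n)$ and $\e_1 = 8\sqrt{\e_0} = 8\sqrt{18\beta}$ is taken small; no conceptual obstacle arises.
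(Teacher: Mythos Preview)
Your proof is correct and follows essentially the same approach as the paper: both first establish $\overline{\deg}(w, A'B') \le 2\e_1\binom{|B|}{2}$ for $w\in B'$ via \eqref{eqdeg}, the definition of $B'$, and Claim~\ref{clm:size}, and then finish with a short averaging argument. The only cosmetic difference is in that last step: the paper forms the bipartite graph $G$ on $(A'\setminus S, B'\setminus S)$ of pairs $ab$ with $uab, vab\in E(\h)$, shows $e(G)>|A'\setminus S|$, and picks $a$ with $\deg_G(a)\ge 2$; you instead use Markov to discard the $O(\e_1 n)$ vertices of $A'$ that are bad for $u$ or $v$, fix a good $a\in A'\setminus S$, and then select $b_1,b_2$ separately from the two large link neighborhoods in $B'\setminus S$. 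Both arrangements are standard and equivalent in strength.
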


\begin{proof}
For any $x\in B'$, by \eqref{eqdeg}, we have that $\overline \deg(x) \le \binom{\lfloor\frac34 n\rfloor}2=\binom{|B|}2$. So by the definition of $B'$,
\[
\overline\deg(x, AB)\le \overline\deg(x) - \overline\deg(x, B) \le \binom {|B|}2- (1-\e_1)\binom {|B|}2=\e_1 \binom {|B|}2.
\]
By Claim \ref{clm:size}, we get that
\begin{align}
\overline\deg(x, A'B')&\le \overline\deg(x, AB)+|A'\setminus A|\cdot |B'|+|B'\setminus B|\cdot |A'|  \nonumber \\
&\le \e_1 \binom {|B|}2+ \frac{\e_1}{64}|B|n\le 2\e_1\binom {|B|}2. \label{eq:barAB}
\end{align}
Consider a bipartite graph $G$ on $A\setminus S$ and $B\setminus S$ with pairs $ab\in E(G)$ if and only if $uab,vab\in E(\h)$. Since $|S|\le \frac n8$, we have $|A\setminus S|\ge \frac {|A|}2\ge \frac{|B|}6$ and $|B\setminus S|> \frac{|B|}2$, so $|A\setminus S|\cdot |B\setminus S|> \frac16 \binom {|B|}2>8\e_1\binom{|B|}2$. Consequently,
\[
e(G)\ge |A\setminus S|\cdot |B\setminus S|-4\e_1\binom{|B|}2 \ge \frac12|A\setminus S|\cdot |B\setminus S|>|A\setminus S|.
\] 
Hence there exists a vertex $a\in A\setminus S$ such that $\deg_G(a)\ge 2$. By picking $b_1, b_2\in N_G(a)$ we 
finish the proof.
\end{proof}

\subsection{Building a short path}

\begin{claim}\label{clm:Bpath}
Suppose that $|A\cap B'|=q>0$. Then there exists a family $\mathcal P_1$ of vertex disjoint loose paths in $B'$, where
\[
\mathcal P_1 \text{ consists of  }\begin{cases}
\text{one edge} &\text {if } q=1 \text{ and }n\notin 4\mathbb N\\
\text{two edges }e_1, e_2 \text{ with } |e_1\cap e_2|\le 1 &\text{if }q=1 \text{ and } n\in 4\mathbb N\\
2q \text{ disjoint edges} &\text{if } q\ge 2
\end{cases}
\]
\end{claim}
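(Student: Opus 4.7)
The heart of the argument is a uniform lower bound on the ``$B$-degree'' of each special vertex: for every $v \in A \cap B'$, I would establish that
\[
\deg_{\h}(v, B) \ge c.
\]
This follows from the minimum-degree condition \eqref{eqdeg} combined with the trivial bounds $\deg(v, AA) \le \binom{|A|-1}{2}$ and $\deg(v, AB) \le (|A|-1)|B|$ (valid since $v \in A$) and the identity
\[
\binom{n-1}{2} = \binom{|A|-1}{2} + (|A|-1)|B| + \binom{|B|}{2},
\]
which holds because $|A|+|B|=n$ (count pairs of $V\setminus\{v\}$ by type). Subtracting gives the desired bound.

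For $q=1$ and $n\notin 4\mathbb{N}$ ($c=1$), the bound yields an edge $\{v_1, b_1, b_2\}\subseteq B'$ with $b_1,b_2\in B\subseteq B'$; take this as $\mathcal P_1$.

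For $q=1$ and $n\in 4\mathbb{N}$ ($c=2$), the link graph $L$ of $v_1$ on $B$ has at least two edges. If $L$ contains a matching of size two, the corresponding hyperedges share only $v_1$ and realize $|e_1 \cap e_2| = 1$. Otherwise, by the K\"onig--Egerv\'ary theorem, $L$ is a star centered at some $b\in B$. Since the partition $(A,B)$ was chosen to minimize $e(B)$, swapping $v_1\leftrightarrow b$ preserves $|B|$ and, because the star restriction gives $\deg(v_1, B\setminus\{b\})=0$, yields
\[
e((B\setminus\{b\})\cup\{v_1\}) = e(B) - \deg(b,B),
\]
so minimality forces $\deg(b,B)=0$. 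Repeating the degree decomposition for each $b'\in B\setminus\{b\}$ with the star restriction on $v_1$ (which caps $\deg(b',AB)$ by $1+(|A|-1)(|B|-1)$ instead of $|A|(|B|-1)$) yields $\deg(b',B)\ge c-1 = 1$, and summing gives $3 e(B)\ge |B|-1\ge 1$. Any edge $e_2\in E(\h[B])$ avoids $b$ and $v_1$, so picking any star leaf $w$ for $e_1=\{v_1,b,w\}$ gives $|e_1\cap e_2|\le 1$.

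For $q\ge 2$, I would proceed in two phases: first, greedily extract $q$ pairwise disjoint edges $f_i=\{v_i,b_i^1,b_i^2\}$ through the specials (feasible because each $\deg(v_i,B)\ge c\ge 1$ and only $O(q)$ vertices are excluded at each step); second, locate $q$ further pairwise disjoint edges in $\h[B']$ avoiding the used vertices. The second phase must find edges entirely in $B$, since each special can lie in at most one disjoint edge. Here I would extend the Case~2 minimality argument: when many $v_i$'s have star-like link graphs on $B$, the combined restrictions on each ``generic'' $b'\in B$ compound and force $\deg(b',B)$ to grow with $q$, yielding $e(B)=\Omega(q|B|)$ and hence plenty of disjoint edges by a greedy matching; in the complementary case, some $v_i$'s link graph is rich enough to contribute additional structure. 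The principal obstacle lies precisely in this last step---the pointwise degree bound $\deg(v,B)\ge c$ does not by itself control the global pattern of edges in $B$, and extracting the required $q$ extra disjoint edges demands a careful double-counting balancing the minimum-degree condition against the minimality of $e(B)$.
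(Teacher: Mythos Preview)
Your $q=1$ arguments are correct, and the device of using the minimality of $e(B)$ to handle the star case is rather nice. However, the $q\ge 2$ case has a genuine gap---indeed two. First, Phase~1 already fails: with only $\deg(v_i,B)\ge c\le 2$, you cannot guarantee an edge through $v_i$ disjoint from the $2(i-1)$ vertices of $B$ already consumed; the link of $v_i$ on $B$ could be contained in those vertices. Second, as you acknowledge, Phase~2 is only a sketch, and the balancing you describe does not obviously close.

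The reason you are working so hard is that you are applying the degree decomposition only to the $q$ special vertices in $A\cap B'$, which yields the weak bound $\deg(v,B)\ge c$. The point you are missing is that since $B\subseteq B'$ (Claim~\ref{clm:eB}) we have $|B'|=\lfloor\tfrac34 n\rfloor+q$, and therefore the degree condition applied to \emph{any} $b\in B'$ gives
\[
\deg(b,B')\ \ge\ \delta_1(\h)-\left(\binom{n-1}{2}-\binom{|B'|-1}{2}\right)\ =\ \binom{|B'|-1}{2}-\binom{\lfloor\tfrac34 n\rfloor}{2}+c\ >\ (q-1)\left\lfloor\tfrac{3n}{4}\right\rfloor.
\]
For $q\ge 2$ this is \emph{linear in $n$}, not a constant. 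Hence $e(B')>\tfrac13|B'|(q-1)\lfloor\tfrac34 n\rfloor$, and since every vertex of $B'$ satisfies $\deg(b,B')<2\varepsilon_1\binom{|B|}{2}$ by~\eqref{eq:analeft}, removing at most $3(2q-1)$ vertices destroys at most $6(2q-1)\varepsilon_1\binom{|B|}{2}$ edges. A direct comparison shows this is less than $e(B')$ provided $\varepsilon_1<\tfrac{q-1}{9(2q-1)}$, so a greedy selection of $2q$ disjoint edges in $B'$ succeeds---no splitting into ``through specials'' and ``inside $B$'' is needed. The same uniform bound (with $q=1$, giving $\deg(b,B')\ge c$ for all $b\in B'$) also simplifies your $q=1$ cases: rather than invoking minimality, one argues directly from the structure of a hypergraph on $B'$ in which every vertex has degree~$\ge 2$.
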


\begin{proof}
Let $|A\cap B'|=q>0$. Since $A\cap B'\neq \emptyset$, by Claim \ref{clm:eB}, we get $B\subseteq B'$, which implies $|B'|=\lfloor\frac34 n\rfloor+q$.

By Claim \ref{clm:size}, 
we get that $q=|A\cap B'|\le |A\setminus A'|\le \frac{\e_1}{64}|B|$.
Hence for any vertex $b$ in $B'$,
\begin{align} \label{eq:analeft}
\deg\left(b, B' \right)&\le \deg\left(b,B \right)+|B'\setminus B|(|B'|-1)  \nonumber    \\
                                              & \le \e_1 \binom {|B|}2 + q (|B'| - 1)< 2\e_1 \binom {|B|}2.
\end{align}

Now we assume that $q=1$, so $|B'|-1=\lfloor\frac34 n \rfloor$. By \eqref{eqdeg}, for any $b\in B'$,
\begin{align*}
\deg \left(b, B'\right) &\ge \binom{n-1}2 - \binom{\lfloor\frac34 n \rfloor}2 + c - \left[\binom{n-1}2 - \binom{|B'|-1}2  \right] =c,
\end{align*}
where $c=1$ if $n\notin 4\mathbb N$ and $c=2$ otherwise. The $n\notin 4\mathbb N$ case is trivial since
$B'$ actually contains at least $|B'|/3>1$ edges. If $n\in 4\mathbb N$, then we have $\deg (b, B' )\ge 2$. Assume that $B'$ does not contain the desired structure. Then any two distinct edges of $B'$ share exactly two vertices. Fix an edge $e_0= v_1v_2v_3$ of $B'$ and two vertices $u, u' \in B'\setminus e_0$. Then every edge of $B'$ containing $u$ must have its two other vertices in $e_0$. Since $\deg (u, B' )\ge 2$, the link graph of $u$ contains at least two pairs of vertices of $e_0$. So does the link graph of $u'$. We thus find a loose path of length two from $u$ to $u'$ because two distinct pairs on $e_0$ share exactly one vertex.

Second, assume that $q>1$. In this case we construct $2q$ disjoint edges  greedily. By \eqref{eqdeg} and $|B'|=\lfloor\frac34 n\rfloor+q$, for any $b\in B'$,
\begin{align*}
\deg \left(b, B' \right) &\ge \binom{n-1}2 - \binom{\lfloor\frac34 n \rfloor}2 + c - \left[\binom{n-1}2 - \binom{|B'|-1}2  \right] \\
                                                &>\binom{|B'|-1}2 - \binom{\lfloor\frac34 n \rfloor}2\\
                                                &\ge (q-1)\left\lfloor\frac34 n \right\rfloor,
\end{align*}
which implies that $e(B')> \frac13 |B'| (q-1)\lfloor\frac34 n\rfloor$. Suppose we have found $i<2q$ disjoint edges of $B'$. By \eqref{eq:analeft}, there are at most $3(2q - 1)\cdot 2\e_1 \binom {|B|}2$ edges of $B'$ intersecting these $i$ edges. Hence, there are at least
\begin{align*}
e(B')-3(2q - 1)\cdot 2\e_1 \binom {|B|}2 &\ge \frac13 |B'| (q-1) \left\lfloor\frac34 n \right\rfloor -   6(2q - 1)\e_1 \binom {|B|}2\\
&\ge \frac{2(q-1)}{3} \binom{|B|}2 -  6(2q - 1)\e_1 \binom {|B|}2\\
&=\frac23 \left[(q-1) -  9(2q - 1)\e_1\right] \binom {|B|}2
\end{align*}
edges not intersecting the existing $i$ edges. This quantity is positive provided that $\e_1<\frac{q-1}{9(2q-1)}$. Thus, $\e_1<\frac1{27}$ suffices since the minimum of $\frac{q-1}{9(2q-1)}$, $q>1$ is $\frac1{27}$ attained by $q=2$.
\end{proof}

\begin{remark}
Claim \ref{clm:Bpath} is the only place where the constant $c$ from \eqref{eqdeg} is used.
\end{remark}

The goal of this subsection is to prove the following claim.
\begin{claim}\label{clm:path} 
There exists a loose path $P$ in $\h$ with the following properties:
\begin{itemize}
\item $V_0\subseteq V(P)$,
\item $|V(P)|\le \frac{\e_1}{4} |B|$,
\item $|B'\setminus V(P)|\le 3|A'\setminus V(P)|-1$,
\item both ends of $P$ are in $B'$.
\end{itemize}
\end{claim}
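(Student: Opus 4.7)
The plan is to construct $P$ as a concatenation of short loose pieces, each having its two ends in $B'$. There are three kinds of pieces. First, if $q := |A \cap B'| \ge 1$, we use the family $\mathcal P_1$ of loose paths in $B'$ supplied by Claim~\ref{clm:Bpath}; these lie entirely in $B'$ and their purpose is to contribute enough $B'$-bias to $|V(P)|$ so that the residual ratio comes out right. Second, for every $v \in V_0$, we form one short loose piece absorbing $v$: when $v \in A$, a two-edge loose path $b_1 b_2 v b_3 b_4$ with all $b_i \in B'$ (so $v$ is a shared vertex of two edges whose other four vertices lie in $B'$); when $v \in B$, a single edge $b_1 v b_2$ with $b_1, b_2 \in B'$. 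Third, we glue consecutive pieces together with the length-two connectors of Claim~\ref{clm:conn}, each contributing one $A'$-vertex and two $B'$-vertices. In the boundary case $V_0 = \emptyset$ and $q = 0$ (so also $r := |B \cap A'| = 0$ by Claim~\ref{clm:eB}), we take $P$ to be a single edge inside $B'$ when $n \in 4\mathbb N$ and any single edge with both ends in $B'$ otherwise; the ``$+c$'' boost in \eqref{eqdeg} is exactly what guarantees the needed edge exists.

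Each type-2 piece is found greedily. Since $v \in V_0$ has at least $\e_1 \binom{|B|}{2}$ pairs of $B$-neighbors by definition, essentially the same counting used for \eqref{eq:barAB} shows that the number of pairs $\{b_1, b_2\} \in \binom{B'}{2}$ with $v b_1 b_2 \in E(\h)$ is at least $\frac{\e_1}{4}|B|^2$, which far exceeds the $O(\e_1 |B|)$ vertices used so far at any stage. The concatenation step applies Claim~\ref{clm:conn} with the forbidden set being the vertices already used; this set stays well below $n/8$ because the number of pieces is $O(q + |V_0|) = O(\e_1 |B|)$ and each piece and connector has bounded size. The resulting loose path has both global ends in $B'$, since every piece and every connector of Claim~\ref{clm:conn} attaches only through $B'$-endpoints.

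The bound $|V(P)| \le \frac{\e_1}{4}|B|$ follows from the size bounds of Claim~\ref{clm:size} and the constant size of each piece (adjusting constants as needed). The inequality $|B' \setminus V(P)| \le 3|A' \setminus V(P)| - 1$ rearranges to $b - 3a \ge |B'| - 3|A'| + 1$, where $a, b$ count $A'$- and $B'$-vertices in $V(P)$. Setting $v_A := |V_0 \cap A|$ and $v_B := |V_0 \cap B|$, one computes
\[
|B'| - 3|A'| = (|B| - 3|A|) + 4q - 4r + 3v_A - v_B,
\]
with $|B| - 3|A| \in \{0, -2\}$ depending on $n \bmod 4$. By Claim~\ref{clm:eB} at most one of $q, r$ is nonzero, splitting the analysis into three regimes. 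The per-piece contributions to $b - 3a$ are $+6q$ from $\mathcal P_1$ when $q \ge 2$ (with smaller but still adequate contributions in the $q=1$ sub-cases, distinguished by the parity of $n$), $+4$ per type-2 piece with $v \in A$, $+2$ per type-2 piece with $v \in B$, and $-1$ per connector. A routine tally matches these to the right-hand side, with equality in the tight sub-cases. The main obstacle will be carrying out this bookkeeping; in particular, it is precisely the piecewise structure of Claim~\ref{clm:Bpath} that makes the $q = 1$ sub-cases work in both parity classes of $n$.
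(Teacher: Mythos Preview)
Your approach is essentially the paper's: assemble short $B'$-ended pieces (the paths of Claim~\ref{clm:Bpath} when $q\ge1$, plus one piece per vertex of $V_0$) and glue them via Claim~\ref{clm:conn}. Your one structural variation---using a single edge $b_1 v b_2$ for $v\in V_0\cap B$ instead of a length-two path---is fine, and your identity $|B'|-3|A'|=(|B|-3|A|)+4q-4r+3v_A-v_B$ is correct and makes the bookkeeping transparent. (The paper instead splits into the two cases $q\ge1$ and $q=0$ and uses length-two pieces uniformly for every $v\in V_0$.)

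There is, however, a genuine gap in your boundary case $V_0=\emptyset$, $q=0$. First, the parenthetical ``so also $r=0$ by Claim~\ref{clm:eB}'' misreads that claim: it says $B\cap A'\neq\emptyset\Rightarrow A\subseteq A'$, not the converse, so $q=0$ does not force $r=0$. This is harmless since $r>0$ only lowers the target $|B'|-3|A'|+1$. The real problem is your assertion that when $n\in4\mathbb N$ the ``$+c$'' in~\eqref{eqdeg} guarantees an edge inside $B'$. That argument powers Claim~\ref{clm:Bpath} precisely because there $|B'|=|B|+q\ge|B|+1$; but here $q=0$ gives $|B'|=|B|=\tfrac{3n}{4}$, and for $b\in B'$ the degree bound only yields
\[
\deg(b,B')\ \ge\ \binom{|B|-1}{2}-\binom{|B|}{2}+2\ =\ 3-\tfrac{3n}{4}\ <\ 0.
\]
Indeed $B'$ may be independent (take $\h$ with $e(B)=0$; this satisfies~\eqref{eqdeg} and is $\beta$-extremal), so no $B'$-edge need exist and your construction fails. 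The fix is easy: instead take $P$ to be a length-two path $b_1 b_2\, a\, b_3 b_4$ with $a\in A'$ and $b_i\in B'$, which exists since $\deg(a,B')\ge(1-\e_1)\binom{|B|}{2}$; this gives $b-3a=4-3=1$, exactly the required target. (The paper's own Case~2 tacitly assumes $|V_0|\ge1$, so you are in good company in needing to patch this sub-case---but the patch you wrote is incorrect.)
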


\begin{proof}
We split into two cases here.

\smallskip
\noindent\textbf{Case 1.} $A\cap B'\neq \emptyset$.
\smallskip

By Claim \ref{clm:eB}, $A\cap B'\neq \emptyset$ implies that $B\subseteq B'$, which implies that $V_0\subseteq A$. Let $q=|A\cap B'|$. We first apply Claim \ref{clm:Bpath} and find a family $\mathcal P_1$ of vertex disjoint loose paths on at most $6q$ vertices of $B'$. Next we put each vertex of $V_0$ into a loose path of length two with four vertices from $B$ (so in $B'$) such that these paths are pairwise vertex disjoint and also vertex disjoint from the paths in $\mathcal P_1$. Let $V_0=\{x_1,\dots, x_{|V_0|}\}$. Suppose that we have found loose paths for $x_1, \dots, x_i$ with $i<|V_0|$. 
Since $A\setminus A' =V_0 \dot\cup (A\cap B')$, by Claim \ref{clm:size}, we have 
\begin{equation}
\label{eq:qV0}
q+|V_0| = |A\setminus A'|\le \frac{\e_1}{64}|B|.
\end{equation}
Thus,
\[
4i+6q< 4|V_0|+6q\le 6(|V_0|+q) \le \frac{3\e_1}{32} |B|
\] 
and consequently at most $\frac{3\e_1}{32} |B| (|B|-1) = \frac{3\e_1}{16}\binom{|B|}2$ pairs of $B$ intersect the existing paths. By the definition of $V_0$, $\deg(x_{i+1}, B)> \e_1\binom{|B|}2$. Since every graph on $n\ge 4$ vertices and $m\ge n$ edges contains two vertex disjoint edges, we can find two vertex disjoint pairs in the link graph of $x_{i+1}$ in $B$.

Denote by $\mathcal P_2$ the family of the loose paths that we obtained so far. Now we want to glue paths of $\mathcal P_2$ together to a single loose path. For this purpose, we apply Claim \ref{clm:conn} repeatedly to connect the ends of two loose paths while avoiding previously used vertices. This is possible because $|V(\mathcal P_2)|\le 5|V_0|+6q$ and at most $3(|V_0|+2q-1)$ vertices will be used to connect the paths in $\mathcal P_2$. By \eqref{eq:qV0}, the resulting loose path $P$ satisfies 
\[
|V(P)|\le 8|V_0|+12q -3 < 12 \cdot \frac{\e_1}{64} |B|< \frac{\e_1}4 |B|.
\]
We next show that $|B'\setminus V(P)|\le 3|A'\setminus V(P)|-1$. To prove this, we split into three cases according to the structure of $\mathcal P_1$. Note that $|B'|=\lfloor \frac{3n}4\rfloor+q$ and $|A'|=\lceil \frac n4 \rceil - |V_0|-q$.

First, assume that $q>1$. Our construction shows that $\mathcal P_1$ consists of $2q$ disjoint edges in $B'$. So $|V(P)\cap A'|=|V_0|+2q-1$ and $|V(P)\cap B'|=4|V_0|+3\cdot 2q+2(|V_0|+2q-1)=6|V_0|+10q-2$. Thus,
\begin{align*}
|B'\setminus V(P)|&= \left\lfloor\frac{3n}4 \right\rfloor+q- (6|V_0|+10q-2) \\
&\le 3\left(\left\lceil\frac n4 \right\rceil-2|V_0|-3q+1\right)-1 = 3|A'\setminus V(P)|-1.
\end{align*}

Second, assume that $q=1$ and $n\in 4\mathbb N$. Then $\mathcal P_1$ consists of a loose path of length two or two disjoint edges. For the first case, we have that $|V(P)\cap A'|=|V_0|$ and $|V(P)\cap B'|=4|V_0|+2|V_0|+5=6|V_0|+5$. Thus,
\[
|B'\setminus V(P)|= \frac{3n}4+1- (6|V_0|+5) =3\left(\frac n4-2|V_0|-1\right)-1 = 3|A'\setminus V(P)|-1.
\]
In the second case, we have that $|V(P)\cap A'|=|V_0|+1$ and $|V(P)\cap B'|=4|V_0|+2(|V_0|+1)+6=6|V_0|+8$. Thus,
\[
|B'\setminus V(P)|= \frac{3n}4+1- (6|V_0|+8) =3\left(\frac n4-2|V_0|-2\right)-1 = 3|A'\setminus V(P)|-1.
\]
Third, assume that $q=1$ and $n\notin 4\mathbb N$, so $\mathcal P_1$ contains only one edge. We have $|V(P)\cap A'|=|V_0|$ and $|V(P)\cap B'|=4|V_0|+2|V_0|+3=6|V_0|+3$. Let $n=4k+2$ with some $k\in \mathbb Z$, so $|A|=k+1$, $|B|=3k+1$, $|B'|=3k+2$ and $|A'|=k-|V_0|$. Thus,
\[
|B'\setminus V(P)|=3k+2- (6|V_0|+3) =3(k-2|V_0|)-1 = 3|A'\setminus V(P)|-1.
\]

\medskip

\noindent\textbf{Case 2.} $A\cap B'= \emptyset$.
\smallskip

Note that $A\cap B'= \emptyset$ means that $B'\subseteq B$. The difference from the first case is that we do not need to construct $\mathcal P_1$.

First we will put every vertex in $V_0$ into a loose path of length two together with four vertices from $B'$. By Claim \ref{clm:size}, $|B\setminus B'|\le \frac{\e_1}{64}|B|$ and thus for any vertex $x\in V_0$,
\begin{equation}
\deg(x, B') \ge \deg(x,B)-|B\setminus B'|\cdot (|B|-1) \ge \e_1\binom{|B|}2 - \frac{\e_1}{32}\binom{|B|}2.
\label{eq:medium}
\end{equation}
Similar as in Case 1, let $V_0=\{x_1,\dots, x_{|V_0|}\}$ and suppose that we have found loose paths for $x_1, \dots, x_i$ with $i<|V_0|$. By Claim \ref{clm:size}, $|V_0|\le \frac{\e_1}{32}|B|$. Thus, we have $4i< 4|V_0|\le\frac{\e_1}{8} |B|$
and consequently at most $\frac{\e_1}{8} |B| (|B'|-1) \le \frac{\e_1}{4} \binom{|B|}2$ pairs of $B'$ intersect the existing $i$ loose paths. Then by \eqref{eq:medium}, we may find two vertex disjoint pairs in the link graph of $x_{i+1}$ in $B'$.

As in Case 1, we connect the paths that we obtained to a single loose path by applying Claim \ref{clm:conn} repeatedly. The resulting loose path $P$ satisfies that 
\[ 
|V(P)|= 5|V_0|+3(|V_0|-1)< 8 \cdot \frac{\e_1}{32} |B| =\frac{\e_1}4 |B|. 
\]
We next show that $|B'\setminus V(P)|\le 3|A'\setminus V(P)|-1$. Note that $|V(P)\cap A'|=|V_0|-1$ and $|V(P)\cap B'|=4|V_0|+2(|V_0|-1)=6|V_0|-2$. Since $B'\subseteq B$, we have $|A'|\ge |A'\cap A|=\lceil\frac n4\rceil - |V_0|$. Thus,
\begin{align*}
|B'\setminus V(P)|&= |B'| - (6|V_0|-2) \le 3\left \lceil\frac n4 \right\rceil -6|V_0|+2 \\
&\le 3\left(|A'|+|V_0|-2|V_0|+1\right)-1 \\ 
&= 3(|A'|-|V(P)\cap A'|)-1= 3|A'\setminus V(P)|-1. \qedhere
\end{align*}
\end{proof}

\subsection{Completing a Hamilton cycle} 
Let $P$ be the loose path given by Claim~\ref{clm:path}.
Suppose that $|B'\setminus V(P)|= 3|A'\setminus V(P)|-l$ for some integer $l\ge 1$. 
Since $P$ is a loose path, $|V(P)|$ is odd. Since $V= A' \cup B' \cup V_0$ and $V_0\subset V(P)$, we have 
\begin{equation}
\label{eq:VP}
|V(P)| + |B'\setminus V(P)|+ |A'\setminus V(P)| = n.
\end{equation}
Since $n$ is even, it follows that $|B'\setminus V(P)|+ |A'\setminus V(P)|$ is odd, which implies that $l=3|A'\setminus V(P)|-|B'\setminus V(P)|$ is odd. 

If $l>1$, then we extend $P$ as follows. Starting from an end $u$ of $P$ (note that $u\in B'$), we add an edge by using one vertex from $A'$ and one from $B'$. This is guaranteed by Claim~\ref{clm:conn}, which actually provides a loose path starting from $u$. We repeat this $\frac{l-1}2$ times. The resulting loose path $P'$ satisfies $|B'\setminus V(P')|= 3|A'\setminus V(P')|-1$. We claim that $|V(P')|\le \frac{3\e_1}{4} |B|$ (thus Claim~\ref{clm:conn} can be applied repeatedly).  Indeed, by \eqref{eq:VP} and $|V(P)|\le \frac{\e_1}4|B|$,
\begin{align*}
l &= 3|A'\setminus V(P)|-|B'\setminus V(P)|=4|A'\setminus V(P)| - (n - |V(P)|)\\
& \le 4|A'|-n+ \frac{\e_1}4 |B|.
\end{align*}
Since $|A'|\le |A| + |B\setminus B'| \le \lceil\frac n4\rceil+\frac{\e_1}{64} |B|$ from Claim \ref{clm:size}, we have $l\le \frac{\e_1}2 |B|$. Since $|V(P')| = |V(P)| + l-1$, we derive that $|V(P')|\le \frac{3\e_1}{4} |B|$.

Finally, since both ends of $P'$ are vertices in $B'$, we extend $P'$ by one more $ABB$ edge from each end, respectively.
Denote the ends of the resulting path $Q$ be $x_0, x_1\in A'$. Let $A_1=(A'\setminus V(Q))\cup \{x_0, x_1\}$ and $B_1=B'\setminus V(Q)$. Note that we have $|B_1|= 3(|A_1|-1)$. By Claim \ref{clm:size}, we have $|B_1\setminus B|\le |B'\setminus B|\le \frac{\e_1}{64} |B|$. Furthermore,
\begin{equation}
\label{eq:B1}
|B_1|\ge |B'|- \frac{3\e_1}{4} |B|\ge |B| - \frac{\e_1}{64}|B| - \frac{3\e_1}{4} |B| - 2> (1-\e_1)|B|.
\end{equation}
For a vertex $v\in A_1$, since $\overline \deg(v, B) \le \e_1 \binom {|B|}2$, we have 
\begin{align*}
\overline \deg\left(v, B_1 \right)& \le \overline \deg(v, B)+ |B_1\setminus B|\cdot (|B_1|-1) \\
&\le \e_1\binom {|B|}2 + \frac{\e_1}{64} |B| \left(1+ \frac{\e_1}{64}\right) |B| \\
& < 2\e_1\binom{|B|}2<3\e_1\binom{|B_1|}2,
\end{align*}
where the last inequality follows from \eqref{eq:B1}.
In addition, \eqref{eq:barAB} and \eqref{eq:B1} imply that for any vertex $v\in B_1$,
\[
\overline \deg(v, A_1B_1)\le \overline \deg(v, A'B')\le 2\e_1 \binom{|B|}2< \e_1 |B|^2 <4\e_1 |A_1| |B_1|.
\] 
We finally complete the proof of Theorem \ref{lemE} by applying the following lemma with $X=A_1$, $Z=B_1$, and $\rho=4\e_1$.

\begin{lemma}\label{lem:finish_3cyc}
Let  $\rho>0$ be sufficiently small and $n$ be sufficiently large. 
Suppose that $\h$ be a 3-graph on $n$ vertices with $V(\h)=X\dot\cup Z$ such that $|Z|= 3(|X|-1)$. Further, assume that for every vertex $v\in X$, $\overline \deg(v, Z)\le \rho\binom{|Z|}2$ and for every vertex $v\in Z$, $\overline \deg (v, XZ)\le \rho |X| |Z|$. Then given any two vertices $x_0, x_1\in X$, there is a loose Hamilton path from $x_0$ to $x_1$.
\end{lemma}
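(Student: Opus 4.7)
The plan is to reduce the construction of a loose Hamilton path in $\h$ to finding an ordinary Hamilton path in an auxiliary bipartite graph. Write $k := |X|$, so $|Z| = 3(k-1)$ and $n = 4k-3$. Any loose Hamilton path from $x_0$ to $x_1$ consists of $2k-2$ edges, and the most natural layout is
\[
x_0,\; a_1, b_1, c_1,\; y_1,\; a_2, b_2, c_2,\; y_2,\; \ldots,\; y_{k-2},\; a_{k-1}, b_{k-1}, c_{k-1},\; x_1,
\]
where $y_0 := x_0$, $y_{k-1} := x_1$, and $y_1, \ldots, y_{k-2}$ is a permutation of $X \setminus \{x_0, x_1\}$, while the triples $(a_i, b_i, c_i)$ form a partition of $Z$. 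For this to be a loose path we need $y_{i-1} a_i b_i \in \h$ and $b_i c_i y_i \in \h$ for each $i \in [k-1]$.

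First, I arbitrarily split $Z = Z_1 \dot\cup Z_2 \dot\cup Z_3$ with $|Z_j| = k-1$, and search for bijections $\pi_j \colon [k-1] \to Z_j$ such that, defining $(a_i, b_i, c_i) := (\pi_1(i), \pi_2(i), \pi_3(i))$, the auxiliary bipartite graph $\Gamma$ on $X \cup [k-1]$ with $xi \in E(\Gamma) \iff x a_i b_i,\, x b_i c_i \in \h$ satisfies $\deg_\Gamma(x) \ge (1-\sqrt\rho)(k-1)$ for every $x \in X$ and $\deg_\Gamma(i) \ge (1-\sqrt\rho)|X|$ for every $i \in [k-1]$. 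I would establish existence of such bijections via the probabilistic method: for uniformly random independent bijections, any non-edge $\{u,v\}$ of the link graph of $x \in X$ lying in $Z_1 \times Z_2$ or $Z_2 \times Z_3$ contributes only $1/(k-1)$ to the expected non-degree of $x$ in $\Gamma$, so the hypothesis $\overline\deg(x, Z) \le \rho \binom{|Z|}{2}$ yields $\mathbb{E}[(k-1) - \deg_\Gamma(x)] = O(\rho)\cdot k$; an analogous bound on the $[k-1]$-side follows from $\overline\deg(v, XZ) \le \rho|X||Z|$ for $v \in Z$. A concentration inequality for functions of random permutations (e.g., McDiarmid's bounded-difference inequality, since a single swap in any $\pi_j$ changes each non-degree by $O(1)$) combined with a union bound over the $2k-1$ vertices of $\Gamma$ yields, for $\rho$ small and $k$ large, a deterministic labelling achieving the uniform degree bound above.

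Second, since $\Gamma$ is bipartite with parts of sizes $k$ and $k-1$ and minimum degree at least $(1-\sqrt\rho)(k-1) \gg k/2$, a standard bipartite Moon--Moser/Ore-type Hamilton-connectivity result guarantees a Hamilton path between any two specified vertices on the larger side. In particular, $\Gamma$ contains a Hamilton path from $x_0$ to $x_1$, which I denote
\[
x_0,\, i_1,\, y_1,\, i_2,\, y_2,\, \ldots,\, y_{k-2},\, i_{k-1},\, x_1.
\]

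Third, by the definition of $\Gamma$, for each $t \in [k-1]$ both $y_{t-1} a_{i_t} b_{i_t} \in \h$ and $b_{i_t} c_{i_t} y_t \in \h$. Unfolding, the sequence
\[
x_0, a_{i_1}, b_{i_1}, c_{i_1}, y_1, a_{i_2}, b_{i_2}, c_{i_2}, y_2, \ldots, y_{k-2}, a_{i_{k-1}}, b_{i_{k-1}}, c_{i_{k-1}}, x_1
\]
is a loose Hamilton path in $\h$ from $x_0$ to $x_1$, as desired. The main obstacle lies in the first step: upgrading the easy averaging bound on expected non-degrees in $\Gamma$ to a bound that holds uniformly for all $2k-1$ vertices simultaneously, which is where the technical concentration-plus-union-bound or an explicit swap-based derandomization argument is needed. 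Steps 2 and 3 are essentially mechanical once Step 1 has been achieved.
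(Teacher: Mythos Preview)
Your overall architecture is exactly the paper's: partition $Z$ into three equal parts, find matchings that organize $Z$ into triples $(a_i,b_i,c_i)$, form the bipartite graph $\Gamma$, invoke a Moon--Moser type result to get a Hamilton path in $\Gamma$ between $x_0$ and $x_1$, and unfold. Steps 2 and 3 are fine and essentially identical to the paper's.

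The gap is in Step 1, precisely in your bounded-difference claim for the $[k-1]$-side of $\Gamma$. For $x\in X$, a single transposition in any $\pi_j$ affects at most two of the triples $(a_i,b_i,c_i)$, so the non-degree of $x$ changes by at most $2$; McDiarmid then gives the concentration you want. But for $i\in[k-1]$, the non-degree of $i$ is the number of $x\in X$ with $xa_ib_i\notin\h$ or $xb_ic_i\notin\h$, and a transposition in $\pi_1$ at position $i$ replaces $a_i$ entirely: this can change $\overline\deg_{\h}(a_ib_i,X)$ by as much as $|X|$. So the Lipschitz constant for the non-degree of $i$ is $\Theta(k)$, not $O(1)$, and your McDiarmid/union-bound step fails on that side. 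In fact, for uniformly random bijections one expects $\Theta(\sqrt\rho\,k)$ indices $i$ for which the pair $(a_i,b_i)$ has codegree to $X$ below $(1-\sqrt\rho)|X|$, so the desired uniform bound simply does not hold with high probability.

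The paper handles exactly this asymmetry: it first passes to the auxiliary graph $G$ on $Z$ consisting of pairs with codegree at least $(1-\sqrt\rho)|X|$ to $X$, shows $G$ has minimum degree $\ge (1-3\sqrt\rho)|Z|/3$ on each bipartition, and then chooses the matchings $M_1,M_2$ \emph{inside} $G$. This forces $\deg_\Gamma(i)\ge(1-2\sqrt\rho)|X|$ for every $i$ by construction. The $X$-side is then obtained not by McDiarmid but by the K\"uhn--Osthus theorem on random perfect matchings in super-regular bipartite graphs, which guarantees that a uniformly random perfect matching in $G[Z_1,Z_2]$ (and in $G[Z_2,Z_3]$) hits a prescribed dense subgraph $F_x$ in about the expected number of edges. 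Your approach can be repaired along similar lines, or by a post-hoc swap correction on the $O(\sqrt\rho\,k)$ bad indices (each swap moving the $X$-side degrees by $O(1)$), but as written the ``analogous'' sentence does not go through.
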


To prove Lemma \ref{lem:finish_3cyc}, we follow the approach\footnote{We proved this lemma by the absorbing method in the previous version of this manuscript.} in the proof of \cite[Lemma 3.4]{CzMo} given by Czygrinow and Molla, who applied a result of K\"uhn and Osthus \cite{KuOs06_pse}.
A bipartite graph $G=(A, B, E)$ with $|A|=|B|=n$ is called $(d, \e)$-\emph{regular} if for any two subsets $A'\subseteq A$, $B'\subseteq B$ with $|A'|, |B'|\ge \e n$,
\[
(1-\e)d\le \frac{e(A', B')}{|A'| |B'|}\le (1+\e)d,
\]
and $G$ is called \emph{$(d, \e)$-super-regular} if in addition $(1-\e)d n \le \deg(v) \le (1+\e) d n$ for every $v\in A\cup B$.

\begin{lemma}\cite[Theorem 1.1]{KuOs06_pse}\label{lem:random}
For all positive constants $d, \nu_0, \eta\le 1$ there is a positive $\e=\e(d, \nu_0, \eta)$ and an integer $N_0$ such that the following holds for all $n\ge N_0$ and all $\nu\ge \nu_0$. Let $G=(A, B, E)$ be a $(d, \e)$-super-regular bipartite graph whose vertex classes both have size $n$ and let $F$ be a subgraph of $G$ with $|F|= \nu |E|$. Choose a perfect matching $M$ uniformly at random in $G$. Then with probability at least $1-e^{-\e n}$ we have
\[
(1-\eta) \nu n \le |M\cap E(F)| \le (1+\eta) \nu n.
\]
\end{lemma}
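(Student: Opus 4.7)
The plan is to prove Lemma \ref{lem:random} in two stages: first compute $\mathbb{E}|M \cap E(F)|$ and verify it equals $\nu n (1+o(1))$, and then prove exponential concentration around this expectation via a switching argument on the space of perfect matchings. Super-regularity will play a central role in both stages, providing enough uniformity so that both the mean and the switching estimates match the heuristic based on a uniformly random permutation in $K_{n,n}$.

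For the expectation, for each edge $e = uv \in E(G)$ one has $\Pr[e \in M] = \mathrm{per}(A_{G-u-v})/\mathrm{per}(A_G)$, where $A_G$ is the bipartite biadjacency matrix. Br\`egman's theorem gives an upper bound on $\mathrm{per}(A_G)$ in terms of the degree sequence, while the Egorychev--Falikman theorem (van der Waerden's conjecture), applied to a rescaled doubly stochastic matrix, yields a matching lower bound. Combined with the degree uniformity guaranteed by $(d,\e)$-super-regularity, these estimates give $\Pr[e \in M] = (1 \pm o_\e(1))/(dn)$ uniformly in $e$. Summing over $E(F)$ then yields $\mathbb{E}|M \cap E(F)| = \nu n(1 + o_\e(1))$.

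For concentration, call $(M, M')$ a \emph{switch pair} if $|M \triangle M'| = 4$, i.e., they differ along an alternating $4$-cycle $a_1 b_1 a_2 b_2$; such a switch changes $|M \cap E(F)|$ by at most $2$. Let $N_k$ count perfect matchings with $|M \cap E(F)| = k$. Double-counting switch pairs between matchings in $N_k$ and those in $N_{k+1}$, one compares, for a typical $M \in N_k$, the number of available swaps that increase $|M \cap E(F)|$ by one with those that decrease it by one. Using super-regularity to count the relevant $4$-cycles, the ratio comes out approximately $(\nu n - k)/k$, so the sequence $(N_k)$ behaves like a hypergeometric distribution sharply peaked at $k = \nu n$. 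Log-concavity of the recursion forces $\log N_k$ to decay like $-c(k-\nu n)^2/n$, and a union bound over $k \notin [(1-\eta)\nu n, (1+\eta)\nu n]$ yields the desired tail bound $e^{-\e n}$.

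The main obstacle lies in making the switching counts sharp enough to yield exponential, rather than merely polynomial, concentration. For a fixed $M \in N_k$ the number of ``$F$-adding'' swaps depends on codegree statistics of $M$ restricted to $F$ and to $E \setminus F$, which super-regularity controls only on average over pairs. One must therefore show that all but an exponentially small fraction of $M \in N_k$ are ``typical'' in the sense that their local swap statistics agree with the global density; the abnormal matchings, where $M$ clusters edges inside or outside $F$, form an exceptional set that must be handled separately and shown to be negligible. Carefully propagating these estimates through the inductive bound on the ratio $N_{k+1}/N_k$ so that all accumulated error terms sum to at most $e^{-\e n}$ is the delicate part. This is also where the hypothesis of super-regularity (rather than mere regularity) becomes essential, since one needs lower bounds on individual degrees, not just density bounds, to guarantee enough available $4$-cycles at every matching $M$.
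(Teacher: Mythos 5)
This lemma is not proved in the paper at all; it is imported verbatim as a black-box citation of Theorem 1.1 from K\"uhn and Osthus \cite{KuOs06_pse}, and is then applied as a tool later in Section 3. There is therefore no internal proof to compare your sketch against, and you should not attempt to reprove it for the purposes of this paper.

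Evaluating the sketch on its own merits, it has two genuine gaps. For the expectation step, Br\`egman's bound and the Egorychev--Falikman bound control $\mathrm{per}(A_G)$ and $\mathrm{per}(A_{G-u-v})$ separately, each with multiplicative slack (already polynomial in $n$ in the exactly regular case, and substantially worse once degrees are only pinned to $[(1-\e)dn,(1+\e)dn]$). These slack factors do not cancel when you take the ratio $\mathrm{per}(A_{G-u-v})/\mathrm{per}(A_G)$, so you cannot deduce $\Pr[e\in M]=(1\pm o_\e(1))/(dn)$ uniformly in $e$ from these two bounds alone; what is actually needed is an edge-to-edge comparison of $\mathrm{per}(A_{G-u-v})$ across different edges, which is a finer statement. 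For the concentration step, you have correctly identified the crux and then left it open. The switching recurrence needs the ratio $N_{k+1}/N_k$ to be controlled to within $1+o(1)$, which requires near-uniform control of the available ``$F$-adding'' and ``$F$-removing'' swaps over \emph{all} $M\in N_k$; super-regularity only gives this on average, and the ``atypical'' matchings you propose to excise are exactly the tail events you are trying to show are exponentially rare, so the argument as written is circular. As it stands, this is a plausible outline of a route to the theorem rather than a proof of it.
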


\medskip
\begin{proof}[Proof of Lemma \ref{lem:finish_3cyc}]
Let $\e = \e(1, 7/8, 1/8)$ be given by Lemma \ref{lem:random} and $\rho=(\e/2)^4$.
Suppose that $n$ is sufficiently large and $\h$ is a 3-graph satisfying the assumption of the lemma.
Let $G$ be the graph of all pairs $uv$ in $Z$ such that $\deg(uv, X)\ge (1-\sqrt{\rho})|X|$. We claim that for any vertex $v\in Z$,
\begin{equation}\label{eq:dGY1}
\overline{\deg}_G(v)\le \sqrt{\rho}|Z|.
\end{equation}
Otherwise, some vertex $v\in Z$ satisfies $\overline \deg_G (v)> \sqrt{\rho}|Z|$. As each $u\notin N_G(v)$ satisfies $\overline{\deg}_{\h}(uv, X)> \sqrt{\rho}|X|$, we have
\[
\overline \deg_{\h}(v, XZ)> \sqrt{\rho}|Z| \cdot \sqrt{\rho} |X| = {\rho}|Z||X|,
\]
contradicting our assumption.

Let $m=|X|-1$.
Arbitrarily partition $Z$ into three sets $Z_1, Z_2, Z_3$, each of order $m$. By \eqref{eq:dGY1} and $|Z|= 3m$, we have
$\delta(G[Z_1, Z_2]), \delta(G[Z_2, Z_3])\ge (1-3\sqrt\rho)m$. It is easy to see that both $G[Z_1, Z_2]$ and $G[Z_2, Z_3]$ are $(1, \e)$-super-regular as $\e= 2\sqrt[4]\rho$.
For any $x\in X$, let $F_x^1:= \{ z z'\in E(G[Z_1, Z_2]): x z z'\in E(\h) \}$ and let $F_x^2:= \{ z z'\in E(G[Z_2, Z_3]): x z z'\in E(\h) \}$.
Since $\overline \deg(x, Z)\le \rho\binom{|Z|}2\le 5\rho m^2$, we have $|F_x^1|, |F_x^2|\ge (1-3\sqrt\rho) m^2 - 5\rho m^2 \ge \frac{7}{8} m^2$.
Let $M_1$ and $M_2$ be perfect matchings chosen uniformly at random from $G[Z_1, Z_2]$ and $G[Z_2, Z_3]$, respectively.
By applying Lemma \ref{lem:random} with $\nu_0 =7/8$ and $\eta=1/8$, for any $x\in X$, with probability at least $1-e^{-\e m}$, we have
\begin{equation}\label{eq:m12}
|M_1\cap E(F_x^1)|, |M_2\cap E(F_x^2)| \ge (1-\eta) \nu_0 m \ge \frac{49}{64} m.
\end{equation}
Thus there exist a matching $M_1$ in $G[Z_1, Z_2]$ and a matching $M_2$ in $G[Z_2, Z_3]$ such that \eqref{eq:m12} holds for all $x\in X$.
Label $Z_1=\{a_1,\dots, a_m\}$, $Z_2=\{b_1,\dots, b_{m}\}$ and $Z_3=\{c_1,\dots, c_{m}\}$ such that $M_1=\{a_1 b_1, \dots, a_{m} b_{m}\}$ and $M_2=\{b_1 c_1, \dots, b_{m} c_{m}\}$.
Let $\Gamma$ be a bipartite graph on $(X, [m])$ such that $x i\in E(\Gamma)$ if and only if $x a_i b_i, x b_i c_i\in E(\h)$ for $x\in X$ and $i\in [m]$. For every $i\in [m]$, since $a_i b_i, b_i c_i\in E(G)$, we have $\deg_\Gamma(i) \ge (1-2\sqrt\rho)|X|$ by the definition of $G$. On the other hand, by \eqref{eq:m12}, we have $\deg_\Gamma(x) \ge (1- 2(1- \frac{49}{64})) m = \frac{34}{64} m$ for any $x\in X$. By a result of Moon and Moser \cite{MoMo}, $\Gamma$ contains a Hamilton path $x_1 j_1\, x_2\, j_2\, \dots x_m\, j_m\, x_0$, where $[m]=\{j_1,\dots, j_m\}$ and $X=\{x_0, x_1,\dots, x_m\}$. 
Since for each $i\in [m]$, $x_i j_i, x_{i+1} j_i\in E(\Gamma)$ implies that $x_i a_{j_i} b_{j_i}, b_{j_i} c_{j_i} x_{i+1}\in E(\h)$ (with $x_{m+1}=x_0$), we get a loose Hamilton path of $\h$:
\[
x_1\,a_{j_1}\,b_{j_1}\,c_{j_1}\,x_2\,a_{j_2}\,b_{j_2}\,c_{j_2}\cdots x_{m}\, a_{j_m}\, b_{j_m}\, c_{j_m} x_0. \qedhere
\]
\end{proof}

\section{Concluding remarks}
\label{sec:CR}

Let $h^l_d(k, n)$ denote the minimum integer $m$ such that every $k$-uniform hypergraph $\h$ on $n$ vertices with minimum $d$-degree $\delta_d(\h) \ge m$ contains a Hamilton $l$-cycle (provided that $k- l$ divides $n$). In this paper we determine $h^1_1(3, n)$ for sufficiently large $n$. Can we apply the same approach to find other values $h^l_d(k, n)$? In the forthcoming paper \cite{HZ2}, we determine $h^l_{k-1}(k, n)$ for all $l<k/2$, improving the asymptotic results in \cite{KO,HS,KKMO}.

The authors of \cite{BHS} conjectured that $h^1_1(k, n)$ is asymptotically attained by a similar construction as the one supported $h_1^1(3,n)$. At present we cannot verify this conjecture because it seems that our success on $h^1_1(3, n)$ comes from the relation $d= k-2$ instead of the assumption $d=1$.

It was conjectured in \cite{RR} that $h^{k-1}_d(k, n)$ approximately equals to the minimum $d$-degree threshold for perfect matchings in $k$-graphs, in particular, $h^{k-1}_1 (k, n)= (1 - (1- 1/k)^{k-1} + o(1)) \binom{n}{k-1}$. This conjecture seems very hard because we do not even know the minimum $d$-degree threshold for perfect matchings in general.

The key lemma in our proof, Lemma~\ref{lem:Y}, shows that every 3-graph $\h$ on $n$ vertices with $\delta_1(\h)\ge  (7/16 - o(1)) \binom{n}{2}$ either contains an almost perfect $\Y$-tiling or is in the extremal case. Naturally this raises a question: what is the minimum vertex degree threshold for a perfect $\Y$-tiling? The corresponding codegree threshold was determined in \cite{KO} (asymptotically) and \cite{CDN} (exactly). We determine this minimum vertex degree threshold exactly in \cite{HZ3}
(Czygrinow \cite{Czy14} independently proved a similar result).

\section*{Acknowledgement}
We thank two referees for their valuable comments that improved the presentation of this paper.
\bibliographystyle{plain}
\bibliography{Jan2014}

\end{document}